\documentclass[11pt,a4paper]{amsart}
\usepackage{amssymb,amsthm}

\usepackage[truedimen,margin=30truemm]{geometry}

\usepackage[pdftex,colorlinks=true,bookmarks=true,citecolor=blue,linkcolor=blue,pdfauthor={},pdftitle={}]{hyperref}

\theoremstyle{plain}
\newtheorem{theorem}{Theorem}[section]
\newtheorem{lemma}[theorem]{Lemma}

\theoremstyle{definition}

\newtheorem{assumption}[theorem]{Assumption}

\theoremstyle{remark}
\newtheorem{remark}[theorem]{Remark}

\numberwithin{equation}{section}

\begin{document}

\title[Two-jet Kolmogorov type flow on the unit sphere]{Linear stability and enhanced dissipation for the two-jet Kolmogorov type flow on the unit sphere}

\author[T.-H. Miura]{Tatsu-Hiko Miura}
\address{Department of Mathematics, Kyoto University, Kitashirakawa Oiwake-cho, Sakyo-ku, Kyoto 606-8502, Japan}
\email{t.miura@math.kyoto-u.ac.jp}

\subjclass[2010]{35B35, 35Q30, 35R01, 76D05}

\keywords{Navier--Stokes equations, Kolmogorov type flow, enhanced dissipation}

\begin{abstract}
  We consider the Navier--Stokes equations on the two-dimensional unit sphere and study the linear stability of the two-jet Kolmogorov type flow which is a stationary solution given by the zonal spherical harmonic function of degree two. We prove the linear stability of the two-jet Kolmogorov type flow for an arbitrary viscosity coefficient by showing the exponential decay of a solution to the linearized equation towards an equilibrium which grows as the viscosity coefficient tends to zero. The main result of this paper is the nonexistence of nonzero eigenvalues of the perturbation operator appearing in the linearized equation. By making use of the mixing property of the perturbation operator which is expressed by a recurrence relation for the spherical harmonics, we show that the perturbation operator does not have not only nonreal but also nonzero real eigenvalues. As an application of this result, we get the enhanced dissipation for the two-jet Kolmogorov type flow in the sense that a solution to the linearized equation rescaled in time decays arbitrarily fast as the viscosity coefficient tends to zero.
\end{abstract}

\maketitle

\section{Introduction} \label{S:Intro}

\subsection{Problem settings and main results} \label{SS:In_Pr}
Let $S^2$ be the unit sphere in $\mathbb{R}^3$. We consider the incompressible Navier--Stokes equations
\begin{align} \label{E:NS_Intro}
  \partial_t\mathbf{u}+\nabla_{\mathbf{u}}\mathbf{u}-\nu(\Delta_H\mathbf{u}+2\mathbf{u})+\nabla p = \mathbf{f}, \quad \mathrm{div}\,\mathbf{u} = 0 \quad\text{on}\quad S^2\times(0,\infty).
\end{align}
Here $\mathbf{u}$ is the velocity of a fluid, which is a tangential vector field on $S^2$, $p$ is the pressure, and $\mathbf{f}$ is a given external force. Also, $\nu>0$ is the viscosity coefficient, $\nabla_{\mathbf{u}}\mathbf{u}$ is the covariant derivative of $\mathbf{u}$ along itself, $\Delta_H$ is the Hodge Laplacian via identification of vector fields with one-forms, $\nabla$ is the gradient on $S^2$, and $\mathrm{div}$ is the divergence on $S^2$. Note that here we take the viscous term as twice of the divergence of the deformation tensor $\mathrm{Def}\,\mathbf{u}$:
\begin{align*}
  2\,\mathrm{div}\,\mathrm{Def}\,\mathbf{u} = \Delta_H\mathbf{u}+\nabla(\mathrm{div}\,\mathbf{u})+2\,\mathrm{Ric}(\mathbf{u}) = \Delta_H\mathbf{u}+\nabla(\mathrm{div}\,\mathbf{u})+2\mathbf{u},
\end{align*}
where $\mathrm{Ric}\equiv1$ is the Ricci curvature of $S^2$. We refer to \cite{EbiMar70,Aris89,DuMiMi06,Tay11_3,ChCzDi17} for this identity and the choice of the viscous term in the Navier--Stokes equations on manifolds.

Since $S^2$ is simply connected, the system \eqref{E:NS_Intro} is equivalent to the vorticity equation
\begin{align} \label{E:Vo_Intro}
  \partial_t\omega+\nabla_{\mathbf{u}}\omega-\nu(\Delta\omega+2\omega) = \mathrm{rot}\,\mathbf{f}, \quad \mathbf{u} = \mathbf{n}_{S^2}\times\nabla\Delta^{-1}\omega \quad\text{on}\quad S^2\times(0,\infty)
\end{align}
for the scalar vorticity $\omega=\mathrm{rot}\,\mathbf{u}$. Here $\nabla_{\mathbf{u}}\omega$ is the directional derivative of $\omega$ along $\mathbf{u}$ and $\Delta$ is the Laplace--Beltrami operator on $S^2$ which has the inverse $\Delta^{-1}$ in the space of $L^2$ functions on $S^2$ with zero mean. Also, $\mathbf{n}_{S^2}$ is the unit outward normal vector field of $S^2$ and $\times$ is the vector product in $\mathbb{R}^3$. We give derivation of \eqref{E:Vo_Intro} in Section \ref{S:DeVo} for the readers' convenience.

For $n\in\mathbb{N}$ and $a\in\mathbb{R}$ the vorticity equation \eqref{E:Vo_Intro} with external force $\mathrm{rot}\,\mathbf{f}_n^a=a\nu(\lambda_n-2)Y_n^0$ has a stationary solution with velocity field
\begin{align} \label{E:Zna_Intro}
  \omega_n^a(\theta,\varphi) = aY_n^0(\theta), \quad \mathbf{u}_n^a(\theta,\varphi) = -\frac{a}{\lambda_n\sin\theta}\frac{dY_n^0}{d\theta}(\theta)\partial_\varphi\mathbf{x}(\theta,\varphi).
\end{align}
Here $\theta$ and $\varphi$ are the colatitude and longitude so that $S^2$ is parametrized by
\begin{align} \label{E:SpCo_Intro}
  \mathbf{x}(\theta,\varphi) = (\sin\theta\cos\varphi,\sin\theta\sin\varphi,\cos\theta), \quad \theta\in[0,\pi], \, \varphi\in[0,2\pi).
\end{align}
Also, $\lambda_n=n(n+1)$ is the eigenvalue of $-\Delta$ and $Y_n^0$ is a corresponding eigenfunction (i.e. a spherical harmonic function) of longitudinal wavenumber zero given by
\begin{align*}
  Y_n^0(\theta) = \sqrt{\frac{2n+1}{4\pi}}P_n(\cos\theta), \quad P_n(s) = \frac{1}{2^nn!}\frac{d^n}{ds^n}(s^2-1)^n.
\end{align*}
The flow \eqref{E:Zna_Intro} is called the generalized Kolmogorov flow in \cite{Ili04} since it can be seen as a spherical version of the Kolmogorov flow in a two-dimensional (2D) flat torus. It is also called an $n$-jet zonal flow in \cite{SaTaYa13,SaTaYa15}. To emphasize both the similarity to the plane Kolmogorov flow and the number of jets, we call \eqref{E:Zna_Intro} the $n$-jet Kolmogorov type flow.

When $n=1$, it is shown in \cite{SaTaYa13} that the flow \eqref{E:Zna_Intro} is linearly stable for all $\nu>0$. In fact, the linearized equation for \eqref{E:Vo_Intro} around $\omega_1^a$ is of the form
\begin{align} \label{E:Li1_Eq}
  \partial_t\tilde{\omega}_1 = \nu(\Delta\tilde{\omega}_1+2\tilde{\omega}_1)-a_1\partial_\varphi(I+2\Delta^{-1})\tilde{\omega}_1, \quad a_1 = \frac{a}{4}\sqrt{\frac{3}{\pi}},
\end{align}
where $I$ is the identity operator (see Section \ref{S:DeVo} for derivation of the linearized equation), and the solution $\tilde{\omega}_1$ is expressed by the spherical harmonics $Y_n^m$ (see Section \ref{S:Pre}) as
\begin{align} \label{E:Li1_Sol}
  \begin{aligned}
    \tilde{\omega}_1(t) &= \sum_{n=1}^\infty\sum_{m=-n}^ne^{-\sigma_{n,m}t}(\tilde{\omega}_1(0),Y_n^m)_{L^2(S^2)}Y_n^m, \\
    \sigma_{n,m} &= \nu(\lambda_n-2)+ia_1m\left(1-\frac{2}{\lambda_n}\right).
  \end{aligned}
\end{align}
This formula gives the linear stability of $\omega_1^a$.

In this paper we are concerned with the linear stability of the two-jet Kolmogorov type flow. We substitute $\omega=\omega_2^a+\tilde{\omega}_2$ for \eqref{E:Vo_Intro} and omit the nonlinear term with respect to $\tilde{\omega}_2$ to get (see Section \ref{S:DeVo} for details)
\begin{align} \label{E:Li2_til}
  \partial_t\tilde{\omega}_2 = \nu(\Delta\tilde{\omega}_2+2\tilde{\omega}_2)-a_2\cos\theta\,\partial_\varphi(I+6\Delta^{-1})\tilde{\omega}_2, \quad a_2 = \frac{a}{4}\sqrt{\frac{5}{\pi}}.
\end{align}
Replacing $\tilde{\omega}_2$ and $a_2$ by $\omega$ and $a$, we rewrite \eqref{E:Li2_til} as
\begin{align} \label{E:Li2_Eq}
  \partial_t\omega = \mathcal{L}^{\nu,a}\omega = \nu A\omega-ia\Lambda\omega, \quad A = \Delta+2, \quad \Lambda = -i\cos\theta\,\partial_\varphi(I+6\Delta^{-1}).
\end{align}
We consider \eqref{E:Li2_Eq} in $L_0^2(S^2)$, the space of $L^2$ functions on $S^2$ with zero mean. Then $-A$ is a nonnegative self-adjoint operator on $L_0^2(S^2)$ and $\Lambda$ is an $A$-compact operator on $L_0^2(S^2)$, so $\mathcal{L}^{\nu,a}$ generates an analytic semigroup $\{e^{t\mathcal{L}^{\nu,a}}\}_{t\geq0}$ in $L_0^2(S^2)$ by a perturbation theory of semigroups (see \cite{EngNag00}). Thus the solution of \eqref{E:Li2_Eq} with initial data $\omega_0\in L_0^2(S^2)$ is given by $\omega(t)=e^{t\mathcal{L}^{\nu,a}}\omega_0$. We obtain the linear stability of the two-jet Kolmogorov type flow as follows.

\begin{theorem} \label{T:Li2_LS}
  There exists a constant $C>0$ such that
  \begin{align*}
    \|e^{t\mathcal{L}^{\nu,a}}\omega_0-\Omega_0^{\nu,a}\|_{L^2(S^2)} \leq C\left(1+\frac{|a|}{\nu}\right)e^{-4\nu t}\|\omega_0\|_{L^2(S^2)}
  \end{align*}
  for all $t\geq0$, $\nu>0$, $a\in\mathbb{R}$, and $\omega_0\in L_0^2(S^2)$, where
  \begin{align} \label{E:Li2S_Om}
    \Omega_0^{\nu,a} &= (\omega_0,Y_1^0)_{L^2(S^2)}Y_1^0+\sum_{m=\pm1}(\omega_0,Y_1^m)_{L^2(S^2)}\left(Y_1^m+\frac{a}{\nu}\frac{im}{2\sqrt{5}}Y_2^m\right).
  \end{align}
\end{theorem}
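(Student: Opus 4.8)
The plan is to pass to the coefficients of $\omega(t)=e^{t\mathcal{L}^{\nu,a}}\omega_0$ in the spherical harmonic basis and exploit the resulting recurrence (Jacobi) structure, which for this flow degenerates very favorably at degree two. Writing $b_n^m(t)=(\omega(t),Y_n^m)_{L^2(S^2)}$ and using $\partial_\varphi Y_n^m=imY_n^m$, the recurrence $\cos\theta\,Y_n^m=A_n^mY_{n+1}^m+A_{n-1}^mY_{n-1}^m$, and the identities $AY_n^m=(2-\lambda_n)Y_n^m$ and $(I+6\Delta^{-1})Y_n^m=(1-6/\lambda_n)Y_n^m$, the operator $\mathcal{L}^{\nu,a}$ leaves each wavenumber sector invariant and acts on $(b_n^m)_n$ as the tridiagonal system
\[
\dot b_n^m=\nu(2-\lambda_n)b_n^m-iam\Big[A_n^m\big(1-\tfrac{6}{\lambda_{n+1}}\big)b_{n+1}^m+A_{n-1}^m\big(1-\tfrac{6}{\lambda_{n-1}}\big)b_{n-1}^m\Big].
\]
The whole argument rests on two arithmetic coincidences: $\lambda_1=2$ kills the diffusion on degree one, and $\lambda_2=6$ makes $1-6/\lambda_2$ vanish on degree two.

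First I would extract the equilibrium. The identities $AY_1^m=0$ and $1-6/\lambda_2=0$ give $\dot b_1^m\equiv0$ for $m\in\{-1,0,1\}$, so the degree-one coefficients are conserved; this is exactly what pins down the coefficients $(\omega_0,Y_1^m)$ in $\Omega_0^{\nu,a}$. A direct check with the same two identities shows $\mathcal{L}^{\nu,a}Y_1^0=0$ and $\mathcal{L}^{\nu,a}\big(Y_1^m+\tfrac{a}{\nu}\tfrac{im}{2\sqrt5}Y_2^m\big)=0$, so $\Omega_0^{\nu,a}\in\ker\mathcal{L}^{\nu,a}$ and $e^{t\mathcal{L}^{\nu,a}}\Omega_0^{\nu,a}=\Omega_0^{\nu,a}$. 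Hence $v(t):=e^{t\mathcal{L}^{\nu,a}}\omega_0-\Omega_0^{\nu,a}$ again solves $\dot v=\mathcal{L}^{\nu,a}v$, and since $(\Omega_0^{\nu,a},Y_1^m)=(\omega_0,Y_1^m)$ the conserved quantities of $v$ vanish, so the degree-one coefficients of $v$ are zero for all time and $v(t)$ carries only degree $\geq2$ content.

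The heart of the matter is the decay of $v$ at rate $4\nu$, where $1-6/\lambda_2=0$ is used a second time: it removes the coupling of $b_2^m$ into both $\dot b_3^m$ and $\dot b_1^m$, so the tail $(b_n^m)_{n\geq3}$ is a closed subsystem that drives $b_2^m$ but receives nothing from it. On this tail $1-6/\lambda_n\in(0,1)$, so I symmetrize by the weights $d_n=\sqrt{1-6/\lambda_n}$ (i.e.\ conjugate by $(I+6\Delta^{-1})^{1/2}$); in the resulting equivalent inner product the advection part is skew-adjoint and only the diagonal diffusion $\nu(2-\lambda_n)\leq-10\nu$ contributes to the real part, giving tail decay $Ce^{-10\nu t}\|\omega_0\|$ uniformly in $a,\nu$. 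The single borderline mode $b_2^m$, with diffusion exactly $-4\nu$ and slaved to $b_3^m$ through a coefficient of size $O(|a|)$, is then closed by Duhamel's formula; the gap between $10\nu$ and $4\nu$ makes the convolution integrable and produces $|b_2^m(t)|\leq C(1+|a|/\nu)e^{-4\nu t}\|\omega_0\|$. The sectors $m=0$ (where $\Lambda=0$ and the estimate is explicit) and $|m|\geq3$ (where $1-6/\lambda_n>0$ throughout, so the symmetrization applies directly and the rate is $10\nu$) are easier. Collecting sectors, the dominant $b_2^m$ contribution yields the stated bound.

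The main obstacle is precisely this degree-two mode. Because $\lambda_2=6$ forces both $A+4$ and $I+6\Delta^{-1}$ to vanish there, the advection cannot be absorbed by any $L^2$- or symmetrized-energy estimate, and the diffusion offers only the target rate $4\nu$ with no spare gap, so a naive energy inequality gives spurious growth like $e^{C|a|t}$. The resolution is that the same vanishing decouples $b_2^m$ as a \emph{slaved} mode fed by the faster-decaying tail, converting the non-self-adjoint obstruction into a triangular structure that the Duhamel step closes, at the cost of the factor $1+|a|/\nu$. Notably, this argument is elementary and does not invoke the paper's main nonexistence-of-eigenvalues theorem.
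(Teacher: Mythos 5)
Your proposal is correct and follows essentially the same route as the paper's own proof (Theorem \ref{T:Li2S_Pre}): the kernel identification via $\lambda_1=2$ and $\lambda_2=6$, the conserved and vanishing degree-one modes of $v$, the closed tail subsystem for degrees $\geq 3$ decaying at rate $10\nu$ via a $B$-weighted energy estimate, and Duhamel for the slaved $Y_2^m$ modes yielding the factor $1+|a|/\nu$ from the $6\nu$ gap and the $|a|/\nu$ term in $v(0)$. Your conjugation by $(I+6\Delta^{-1})^{1/2}$ is merely a rephrasing of the paper's device of pairing the equation with $B\tilde{\omega}$ and using the symmetry of $-i\partial_\varphi M_{\cos\theta}$.
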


We present a more precise result in Theorem \ref{T:Li2S_Pre}. The proof is based on the observation that $\tilde{\omega}(t)=e^{t\mathcal{L}^{\nu,a}}\omega_0-\Omega_0^{\nu,a}$ also satisfies \eqref{E:Li2_Eq}. Using the expressions of $A$ and $\Lambda$ by $Y_n^m$ and noting that $(I+6\Delta^{-1})Y_2^m=0$, we take the $L^2(S^2)$-inner product of \eqref{E:Li2_Eq} for $\tilde{\omega}(t)$ with $Y_1^m$, $(I+6\Delta^{-1})\tilde{\omega}(t)$, and then $Y_2^m$ to get estimates for $\tilde{\omega}(t)$.

In Theorem \ref{T:Li2S_Pre} we also find that $e^{t\mathcal{L}^{\nu,a}}\omega_0-\Omega_0^{\nu,a}$ is orthogonal to $Y_1^0$ and $Y_1^{\pm1}$ in $L^2(S^2)$ for all $t\geq0$.
Moreover, $\Omega_0^{\nu,a}=0$ when $(\omega_0,Y_1^m)_{L^2(S^2)}=0$ for $m=0,\pm1$.
By these facts and Theorem \ref{T:Li2_LS}, we also have the following result shown in \cite{SaTaYa13}: the two-jet Kolmogorov type flow is globally asymptotically stable for all $\nu>0$ in the orthogonal complement of $\mathrm{span}\{Y_1^0,Y_1^{\pm1}\}$ in $L_0^2(S^2)$.

\begin{remark} \label{R:Li2_LS}
  The mapping $\omega_0\mapsto\Omega_0^{\nu,a}$ is a projection (but not the orthogonal one) from $L_0^2(S^2)$ onto the kernel of $\mathcal{L}^{\nu,a}$ in $L_0^2(S^2)$. We emphasize that the $Y_2^{\pm1}$-components remain in the equilibrium \eqref{E:Li2S_Om}, although $Y_2^{\pm1}$ themselves dissipate under the flow generated by $\mathcal{L}^{\nu,a}$. This is because the viscosity does not work for $Y_1^{\pm1}$, i.e. $AY_1^{\pm1}=0$ and $\Lambda Y_1^{\pm1}$ have a contribution to the $Y_2^{\pm1}$-components. Also, if $\omega_0$ is real-valued, then
  \begin{align*}
    \Omega_0^{\nu,a} = (\omega_0,Y_1^0)_{L^2(S^2)}Y_1^0+2\mathrm{Re}\Bigl((\omega_0,Y_1^1)_{L^2(S^2)}Y_1^1\Bigr)-\frac{a}{\nu}\frac{1}{\sqrt{5}}\mathrm{Im}\Bigl((\omega_0,Y_1^1)_{L^2(S^2)}Y_2^1\Bigr)
  \end{align*}
  by $Y_n^{-m}=(-1)^m\overline{Y_n^m}$ (see Section \ref{S:Pre}). Hence $\Omega_0^{\nu,a}$ may grow as $\nu\to0$ even if we only consider real-valued solutions to \eqref{E:Li2_Eq}.
\end{remark}

As mentioned above, when $(\omega_0,Y_1^m)_{L^2(S^2)}=0$ for $m=0,\pm1$, we have $\Omega_0^{\nu,a}=0$ and thus $e^{t\mathcal{L}^{\nu,a}}\omega_0$ decays at the rate $O(e^{-\nu t})$ by Theorem \ref{T:Li2_LS}.
In the case of the plane Kolmogorov flow \cite{BecWay13,LinXu19,IbMaMa19,WeiZha19,WeZhZh20}, however, it is shown that a solution to the linearized equation decays at a rate faster than $O(e^{-\nu t})$ when $\nu$ is sufficiently small.
Such a phenomenon is called the enhanced dissipation and also observed in the study of an advection-diffusion equation \cite{CoKiRyZl08,Zla10,Wei21}.
Our next aim is to study the enhanced dissipation for the solution $e^{t\mathcal{L}^{\nu,a}}\omega_0$ to \eqref{E:Li2_Eq} when $\omega_0$ is orthogonal to $Y_1^0$ and $Y_1^{\pm1}$ in $L^2(S^2)$.

To carry out a more detailed analysis of the behavior of $e^{t\mathcal{L}^{\nu,a}}\omega_0$ as $\nu\to0$ (or $t\to\infty$), it is important to study the spectral properties of $\Lambda$. In the study of the linear stability of the Kolmogorov type flows for the Euler equations on $S^2$, Taylor \cite{Tay16} showed that the spectrum of $\Lambda$ lies on the real axis and in particular $\Lambda$ does not have nonreal eigenvalues. The next theorem is the main result of this paper in which we further show that $\Lambda$ does not have real eigenvalues except for zero.

\begin{theorem} \label{T:NoEi_Lam}
  The operator $\Lambda$ in $L_0^2(S^2)$ has no eigenvalues in $\mathbb{C}\setminus\{0\}$.
\end{theorem}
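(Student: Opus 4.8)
The plan is to diagonalize the longitudinal variable and reduce Theorem \ref{T:NoEi_Lam} to a statement about a scalar three-term recurrence. Since $\cos\theta$ and $\Delta$ are independent of $\varphi$, the operator $\Lambda$ in \eqref{E:Li2_Eq} commutes with $\partial_\varphi$ and thus leaves each closed subspace $H_m=\overline{\mathrm{span}}\{Y_n^m:n\geq\max(|m|,1)\}$ invariant. For $m=0$ we have $\Lambda=0$, so a nonzero eigenvalue must be attained on some $H_m$ with $m\neq0$, and by $Y_n^{-m}=(-1)^m\overline{Y_n^m}$ we may assume $m\geq1$. Using $\partial_\varphi Y_n^m=imY_n^m$, $(I+6\Delta^{-1})Y_n^m=\mu_nY_n^m$ with $\mu_n=\frac{(n-2)(n+3)}{n(n+1)}$, and the recurrence $\cos\theta\,Y_n^m=\alpha_n^mY_{n+1}^m+\alpha_{n-1}^mY_{n-1}^m$ with $\alpha_n^m=\sqrt{\frac{(n+1)^2-m^2}{(2n+1)(2n+3)}}>0$ for $n\geq|m|$, one computes
\begin{align*}
  \Lambda Y_n^m = m\mu_n\bigl(\alpha_n^mY_{n+1}^m+\alpha_{n-1}^mY_{n-1}^m\bigr).
\end{align*}
Writing $f=\sum_nc_nY_n^m\in H_m$ and $\Lambda f=\beta f$ with $\beta\in\mathbb{C}\setminus\{0\}$, the eigenvalue problem becomes
\begin{align*}
  m\mu_{n-1}\alpha_{n-1}^mc_{n-1}+m\mu_{n+1}\alpha_n^mc_{n+1}=\beta c_n,
\end{align*}
and I must show this recurrence has no nonzero square-summable solution $(c_n)$.

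First I would record the low-index structure. Since $\mu_2=0$ we have $Y_2^m\in\ker\Lambda$, and when $m=1$ the $n=1$ equation reads $\beta c_1=0$ because both neighboring coefficients vanish ($\alpha_0^1=0$ and $\mu_2=0$); hence $c_1=0$ whenever $\beta\neq0$. After this reduction the eigenvector is supported on indices $n$ with $\mu_n\geq0$, and a direct inspection of the lowest equation shows that no nonzero solution can have finite support once $\beta\neq0$.

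To exclude nonreal eigenvalues I would exploit the factorization $\Lambda=DB$ on $H_m$, where $B=I+6\Delta^{-1}$ and $D=-i\cos\theta\,\partial_\varphi$ are both bounded and self-adjoint there (indeed $D$ is multiplication by the real function $m\cos\theta$). Setting $g=Bf$ gives $Dg=\beta f$, hence $(Dg,g)_{L^2(S^2)}=\beta(f,Bf)_{L^2(S^2)}$; the left-hand side is real by self-adjointness of $D$, and $(f,Bf)_{L^2(S^2)}=\sum_n\mu_n|c_n|^2$ is real, so $\mathrm{Im}\,\beta\cdot(f,Bf)_{L^2(S^2)}=0$. By the reduction above this sum is nonnegative, with equality only when $f$ is a multiple of $Y_2^m\in\ker\Lambda$, so a nonreal $\beta$ would force $f\in\ker\Lambda$ and thus $\beta=0$, a contradiction; this recovers the nonreal part of Taylor's result \cite{Tay16}.

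It remains to rule out real $\beta\neq0$. Here I would first bound $\|\Lambda|_{H_m}\|\leq\|D\|\,\|B\|=|m|\sup_n|\mu_n|$; after enforcing $c_1=0$ when $m=1$ one has $\sup_{n\geq2}|\mu_n|=1$, so the spectrum is confined to $|\beta|\leq|m|$, and there are no eigenvalues with $|\beta|>|m|$. The remaining, and hardest, task is to exclude real eigenvalues embedded in the band $[-|m|,|m|]$. For this I would analyze the recurrence asymptotically: since $\mu_n=1+O(n^{-2})$ and $\alpha_n^m=\tfrac12+O(n^{-2})$, the coefficient deviations are summable, and a Poincaré--Perron/Levinson-type analysis shows that for real $\beta$ with $0<|\beta|<|m|$ the limiting characteristic roots lie on the unit circle, so every solution is asymptotically oscillatory and non-decaying, hence not square-summable; the band edges $|\beta|=|m|$, where the double root produces solutions growing like a constant and like $n$, are treated in the same spirit. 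I expect this Levinson-type step to be the main obstacle, as it is precisely the oscillatory, non-exponential regime that must be controlled finely enough to contradict square-summability, with the $O(n^{-2})$ (hence summable) corrections of $\mu_n$ and $\alpha_n^m$ furnishing the crucial quantitative input.
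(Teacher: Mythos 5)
Your mode decomposition, the elimination of $c_1$ when $m=1$, and the exclusion of nonreal eigenvalues via the reality of $(M_{\cos\theta}Bf,Bf)_{L^2(S^2)}$ coincide with the paper's proof of Theorem \ref{T:NoEi_Pm}. The genuine divergence is in the hardest case $0<|\beta|<|m|$: the paper never analyzes the recurrence asymptotically; instead it uses \eqref{E:Y_Rec} to rewrite the eigenvalue equation in the form $(\mu-\cos\theta)\bigl(w_{<N}+Bu_{\geq N}\bigr)=\sigma_N^mY_{|m|}^m+6\mu\Delta^{-1}u_{\geq N}$, evaluates at $\cos\theta=\mu$ to bound $|\sigma_N^m|$ by $\|(-\Delta)^{-1/2}u_{\geq N}\|_{L^2(S^2)}$ (Lemma \ref{L:Linf}), and then combines the Hardy inequality of Lemma \ref{L:Hardy} with orthogonality to force the tail $u_{\geq N}=0$ for large $N$, finishing by backward induction. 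Your plan for the open band---a discrete Levinson (Benzaid--Lutz) analysis, using that the coefficient deviations are $O(n^{-2})$, hence summable, and that the limiting roots $z_\pm=\tilde\mu\pm i\sqrt{1-\tilde\mu^2}$, $\tilde\mu=\beta/m$, are distinct and unimodular so the dichotomy condition holds---is workable: it gives a fundamental system $c_n^{(\pm)}=z_\pm^n\bigl(1+o(1)\bigr)$, a Ces\`aro-averaging argument shows no nontrivial combination is square-summable, and your finite-support observation (together with $\beta c_2=0$ at the end, since $\mu_2=0$ blocks the backward induction at $n=2$) then kills the eigenvector. That is a legitimately different route for the interior of the band, at the cost of importing nontrivial asymptotic theory for difference equations that the paper's argument avoids.

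There is, however, a genuine gap at the band edge $|\beta|=|m|$, and your claim that it can be treated ``in the same spirit'' is not merely unproven but false. Your operator-norm bound only excludes $|\beta|>|m|$, so the edge needs a separate argument, and asymptotics cannot supply it: at $\tilde\mu=1$ both recurrence coefficients satisfy $\mu_{n-1}\alpha_{n-1}^m=\tfrac12\bigl(1-\kappa n^{-2}+O(n^{-3})\bigr)$ and $\mu_{n+1}\alpha_n^m=\tfrac12\bigl(1-\kappa n^{-2}+O(n^{-3})\bigr)$ with the same $\kappa=6+\tfrac12\bigl(m^2-\tfrac14\bigr)>0$, so the equation becomes a discrete Euler-type equation $c_{n+1}-2c_n+c_{n-1}=2\kappa n^{-2}c_n+\cdots$. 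For the double root $z=1$ an $O(n^{-2})$ perturbation is exactly critical: the indicial exponents are the roots of $s(s-1)=2\kappa$, i.e. $s_\pm=\tfrac12\bigl(1\pm\sqrt{1+8\kappa}\bigr)$, and since $2\kappa>12$ one gets $s_-<-\tfrac12$, so the recurrence \emph{does} possess a square-summable branch $c_n\sim n^{s_-}$; the unperturbed basis $\{1,n\}$ you invoke is destroyed. Hence no Levinson-type argument can rule out an edge eigenvalue---what rules it out is a global estimate, namely that the $\ell^2$ branch cannot match the boundary conditions at low $n$. The repair is cheap and is exactly the paper's Case 1: for an eigenvector supported in $\{n\geq2\}$ one has $|\beta|^2\sum_n|c_n|^2=\|m\cos\theta\,Bf\|_{L^2(S^2)}^2\leq m^2\sum_n\mu_n^2|c_n|^2$ with $\mu_n^2<1$ strictly for every $n\geq2$, which forces $c_n=0$ for all $n$ whenever $|\beta|\geq|m|$. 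Substitute this termwise-strict norm argument for your band-edge remark, and supply the Benzaid--Lutz step rigorously in the open band, and your proof closes.
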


We prove Theorem \ref{T:NoEi_Lam} in Section \ref{S:No_Ei}. For the proof, we take an approach making use of the \textit{mixing} structure of $\Lambda$, which seems to have its own interest as the spectral analysis of linear operators; the key idea is outlined in Section \ref{SS:In_Ou} below. A common approach to the proof of the nonexistence of eigenvalues of a linear operator is the one based on the theory of ordinary differential equations (ODEs): one analyzes an ODE associated with the linear operator and applies the uniqueness of a (smooth) solution to the ODE to show that a solution to the eigenvalue problem identically vanishes. Such an ODE approach is used in the analysis of the Burgers vortex \cite{Mae11} and of the plane Kolmogorov flow \cite{LinXu19,IbMaMa19,WeZhZh20}, but it does not work efficiently in our case because of the size of the coefficient of the nonlocal operator $\Delta^{-1}$ in $\Lambda$, which is the crucial difficulty in the proof of Theorem \ref{T:NoEi_Lam}. Instead, to overcome this difficulty, we employ the mixing property of $\Lambda$ expressed by a recurrence relation for the spherical harmonics, which are the basis functions of $L^2(S^2)$. The main novelty of this paper is to give a new and robust approach to analyze the eigenvalue problem for a linear operator with a suitable mixing property.

As an application of Theorem \ref{T:NoEi_Lam}, we show that the enhanced dissipation occurs for the rescaled flow $e^{\frac{t}{\nu}\mathcal{L}^{\nu,a}}\omega_0$, which is a solution to $\partial_t\omega=A\omega-i\alpha\Lambda\omega$ with $\alpha=a/\nu$, as in the case of an advection-diffusion equation \cite{CoKiRyZl08,Zla10,Wei21}.
Let
\begin{align*}
  \mathcal{X} = \{u\in L_0^2(S^2) \mid (u,Y_n^0)_{L^2(S^2)}=(u,Y_1^m)_{L^2(S^2)}=0, \, n\geq1, \, |m|=0,1\},
\end{align*}
which is a closed subspace of $L_0^2(S^2)$ invariant under the actions of $A$ and $\Lambda$ (see Section \ref{S:EnDi}). By $\mathbb{Q}$ we denote the orthogonal projection from $\mathcal{X}$ onto the orthogonal complement of the kernel of $\Lambda$ restricted on $\mathcal{X}$. We have the enhanced dissipation for the rescaled flow in the following sense.

\begin{theorem} \label{T:EnDi_Lna}
  For each $\tau>0$ we have
  \begin{align} \label{E:EnDi_Lna}
    \lim_{|a/\nu|\to\infty}\sup_{t\geq\tau}\|\mathbb{Q}e^{\frac{t}{\nu}\mathcal{L}^{\nu,a}}\|_{\mathcal{X}\to\mathcal{X}} = 0.
  \end{align}
\end{theorem}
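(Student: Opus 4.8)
The plan is to rescale, reduce the left-projected semigroup to a genuine semigroup on $\mathcal{Y}:=\mathbb{Q}\mathcal{X}$, recast it in an energy inner product where its generator splits as a gapped self-adjoint dissipation plus a skew-adjoint advection with empty point spectrum, and then invoke the abstract relaxation-enhancing criterion of Constantin--Kiselev--Ryzhik--Zlatoš \cite{CoKiRyZl08}.

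First I would write $\tfrac1\nu\mathcal{L}^{\nu,a}=A-i\alpha\Lambda=:L_\alpha$ with $\alpha=a/\nu$, so that $e^{\frac t\nu\mathcal{L}^{\nu,a}}=e^{tL_\alpha}$ and $|a/\nu|\to\infty$ becomes $|\alpha|\to\infty$. The space $\mathcal{X}$ splits $L^2$-orthogonally as $\mathcal{X}=\mathcal{Y}\oplus K$, where $K=\ker(\Lambda|_\mathcal{X})=\mathrm{span}\{Y_2^{\pm1},Y_2^{\pm2}\}$: indeed $\Lambda u=0$ in $\mathcal{X}$ forces $(I+6\Delta^{-1})u$ to be $\varphi$-independent, which together with $u\in\mathcal{X}$ leaves only the degree-two modes, for which $(I+6\Delta^{-1})Y_2^m=0$. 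Since $A$ acts diagonally on the $Y_n^m$ it leaves both $K$ (the degree-two eigenspace, on which $A=-4$) and $\mathcal{Y}$ invariant, so $A\mathbb{Q}=\mathbb{Q}A$, while $\Lambda K=\{0\}$. Projecting $\dot w=Aw-i\alpha\Lambda w$ with $\mathbb{Q}$ and using $\mathbb{Q}\Lambda w=\mathbb{Q}\Lambda\,\mathbb{Q}w$ (as $\Lambda(I-\mathbb{Q})w=0$) gives a \emph{closed} equation $\dot w_\mathcal{Y}=\widetilde{L}_\alpha w_\mathcal{Y}$ for $w_\mathcal{Y}=\mathbb{Q}w$, with $\widetilde{L}_\alpha=\mathbb{Q}A\mathbb{Q}-i\alpha\,\mathbb{Q}\Lambda\mathbb{Q}=:\widetilde A-i\alpha\widetilde\Lambda$ on $\mathcal{Y}$. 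Hence $\mathbb{Q}e^{tL_\alpha}=e^{t\widetilde L_\alpha}\mathbb{Q}$ and $\|\mathbb{Q}e^{tL_\alpha}\|_{\mathcal{X}\to\mathcal{X}}=\|e^{t\widetilde L_\alpha}\|_{\mathcal{Y}\to\mathcal{Y}}$, reducing \eqref{E:EnDi_Lna} to $\lim_{|\alpha|\to\infty}\sup_{t\ge\tau}\|e^{t\widetilde L_\alpha}\|_{\mathcal{Y}\to\mathcal{Y}}=0$.

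Next I would install the correct Hilbert structure on $\mathcal{Y}$. On $\mathcal{Y}$ (spanned by the $Y_n^m$ with $n\ge3$) the operator $B:=I+6\Delta^{-1}$ has eigenvalues $b_n=1-6/\lambda_n\in[\tfrac12,1)$, so $(u,v)_B:=(Bu,v)_{L^2(S^2)}$ is an inner product equivalent to the $L^2$ one. Because $\cos\theta$ and $\partial_\varphi$ commute and the recurrence coefficients of multiplication by $\cos\theta$ satisfy $c_n^+=c_{n+1}^-$ (the $Y_{n+1}^m$- and $Y_{n-1}^m$-couplings agree), a short computation shows that, with respect to $(\cdot,\cdot)_B$, the operator $\widetilde\Lambda$ is self-adjoint and $\widetilde A$ is self-adjoint with $\widetilde A\le-10$ and compact resolvent; moreover $\widetilde\Lambda$ is $\widetilde A$-compact, inherited from the $A$-compactness of $\Lambda$. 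Thus in the energy space $\widetilde L_\alpha=\widetilde A-i\alpha\widetilde\Lambda$ is a gapped self-adjoint dissipation plus a skew-adjoint advection, so $\|e^{t\widetilde L_\alpha}v\|_B$ is nonincreasing. I would then apply the criterion of \cite{CoKiRyZl08} (proved via the RAGE theorem): if the self-adjoint advection $\widetilde\Lambda$ has no eigenfunctions, then for each $\tau>0$ and $\delta>0$ one has $\|e^{t\widetilde L_\alpha}\|_{\mathcal{Y}\to\mathcal{Y}}\le\delta$ for all $t\ge\tau$ once $|\alpha|$ is large, which is exactly the claimed limit after passing from the $B$-norm back to the equivalent $L^2$-norm.

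The crux is therefore to show that $\widetilde\Lambda$ has \emph{empty} point spectrum on $\mathcal{Y}$, and this is the step I expect to be the main obstacle. Nonzero eigenvalues are excluded by Theorem \ref{T:NoEi_Lam}: in the decomposition $\mathcal{X}=\mathcal{Y}\oplus K$ the operator $\Lambda$ is lower triangular ($\Lambda K=0$), so an eigenpair $\widetilde\Lambda v=\mu v$ with $\mu\ne0$ lifts to the eigenvector $v+\mu^{-1}(I-\mathbb{Q})\Lambda v$ of $\Lambda$ on $L_0^2(S^2)$ with the same nonzero eigenvalue, a contradiction. The zero eigenvalue is not covered by Theorem \ref{T:NoEi_Lam} and must be ruled out by hand. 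Since everything decouples over the longitudinal wavenumber $m$ and $K$ carries only $|m|\le2$, one checks $\ker\widetilde\Lambda=\{0\}$ automatically for $|m|\ge3$, whereas for $|m|\in\{1,2\}$ the condition $\mathbb{Q}\Lambda v=0$ (that is, $\Lambda v\in K$) forces a second-order recurrence on the coefficients $v_n$ of $v$; solving it shows that every alternate coefficient vanishes while the others obey $v_{n+2}/v_n=-b_nc_n^+/(b_{n+2}c_{n+2}^-)\to-1$, so $|v_n|$ tends to a positive limit and $v\notin L^2(S^2)$. Hence no nonzero null vector exists. This asymptotic analysis of the Jacobi recurrence—again driven by the mixing structure of $\Lambda$, rather than the semigroup machinery—is where the genuine work lies.
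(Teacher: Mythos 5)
Your proposal is correct in substance, but it reaches \eqref{E:EnDi_Lna} by a genuinely different route than the paper. The skeleton is shared: you rescale to $L_\alpha=A-i\alpha\Lambda$ with $\alpha=a/\nu$, identify $N_{\mathcal{X}}(\Lambda)=\mathrm{span}\{Y_2^{\pm1},Y_2^{\pm2}\}$, reduce to the closed operator $\mathbb{Q}L_\alpha$ on $\mathcal{Y}$, and pass to the weighted inner product $(u,v)_B=(Bu,v)_{L^2(S^2)}$; this is exactly the paper's $(\cdot,\cdot)_{\mathcal{Y}'}$ from the proof of Theorem \ref{T:Semi_BoAl}, and your observation that $B$ symmetrizes $\mathbb{Q}\Lambda\mathbb{Q}$ is the content of Lemma \ref{L:PS_QP} and Assumption \ref{As:La_02}. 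The divergence is in the final mechanism: the paper proves a quantitative Gearhart--Pr\"uss type semigroup bound (Theorem \ref{T:Semi_BoAl}, via Wei's theorem \cite{Wei21}) and then shows the pseudospectral bound $\Phi_{\mathcal{Y}}(-\mathbb{Q}L_\alpha)\to0$ by a compactness/contradiction argument following \cite{IbMaMa19} (Theorem \ref{T:Phi_Conv}), whereas you invoke the RAGE-based relaxation-enhancement criterion of \cite{CoKiRyZl08}, which is legitimate precisely because in the $B$-inner product $-\widetilde{A}$ is positive self-adjoint with discrete spectrum and $\widetilde{\Lambda}$ is self-adjoint; this is essentially the route of Lin--Xu \cite{LinXu19} in the torus case, which the paper contrasts with its own in Section \ref{SS:In_Li}. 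Your two spectral inputs match conditions (d) and (e) of Theorem \ref{T:Phi_Conv}: the lifting $v\mapsto v+\mu^{-1}(I-\mathbb{Q})\Lambda v$ is a clean way to import Theorem \ref{T:NoEi_Lam} for nonzero eigenvalues, and your exclusion of $\ker\widetilde{\Lambda}$ in $\mathcal{Y}$ via the Jacobi recurrence (alternate coefficients vanish, the surviving ratios tend to $-1$ with a convergent correction product, so a nonzero solution cannot lie in $L^2$) is a correct alternative to the paper's hands-on argument that $d_2^{\pm1}=0$ by orthogonality and $d_2^{\pm2}=0$ because $\sin^2\theta/\cos\theta\notin L^2(S^2)$. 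What each approach buys: yours gives only the qualitative limit --- which is all the theorem asserts --- and spares the abstract machinery of Section \ref{S:Abst}, while the paper's route yields a decay estimate expressed through the pseudospectral bound, the quantity that feeds the quantitative $O(e^{-\sqrt{\nu}\,t})$ result announced for the companion paper. To make your sketch airtight you should still record two routine checks demanded by the criterion of \cite{CoKiRyZl08}: the relative bound $\|\widetilde{\Lambda}u\|\leq C\|(-\widetilde{A})^{1/2}u\|$ (it follows from $\|\Lambda u\|\leq C\|\nabla u\|$ together with $\lambda_n\leq\tfrac{3}{2}(\lambda_n-2)$ for $n\geq2$), and genuine self-adjointness, not mere symmetry, of $\widetilde{\Lambda}$ on $(\mathcal{Y},(\cdot,\cdot)_B)$, for instance by noting that $\widetilde{\Lambda}$ decomposes over the wavenumber $m$ into the bounded symmetric blocks $m\,\mathbb{Q}M_{\cos\theta}B|_{\mathcal{P}_m\mathcal{Y}}$.
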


We establish Theorem \ref{T:EnDi_Lna} in Section \ref{S:EnDi} by using abstract results for a perturbed operator given in Section \ref{S:Abst}. In particular, we prove a decay estimate for the semigroup generated by an $m$-accretive operator on a weighted Hilbert space which is a version of the Gearhart--Pr\"{u}ss type theorem shown by Wei \cite{Wei21} and combine it with a convergence result for the pseudospectral bound given by Ibrahim, Maekawa, and Masmoudi \cite{IbMaMa19}.

Theorem \ref{T:EnDi_Lna} implies that the rescaled flow $\mathbb{Q}e^{\frac{t}{\nu}\mathcal{L}^{\nu,a}}\omega_0$ converges to zero in $L^2(S^2)$ as $\nu\to0$ for each fixed $t>0$ and $a\in\mathbb{R}$, but does not give the actual convergence rate. The original flow $\mathbb{Q}e^{t\mathcal{L}^{\nu,a}}\omega_0$ is expected to decay at the rate $O(e^{-\sqrt{\nu}\,t})$ as in the case of the plane Kolmogorov flow \cite{BecWay13,IbMaMa19,WeiZha19,WeZhZh20}, and this will be rigorosuly verified in the companion paper \cite{MaeMiupre}. It is stressed here, however, that Theorem \ref{T:NoEi_Lam} requires the most original idea in the sphere case.

\begin{remark} \label{R:Li1_Sol}
  The enhanced dissipation of the form \eqref{E:EnDi_Lna} does not occur for a perturbation of the one-jet Kolmogorov type flow since the solution $\tilde{\omega}_1$ to \eqref{E:Li1_Eq} is determined by \eqref{E:Li1_Sol}. This seems to be natural from the point of view that the velocity $\mathbf{u}_1^a$ given by \eqref{E:Zna_Intro} with $n=1$ is a rigid rotation around the $x_3$-axis, i.e. a Killing vector field which does not have a mixing effect in the sense that it generates a one-parameter group of isometries of $S^2$.
\end{remark}

\subsection{Outline of the proof of Theorem \ref{T:NoEi_Lam}} \label{SS:In_Ou}
To prove Theorem \ref{T:NoEi_Lam}, we make use of the mixing structure of $\Lambda$ expressed by the recurrence relation
\begin{align} \label{E:Cos_Intro}
  \cos\theta \, Y_n^m = a_n^mY_{n-1}^m+a_{n+1}^mY_{n+1}^m
\end{align}
with nonzero coefficients $a_n^m$ (see \eqref{E:Y_Rec}). By the Fourier series expansion of $\Lambda$ with respect to the longitude $\varphi$, it is sufficient to show that for each $m\in\mathbb{Z}\setminus\{0\}$ and $\mu\in\mathbb{C}\setminus\{0\}$ the equation
\begin{align} \label{E:Eim_Intro}
  \mu u = \cos\theta \, Bu, \quad u \in \mathcal{P}_mL_0^2(S^2) = \overline{\mathrm{span}}\{Y_n^m \mid n\geq |m|\}
\end{align}
admits only a trivial solution $u=0$, where $B=I+6\Delta^{-1}$. Applying \eqref{E:Cos_Intro} and
\begin{align} \label{E:B_Intro}
  BY_n^m = \left(1-\frac{6}{\lambda_n}\right)Y_n^m
\end{align}
to \eqref{E:Eim_Intro}, we have $(u,Y_1^m)_{L^2(S^2)}=0$ when $|m|=1$. Then we easily get $u=0$ if $\mathrm{Im}\,\mu\neq0$ by taking the imaginary part of the $L^2(S^2)$-inner product of \eqref{E:Eim_Intro} with $Bu$ and using \eqref{E:B_Intro}. Also, when $\mu\in\mathbb{R}$ and $|\mu|\geq1$, we apply $|\cos\theta|\leq1$ and \eqref{E:B_Intro} to the $L^2(S^2)$-norm of \eqref{E:Eim_Intro} to find that $u=0$. The most difficult case is $\mu\in\mathbb{R}$ and $|\mu|<1$. In this case, one may try to use the ODE approach as in the flat torus case \cite{LinXu19,IbMaMa19,WeZhZh20}: since $v=\Delta^{-1}u\in\mathcal{P}_mL_0^2(S^2)$ is of the form $v=V(\theta)e^{im\varphi}$, one can rewrite \eqref{E:Eim_Intro} as a second order ODE for $V$ with singularity of order one at $\theta=\theta_\mu=\arccos\mu$. Then one can apply the uniqueness of a $C^1$ solution of the ODE to get $V\equiv0$ if one shows that $V$ is of class $C^1$ and vanishes at $\theta=\theta_\mu$ along with its derivative $V'$. In our case, however, it seems to be too difficult to show $V'(\theta_\mu)=0$ when $|m|=1,2$ since the coefficient $6$ of $\Delta^{-1}$ in $B$ is too large compared to $m^2=1,4$ appearing in the expression of $\Delta v$ for $v=V(\theta)e^{im\varphi}$ under the spherical coordinate system. To overcome this difficulty, we apply the mixing property \eqref{E:Cos_Intro} instead of using the ODE approach. Indeed, using \eqref{E:B_Intro} and the expression of $Y_{n+1}^m$ in terms of $\cos\theta\,Y_n^m$ and $Y_{n-1}^m$ by \eqref{E:Cos_Intro}, we rewrite \eqref{E:Eim_Intro} as
\begin{align} \label{E:UN_Intro}
  (\mu-x_3)(w_{<N}+Bu_{\geq N}) = \sigma_N^mY_{|m|}^m+6\mu\Delta^{-1}u_{\geq N} \quad (x_3=\cos\theta),
\end{align}
where $u_{\geq N}=\sum_{n\geq N}(u,Y_n^m)_{L^2(S^2)}Y_n^m$ with a large $N\in\mathbb{N}$, $w_{<N}\in\mathrm{span}\{Y_{|m|}^m,\dots,Y_{N-1}^m\}$, and $\sigma_N^m\in\mathbb{C}$. Then, setting $x_3=\mu$ in \eqref{E:UN_Intro} and noting that $Y_{|m|}^m\neq0$ at $x_3=\mu$, we find that $|\sigma_N^m|$ is bounded by $\|\Delta^{-1}u_{\geq N}\|_{L^\infty(S^2)}$ and thus by $\|(-\Delta)^{-1/2}u_{\geq N}\|_{L^2(S^2)}$ (see Lemma \ref{L:Linf}). We take the $L^2(S^2)$-inner product of \eqref{E:UN_Intro} divided by $\mu-x_3$ with $Bu_{\geq N}$. Then we use the Hardy type inequality for $\sigma_N^mY_{|m|}^m+6\mu\Delta^{-1}u_{\geq N}$ (see Lemma \ref{L:Hardy}), the estimate for $|\sigma_N^m|$, and $(Bu_{\geq N},w_{<N})_{L^2(S^2)}=0$ to get $\|Bu_{\geq N}\|_{L^2(S^2)}\leq C_{m,\mu}\|(-\Delta)^{-1/2}u_{\geq N}\|_{L^2(S^2)}$ with a constant $C_{m,\mu}$ depending only on $m$ and $\mu$. In this inequality, the left-hand side is bounded below by $\|u_{\geq N}\|_{L^2(S^2)}/2$ by \eqref{E:B_Intro}, while the right-hand side is bounded above by $C_{m,\mu}\lambda_N^{-1}\|u_{\geq N}\|_{L^2(S^2)}$ with $\lambda_N=N(N+1)$ by $(-\Delta)^{-1/2}Y_n^m=\lambda_n^{-1}Y_n^m$. Hence $u_{\geq N}=0$ for a sufficiently large $N$. Then we again use the mixing property \eqref{E:Cos_Intro} and $u_{\geq N}=0$ to \eqref{E:Eim_Intro} to get $(u,Y_n^m)_{L^2(S^2)}=0$ for $n=N-1,\dots,|m|$ inductively, and thus $u=0$.

The proof outlined here relies on the properties of the basis functions $Y_n^m$ of $L^2(S^2)$. In particular, the recurrence relation \eqref{E:Cos_Intro} representing the mixing structure of $\Lambda$ is crucial for the proof. Our approach may be applicable to the spectral analysis of other linear operators, especially in the higher dimensional case where it seems to be difficult to apply the ODE approach.

\subsection{Literature overview} \label{SS:In_Li}
The Navier--Stokes equations on spheres and more general manifolds appear in various fields such as geophysical fluid dynamics and biology. Several authors studied the Navier--Stokes and vorticity equations on spheres and manifolds \cite{IliFil88,Ili90,CaRaTi99,Ili04,Wir15,Lic16,Ski17}, but in these works the viscous term is taken to be $\nu\Delta_H\mathbf{u}$ (without curvature term) by analogy of the flat domain case. There are also a lot of works \cite{Tay92,Prie94,Nag99,MitTay01,DinMit04,KheMis12,ChaCzu13,ChaYon13,SaTaYa13,SaTaYa15,Pie17,KohWen18,PrSiWi20,SamTuo20} on the Navier--Stokes equations on manifolds in which the viscous term has the curvature term $\mathrm{Ric}(\mathbf{u})$ as in \eqref{E:NS_Intro}.

The linear stability of the Kolmogorov type flows for the Navier--Stokes equations on a sphere was studied by Ilyin \cite{Ili04} with viscous term $\nu\Delta_H\mathbf{u}$ and by Sasaki, Takehiro, and Yamada \cite{SaTaYa13} with viscous term $\nu(\Delta_H\mathbf{u}+2\mathbf{u})$ as in \eqref{E:NS_Intro}. Sasaki, Takehiro, and Yamada \cite{SaTaYa15} also studied the nonlinear stability of the Kolmogorov type flows. They showed that the $n$-jet Kolmogorov type flow is globally stable for all $\nu>0$ when $n=1,2$ and unstable for a small $\nu>0$ when $n\geq3$. We note that Theorem \ref{T:Li2_LS} is not covered by the results of the above papers since the viscous term is different in \cite{Ili04} and the stability of the two-jet Kolmogorov type flow is studied in a slightly smaller space in \cite{SaTaYa13,SaTaYa15}. In particular, we find that a solution to \eqref{E:Li2_Eq} may converge to a nonzero function as $t\to\infty$ in the whole $L_0^2(S^2)$, while it is shown in \cite{SaTaYa13,SaTaYa15} that a perturbation of the two-jet Kolmogorov type flow just converges to zero as $t\to\infty$ in the orthogonal complement of $\mathrm{span}\{Y_1^0,Y_1^{\pm1}\}$ in $L_0^2(S^2)$.
Also, Taylor \cite{Tay16} studied the linear stability of the Kolmogorov type flows for the Euler equations on a sphere.

Let us also mention the Kolmogorov flow in a 2D flat torus. The Kolmogorov flow is a stationary solution to the 2D Navier--Stokes equations in a flat torus with shear external force. By Iudovich \cite{Iud65} it was shown that the Kolmogorov flow in the square torus is globally stable for an arbitrary viscosity coefficient (see also \cite{Mar86}). It is also known that the Kolmogorov flow may become unstable when the length of the periodicity in one direction is changed (see e.g. \cite{MesSin61,Iud65,OkaSho93,MatMiy02}). In the stable case, Beck and Wayne \cite{BecWay13} numerically conjectured that a perturbation of the Kolmogorov flow rapidly decays at the rate $O(e^{-\sqrt{\nu}\,t})$ compared to the usual one $O(e^{-\nu t})$ when the viscosity coefficient $\nu$ is sufficiently small. They also verified this enhanced dissipation for a linearized operator without a nonlocal term by using the hypocoercivity method developed by Villani \cite{Vil09}. Lin and Xu \cite{LinXu19} studied the full linearized operator and also the nonlinear problem. They proved the enhanced dissipation in both cases but without an explicit decay rate based on the Hamiltonian structure of a perturbation operator and the RAGE theorem which is used in the study of the enhanced dissipation for an advection-diffusion equation \cite{CoKiRyZl08,Zla10}. The enhanced dissipation for the linearized problem with the decay rate $O(e^{-\sqrt{\nu}\,t})$ was confirmed by Ibrahim, Maekawa, and Masmoudi \cite{IbMaMa19} based on the pseudospectral bound method, by Wei and Zhang \cite{WeiZha19} based on the hypocoercivity method, and by Wei, Zhang, and Zhao \cite{WeZhZh20} based on the wave operator method.

\subsection{Organization of this paper} \label{SS:In_Or}
The rest of this paper is organized as follows. Section \ref{S:Pre} gives basic facts of calculus on $S^2$. In Section \ref{S:LiSt} we study the linear stability of the two-jet Kolmogorov type flow. Section \ref{S:No_Ei} is devoted to the proof of Theorem \ref{T:NoEi_Lam}. In Section \ref{S:EnDi} we show that the enhanced dissipation occurs for the rescaled flow $e^{\frac{t}{\nu}\mathcal{L}^{\nu,a}}\omega_0$. Also, in Section \ref{S:Abst} we give abstract results used in the study of the enhanced dissipation. In Section \ref{S:DeVo} we derive the vorticity equation \eqref{E:Vo_Intro} and linearize it around the $n$-jet Kolmogorov type flow.

\section{Preliminaries} \label{S:Pre}
In this section we give basic facts of calculus on $S^2$.

Let $S^2$ be the unit sphere in $\mathbb{R}^3$ equipped with the Riemannian metric induced by the Euclidean metric of $\mathbb{R}^3$. We denote by $\theta$ and $\varphi$ the colatitude and longitude so that $S^2$ is parametrized by \eqref{E:SpCo_Intro}. For a (complex-valued) function $u$ on $S^2$, we sometimes abuse the notation
\begin{align*}
  u(\theta,\varphi) = u(\sin\theta\cos\varphi,\sin\theta\sin\varphi,\cos\theta), \quad \theta\in[0,\pi],\,\varphi\in[0,2\pi)
\end{align*}
when no confusion may occur. Thus the gradient of $u$ is expressed as
\begin{align} \label{E:Grad_S2}
  \nabla u = \partial_\theta u
  \begin{pmatrix}
    \cos\theta\cos\varphi \\
    \cos\theta\sin\varphi \\
    -\sin\theta
  \end{pmatrix}
  +\frac{\partial_\varphi u}{\sin^2\theta}
  \begin{pmatrix}
    -\sin\theta\sin\varphi \\
    \sin\theta\cos\varphi \\
    0
  \end{pmatrix}
\end{align}
and the integral of $u$ over $S^2$ is given by
\begin{align} \label{E:Int_S2}
  \int_{S^2}u\,d\mathcal{H}^2 = \int_0^{2\pi}\left(\int_0^\pi u(\theta,\varphi)\sin\theta\,d\theta\right)\,d\varphi,
\end{align}
where $\mathcal{H}^k$ is the Hausdorff measure of dimension $k\in\mathbb{N}$. As usual, we set
\begin{align*}
  (u,v)_{L^2(S^2)} = \int_{S^2}u\bar{v}\,d\mathcal{H}^2, \quad \|u\|_{L^2(S^2)} = (u,u)_{L^2(S^2)}^{1/2}, \quad u, v\in L^2(S^2),
\end{align*}
where $\bar{v}$ is the complex conjugate of $v$, and write $H^k(S^2)$, $k\in\mathbb{Z}_{\geq0}$ for the Sobolev spaces of $L^2$ functions on $S^2$ with $H^0(S^2)=L^2(S^2)$ (see \cite{Aub98}).

Let $\Delta$ be the Laplace--Beltrami operator on $S^2$. It is well known (see e.g. \cite{VaMoKh88,Tes14}) that $\lambda_n=n(n+1)$ is an eigenvalue of $-\Delta$ with multiplicity $2n+1$ for each $n\in\mathbb{Z}_{\geq0}$ and the corresponding eigenfunctions are the spherical harmonics
\begin{align} \label{E:SpHa}
  Y_n^m = Y_n^m(\theta,\varphi) = \sqrt{\frac{2n+1}{4\pi}\frac{(n-m)!}{(n+m)!}} \, P_n^m(\cos\theta)e^{im\varphi}, \quad m=0,\pm1,\dots,\pm n.
\end{align}
Here $P_n^0$, $n\in\mathbb{Z}_{\geq0}$ are the Legendre polynomials defined as
\begin{align*}
  P_n^0(s) = P_n(s) = \frac{1}{2^nn!}\frac{d^n}{ds^n}(s^2-1)^n, \quad s\in(-1,1)
\end{align*}
and the associated Legendre functions $P_n^m$, $n\in\mathbb{Z}_{\geq0}$, $|m|\leq n$ are given by
\begin{align*}
  P_n^m(s) =
  \begin{cases}
    (-1)^m(1-s^2)^{m/2}\displaystyle\frac{d^m}{ds^m}P_n(s), &m\geq0, \\
    (-1)^{|m|}\displaystyle\frac{(n-|m|)!}{(n+|m|)!}P_n^{|m|}(s), &m = -|m| < 0
  \end{cases}
\end{align*}
so that $Y_n^{-m}=(-1)^m\overline{Y_n^m}$ (see \cite{Led72,DLMF}). Moreover, the set of all $Y_n^m$ forms an orthonormal basis of $L^2(S^2)$, i.e. for each $u\in L^2(S^2)$ we have $u=\sum_{n=0}^\infty\sum_{m=-n}^nc_n^mY_n^m$ with $c_n^m=(u,Y_n^m)_{L^2(S^2)}$. Note that, here and in what follows, the superscript $m$ of coefficients just corresponds to that of $Y_n^m$ and does not mean the $m$-th power unless otherwise stated. It is also known that the recurrence relation
\begin{align*}
  (n-m+1)P_{n+1}^m(s)-(2n+1)sP_n^m(s)+(n+m)P_{n-1}^m(s) = 0
\end{align*}
holds (see \cite[(7.12.12)]{Led72}) and thus (see also \cite[Section 5.7]{VaMoKh88})
\begin{align} \label{E:Y_Rec}
  \cos\theta\,Y_n^m = a_n^mY_{n-1}^m+a_{n+1}^mY_{n+1}^m, \quad a_n^m = \sqrt{\frac{(n-m)(n+m)}{(2n-1)(2n+1)}}
\end{align}
for $n\in\mathbb{Z}_{\geq0}$ and $|m|\leq n$, where we consider $Y_{|m|-1}^m\equiv0$.

Let $L_0^2(S^2)$ be the space of $L^2$ functions on $S^2$ with zero mean, i.e.
\begin{align*}
  L_0^2(S^2) = \left\{u\in L^2(S^2) ~\middle|~ \int_{S^2}u\,d\mathcal{H}^2 = 0\right\} = \{u\in L^2(S^2) \mid (u,Y_0^0)_{L^2(S^2)}=0\}.
\end{align*}
Then $\Delta$ is invertible, self-adjoint, and with compact resolvent as a linear operator
\begin{align*}
  \Delta\colon D_{L_0^2(S^2)}(\Delta) \subset L_0^2(S^2) \to L_0^2(S^2), \quad D_{L_0^2(S^2)}(\Delta) = L_0^2(S^2)\cap H^2(S^2).
\end{align*}
Also, for $s\in\mathbb{R}$, the operator $(-\Delta)^s$ is defined on $L_0^2(S^2)$ by
\begin{align} \label{E:Def_Laps}
  (-\Delta)^su = \sum_{n=1}^\infty\sum_{m=-n}^n\lambda_n^s(u,Y_n^m)_{L^2(S^2)}Y_n^m, \quad u \in L_0^2(S^2).
\end{align}
We easily observe by a density argument and integration by parts that
\begin{align} \label{E:Lap_Half}
  \|(-\Delta)^{1/2}u\|_{L^2(S^2)} = \|\nabla u\|_{L^2(S^2)}, \quad u\in L_0^2(S^2)\cap H^1(S^2).
\end{align}
Let $u$ be a function on $S^2$. We write $u=U(\theta)e^{im\varphi}$ if $u$ is of the form
\begin{align*}
  u(\sin\theta\cos\varphi,\sin\theta\sin\varphi,\cos\theta) = U(\theta)e^{im\varphi}, \quad \theta\in[0,\pi], \, \varphi\in[0,2\pi)
\end{align*}
with some function $U$ of the colatitude $\theta$ and $m\in\mathbb{Z}$. In this case, we have
\begin{align} \label{E:L2_Mode}
  \begin{aligned}
    \|u\|_{L^2(S^2)}^2 &= 2\pi\int_0^\pi|U(\theta)|^2\sin\theta\,d\theta, \\
    \|\nabla u\|_{L^2(S^2)}^2 &= 2\pi\int_0^\pi\left(|U'(\theta)|^2+\frac{m^2}{\sin^2\theta}|U(\theta)|^2\right)\sin\theta\,d\theta, \quad U' = \frac{dU}{d\theta}.
  \end{aligned}
\end{align}
When $u=U(\theta)e^{im\varphi}$ is in $L^2(S^2)$, we can write $u=\sum_{n\geq|m|}c_n^mY_n^m$ since $(u,Y_n^{m'})_{L^2(S^2)}=0$ for $m'\neq m$. In particular, if $m\neq0$, then $u\in L_0^2(S^2)$ and we can use \eqref{E:Lap_Half} to $u$.

For a function $u$ on $S^2$ and $m\in\mathbb{Z}$ we define a function $\mathcal{P}_mu$ on $S^2$ by
\begin{multline} \label{E:Def_Proj}
  \mathcal{P}_mu(\sin\theta\cos\varphi,\sin\theta\sin\varphi,\cos\theta) \\
  = \frac{e^{im\varphi}}{2\pi}\int_0^{2\pi}u(\sin\theta\cos\phi,\sin\theta\sin\phi,\cos\theta)e^{-im\phi}\,d\phi.
\end{multline}
Note that $\mathcal{P}_mu=u$ and $\mathcal{P}_{m'}u=0$ for $m'\neq m$ if $u=U(\theta)e^{im\varphi}$.

\begin{lemma} \label{L:NS_Pole}
  If $u=U(\theta)e^{im\varphi}\in C(S^2)$ with $m\in\mathbb{Z}\setminus\{0\}$, then $U(0)=U(\pi)=0$.
\end{lemma}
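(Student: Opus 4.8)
The plan is to exploit the fact that the spherical-coordinate parametrization \eqref{E:SpCo_Intro} degenerates at the poles: the entire longitude circle $\{\varphi\in[0,2\pi)\}$ collapses to the single point $(0,0,1)$ when $\theta=0$ and to $(0,0,-1)$ when $\theta=\pi$. Continuity of $u$ on $S^2$ will therefore force the representation $U(\theta)e^{im\varphi}$ to become independent of $\varphi$ in these limits, and since $m\neq0$ this can only happen if $U$ vanishes there.

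First I would fix the north pole $\mathbf{x}(0,\varphi)=(0,0,1)$, which is the same point for every $\varphi$. Because $u\in C(S^2)$, the value of $u$ at this point is a single well-defined number, and for each fixed $\varphi\in[0,2\pi)$ the approach $\mathbf{x}(\theta,\varphi)\to(0,0,1)$ as $\theta\to0^+$ gives
\[
  u(0,0,1) = \lim_{\theta\to0^+}U(\theta)e^{im\varphi} = U(0)e^{im\varphi},
\]
using continuity of $U$ at $\theta=0$, which is inherited from continuity of $u$ along the meridian $\varphi=0$.

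The right-hand side must equal the same constant $u(0,0,1)$ for every choice of $\varphi$; that is, the map $\varphi\mapsto U(0)e^{im\varphi}$ is constant on $[0,2\pi)$. Since $m\in\mathbb{Z}\setminus\{0\}$, the factor $e^{im\varphi}$ is genuinely $\varphi$-dependent, so the only way for the product to be constant is $U(0)=0$. Running the identical argument at the south pole $\mathbf{x}(\pi,\varphi)=(0,0,-1)$ yields $U(\pi)=0$, completing the proof.

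I do not anticipate a genuine obstacle here; the lemma is a direct consequence of the coordinate degeneracy at the poles. The only point requiring a little care is confirming that $U$ itself is continuous up to $\theta=0,\pi$, and this follows because $u$ is continuous on the compact set $S^2$ while $U(\theta)=u(\mathbf{x}(\theta,0))$ is the restriction of $u$ to the continuous meridian curve $\theta\mapsto\mathbf{x}(\theta,0)$.
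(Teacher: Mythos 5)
Your proof is correct, and it hinges on the same fact as the paper's: at the poles the parametrization \eqref{E:SpCo_Intro} collapses the whole circle of longitudes to a single point, which is incompatible with the genuinely $\varphi$-dependent factor $e^{im\varphi}$ ($m\neq0$) unless $U$ vanishes there. The mechanics differ slightly, though. The paper argues through the projection $\mathcal{P}_m$ of \eqref{E:Def_Proj}: since $u=\mathcal{P}_mu$, setting $\theta=0,\pi$ makes the $u$-factor in the integrand the constant $u(0,0,\pm1)$, and then $\int_0^{2\pi}e^{-im\phi}\,d\phi=0$ gives $u(0,0,\pm1)=0$ at once; your argument instead compares the pointwise values $U(0)e^{im\varphi}$ over different $\varphi$ and needs no projection operator at all, which is marginally more elementary. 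One simplification you could make: the limit $\theta\to0^+$ and the attendant discussion of continuity of $U$ are superfluous, because the defining relation $u(\sin\theta\cos\varphi,\sin\theta\sin\varphi,\cos\theta)=U(\theta)e^{im\varphi}$ in Section \ref{S:Pre} is required to hold for all $\theta\in[0,\pi]$, in particular at $\theta=0$ and $\theta=\pi$; evaluating there directly gives $u(0,0,1)=U(0)e^{im\varphi}$ for every $\varphi$, and choosing two values $\varphi_1,\varphi_2$ with $e^{im\varphi_1}\neq e^{im\varphi_2}$ already forces $U(0)=0$. The hypothesis $u\in C(S^2)$ serves only to guarantee that $u$ has well-defined pointwise values (rather than being an $L^2$ equivalence class), not to enable a limiting argument.
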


\begin{proof}
  Since $u=\mathcal{P}_mu$, we set $\theta=0,\pi$ and $\varphi=0$ in \eqref{E:Def_Proj} to get
  \begin{align*}
    u(0,0,\pm1) = \mathcal{P}_mu(0,0,\pm1) = \frac{1}{2\pi}\int_0^{2\pi}u(0,0,\pm1)e^{-im\phi}\,d\phi = 0,
  \end{align*}
  where the last equality follows from $m\neq0$. Hence $U(0)=U(\pi)=0$.
\end{proof}

\begin{lemma} \label{L:Proj_Bdd}
  For $\theta_1,\theta_2\in[0,\pi]$ with $\theta_1\leq\theta_2$ let
  \begin{align} \label{E:Def_Band}
    S^2(\theta_1,\theta_2)=\{(x_1,x_2,x_3)\in S^2 \mid x_3 = \cos\theta,\,\theta\in(\theta_1,\theta_2) \}.
  \end{align}
  Then for each $m\in\mathbb{Z}$ we have
  \begin{align} \label{E:Proj_Bdd}
    \begin{alignedat}{2}
      \|\mathcal{P}_mu\|_{L^2(S^2(\theta_1,\theta_2))} &\leq \|u\|_{L^2(S^2(\theta_1,\theta_2))}, &\quad &u\in L^2(S^2(\theta_1,\theta_2)), \\
      \|\nabla\mathcal{P}_mu\|_{L^2(S^2(\theta_1,\theta_2))} &\leq \|\nabla u\|_{L^2(S^2(\theta_1,\theta_2))}, &\quad &u\in H^1(S^2(\theta_1,\theta_2)).
    \end{alignedat}
  \end{align}
\end{lemma}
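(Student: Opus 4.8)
The plan is to recognize that $\mathcal{P}_m$ is nothing but the slicewise Fourier projection onto the single longitudinal mode $e^{im\varphi}$, so that both estimates in \eqref{E:Proj_Bdd} should follow from Bessel's inequality on each latitude circle $\{\theta=\mathrm{const}\}$, integrated against the weight $\sin\theta$. Foliating the band $S^2(\theta_1,\theta_2)$ by these circles, Fubini gives
\begin{align*}
  \|u\|_{L^2(S^2(\theta_1,\theta_2))}^2 = \int_{\theta_1}^{\theta_2}\left(\int_0^{2\pi}|u(\theta,\varphi)|^2\,d\varphi\right)\sin\theta\,d\theta,
\end{align*}
and the definition \eqref{E:Def_Proj} shows that for a.e.\ $\theta$ the slice $\mathcal{P}_mu(\theta,\cdot)$ is exactly the orthogonal projection of $u(\theta,\cdot)$ onto $\mathrm{span}\{e^{im\varphi}\}$ in $L^2(0,2\pi)$.

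First I would prove the $L^2$ bound. By Fubini the slice $u(\theta,\cdot)$ lies in $L^2(0,2\pi)$ for a.e.\ $\theta$, so Bessel's inequality gives $\int_0^{2\pi}|\mathcal{P}_mu(\theta,\varphi)|^2\,d\varphi\leq\int_0^{2\pi}|u(\theta,\varphi)|^2\,d\varphi$ for a.e.\ $\theta$; integrating against $\sin\theta\,d\theta$ over $(\theta_1,\theta_2)$ yields the first inequality of \eqref{E:Proj_Bdd}. This step requires neither smoothness nor a density argument.

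For the gradient bound I would reduce to the $L^2$ bound just proved (all $L^2$ norms below are over $S^2(\theta_1,\theta_2)$). Since $\mathcal{P}_m$ acts only in $\varphi$, it commutes with $\partial_\theta$ and with multiplication by the $\theta$-only factor $1/\sin\theta$, while integration by parts in the averaging integral \eqref{E:Def_Proj} gives $\partial_\varphi\mathcal{P}_mu=\mathcal{P}_m(\partial_\varphi u)$; hence $\partial_\theta\mathcal{P}_mu=\mathcal{P}_m(\partial_\theta u)$ and $(\partial_\varphi\mathcal{P}_mu)/\sin\theta=\mathcal{P}_m(\partial_\varphi u/\sin\theta)$. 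Using the pointwise identity $|\nabla u|^2=|\partial_\theta u|^2+|\partial_\varphi u|^2/\sin^2\theta$ underlying \eqref{E:L2_Mode} (read off from the orthogonal frame in \eqref{E:Grad_S2}), I would apply the first inequality separately to the scalar functions $\partial_\theta u$ and $\partial_\varphi u/\sin\theta$:
\begin{align*}
  \|\nabla\mathcal{P}_mu\|_{L^2}^2 &= \|\mathcal{P}_m(\partial_\theta u)\|_{L^2}^2+\|\mathcal{P}_m(\partial_\varphi u/\sin\theta)\|_{L^2}^2 \\
  &\leq \|\partial_\theta u\|_{L^2}^2+\|\partial_\varphi u/\sin\theta\|_{L^2}^2 = \|\nabla u\|_{L^2}^2,
\end{align*}
which is the second inequality.

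The main obstacle is justifying the commutation relations for $u$ that is merely $H^1$, especially when $\theta_1=0$ or $\theta_2=\pi$, so that the band contains a pole where the coordinates $(\theta,\varphi)$ degenerate. I would handle this by density: for smooth $u$ the commutations, and hence the gradient bound, follow from differentiation under the integral in \eqref{E:Def_Proj} (and are consistent with $\mathcal{P}_mu$ vanishing at the poles for $m\neq0$ by Lemma \ref{L:NS_Pole}), while the already-proven $L^2$ bound together with the gradient bound makes $\mathcal{P}_m$ bounded in the $H^1$ norm on a dense subspace, so it extends continuously to all of $H^1(S^2(\theta_1,\theta_2))$ with the estimate preserved in the limit. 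The delicate point is to verify that the approximating smooth functions converge in $H^1$ up to the pole and that the weak gradient of the $H^1$-limit is correctly identified with $\nabla\mathcal{P}_mu$; the remainder is the elementary slicewise Bessel inequality.
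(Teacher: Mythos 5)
Your proposal is correct and follows essentially the same route as the paper: a slicewise bound in $\varphi$ (your Bessel inequality is exactly the paper's H\"{o}lder estimate \eqref{Pf_PB:Hol}, since projecting onto a single mode they coincide), the commutation relations $\partial_\theta\mathcal{P}_mu=\mathcal{P}_m(\partial_\theta u)$ and $\partial_\varphi\mathcal{P}_mu=\mathcal{P}_m(\partial_\varphi u)$, and the spherical-coordinate expression of $|\nabla u|^2$ read off from \eqref{E:Grad_S2}. The only difference is that you justify the commutations for merely $H^1$ functions via density, whereas the paper carries out the integration by parts in \eqref{E:Def_Proj} directly with derivatives interpreted weakly; both are adequate.
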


\begin{proof}
  The first inequality of \eqref{E:Proj_Bdd} follows from \eqref{E:Int_S2} and
  \begin{align} \label{Pf_PB:Hol}
    |\mathcal{P}_mu(\theta,\varphi)| \leq \frac{1}{2\pi}\int_0^{2\pi}|u(\theta,\phi)|\,d\phi \leq \frac{1}{\sqrt{2\pi}}\left(\int_0^{2\pi}|u(\theta,\phi)|^2\,d\phi\right)^{1/2}
  \end{align}
  by H\"{o}lder's inequality. Also, since $\partial_\theta\mathcal{P}_mu=\mathcal{P}_m(\partial_\theta u)$ and
  \begin{align*}
    \mathcal{P}_m(\partial_\varphi u)(\theta,\varphi) &= \frac{e^{im\varphi}}{2\pi}\int_0^{2\pi}\partial_\phi u(\theta,\phi)e^{-im\varphi}\,d\phi = \frac{ime^{im\varphi}}{2\pi}\int_0^{2\pi}u(\theta,\phi)e^{-im\phi}\,d\phi \\
    &= \partial_\varphi\mathcal{P}_mu(\theta,\varphi)
  \end{align*}
  by integration by parts and $u(\theta,0)=u(\theta,2\pi)$, we have the second inequality of \eqref{E:Proj_Bdd} by \eqref{E:Grad_S2}, \eqref{E:Int_S2}, and \eqref{Pf_PB:Hol} with $u$ replaced by $\partial_\theta u$ and $\partial_\varphi u$.
\end{proof}

\begin{lemma} \label{L:Dense}
  For $m\in\mathbb{Z}$ let $u=U(\theta)e^{im\varphi}\in H^1(S^2)$. Then there exist smooth functions on $S^2$ of the form $u_k=U_k(\theta)e^{im\varphi}$, $k\in\mathbb{N}$ that converge to $u$ strongly in $H^1(S^2)$.
\end{lemma}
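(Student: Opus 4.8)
The plan is to start from the standard density of smooth functions in $H^1(S^2)$ and then project onto the $m$-th longitudinal mode using the operator $\mathcal{P}_m$, whose $L^2$- and gradient-boundedness are already recorded in Lemma \ref{L:Proj_Bdd}. Since $S^2$ is a compact smooth manifold, $C^\infty(S^2)$ is dense in $H^1(S^2)$, so I would first choose $v_k\in C^\infty(S^2)$ with $v_k\to u$ in $H^1(S^2)$ and set $u_k=\mathcal{P}_mv_k$. By the definition \eqref{E:Def_Proj}, each $u_k$ is automatically of the required form $U_k(\theta)e^{im\varphi}$ with $U_k(\theta)=\frac{1}{2\pi}\int_0^{2\pi}v_k(\theta,\phi)e^{-im\phi}\,d\phi$.

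Granting for the moment that every $u_k$ is smooth, the convergence follows at once. Indeed, $\mathcal{P}_m$ is linear and $u=U(\theta)e^{im\varphi}$ satisfies $\mathcal{P}_mu=u$, so $u_k-u=\mathcal{P}_m(v_k-u)$. Applying both inequalities of \eqref{E:Proj_Bdd} with $(\theta_1,\theta_2)=(0,\pi)$, so that $S^2(0,\pi)$ is $S^2$ minus its two poles and hence carries the full $L^2$-norms of $S^2$, gives $\|u_k-u\|_{H^1(S^2)}\le\|v_k-u\|_{H^1(S^2)}\to0$.

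The step I expect to require genuine care is the smoothness of $u_k=\mathcal{P}_mv_k$ on all of $S^2$, in particular at the poles $\theta=0,\pi$ where the parametrization \eqref{E:SpCo_Intro} degenerates; one cannot read this off directly from the formula for $U_k$. I would instead realize $\mathcal{P}_m$ as an average over rotations about the $x_3$-axis. Writing $R_\psi v$ for the function $(\theta,\varphi)\mapsto v(\theta,\varphi-\psi)$, which is the rotation of $v$ by angle $\psi$ and is an isometry of $S^2$, the substitution $\phi=\varphi-\psi$ in \eqref{E:Def_Proj} yields $\mathcal{P}_mv_k=\frac{1}{2\pi}\int_0^{2\pi}e^{im\psi}\,(R_\psi v_k)\,d\psi$. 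Because $v_k\in C^\infty(S^2)$ and the rotations act smoothly, the map $\psi\mapsto R_\psi v_k$ is a smooth $C^\infty(S^2)$-valued function on the circle; differentiating under the integral sign in an arbitrary local chart, including charts centred at the poles, is therefore justified, and the integral defines an element of $C^\infty(S^2)$. This bypasses any delicate analysis of $U_k$ near $\theta=0,\pi$.

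As an alternative to the rotation argument, one may note that $\mathcal{P}_m$ merely retains the terms with upper index $m$ in the expansion $v_k=\sum_{n}\sum_{m'}c_n^{m'}Y_n^{m'}$, so by the spectral characterization of the Sobolev norms underlying \eqref{E:Def_Laps} it is norm-nonincreasing on $H^s(S^2)$ for every $s\ge0$; a smooth $v_k$ lies in $H^s(S^2)$ for all $s\ge0$, whence $u_k=\mathcal{P}_mv_k\in C^\infty(S^2)$ by Sobolev embedding.
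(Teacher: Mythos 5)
Your proposal is correct and follows essentially the same route as the paper: approximate $u$ by smooth functions $v_k$, set $u_k=\mathcal{P}_mv_k$, and conclude convergence from Lemma \ref{L:Proj_Bdd} together with $\mathcal{P}_mu=u$. The only difference is that you supply a genuine justification (the rotation-averaging argument, or the spectral alternative) of the smoothness of $\mathcal{P}_mv_k$ at the poles, a point the paper simply asserts from \eqref{E:Def_Proj}.
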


\begin{proof}
  Since $S^2$ is compact and without boundary, we can take smooth functions $v_k$, $k\in\mathbb{N}$ that converge to $u$ strongly in $H^1(S^2)$ by standard localization and mollification arguments. Then $u_k=\mathcal{P}_mv_k$, $k\in\mathbb{N}$ are smooth functions of the form $u_k=U_k(\theta)e^{im\varphi}$ by \eqref{E:Def_Proj}. Moreover, since $\mathcal{P}_mu=u$ by $u=U(\theta)e^{im\varphi}$, we have
  \begin{align*}
    \|u-u_k\|_{H^1(S^2)} = \|\mathcal{P}_mu-\mathcal{P}_mv_k\|_{H^1(S^2)} \leq \|u-v_k\|_{H^1(S^2)} \to 0 \quad\text{as}\quad k\to\infty
  \end{align*}
  by \eqref{E:Proj_Bdd} and the strong convergence of $\{v_k\}_{k=1}^\infty$ to $u$ in $H^1(S^2)$.
\end{proof}

\begin{lemma} \label{L:Linf}
  For $m\in\mathbb{Z}\setminus\{0\}$ let $u=U(\theta)e^{im\varphi}\in H^1(S^2)$. Then
  \begin{align} \label{E:Linf}
    \|u\|_{L^\infty(S^2)}^2 = \|U\|_{L^\infty(0,\pi)}^2 \leq \frac{1}{\pi|m|}\|(-\Delta)^{1/2}u\|_{L^2(S^2)}^2.
  \end{align}
\end{lemma}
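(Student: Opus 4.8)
The plan is to reduce everything to a one-dimensional weighted Agmon-type inequality for the profile $U$ and then transfer it to $u$ through the explicit formulas \eqref{E:Lap_Half} and \eqref{E:L2_Mode}. Since $|u(\theta,\varphi)|=|U(\theta)|$, the equality $\|u\|_{L^\infty(S^2)}^2=\|U\|_{L^\infty(0,\pi)}^2$ is immediate, and because $m\neq0$ we may use \eqref{E:Lap_Half} to replace $\|(-\Delta)^{1/2}u\|_{L^2(S^2)}$ by $\|\nabla u\|_{L^2(S^2)}$. Writing $I_1=\int_0^\pi|U'|^2\sin\theta\,d\theta$ and $I_2=\int_0^\pi\frac{|U|^2}{\sin^2\theta}\sin\theta\,d\theta$, formula \eqref{E:L2_Mode} gives $\|\nabla u\|_{L^2(S^2)}^2=2\pi(I_1+m^2I_2)$, so it suffices to prove the pointwise bound $|U(\theta_0)|^2\leq\frac{1}{|m|}(I_1+m^2I_2)$ for every $\theta_0\in(0,\pi)$.

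First I would establish this for smooth $u$, where Lemma \ref{L:NS_Pole} guarantees $U(0)=U(\pi)=0$. Using $U(0)=0$ and the fundamental theorem of calculus, $|U(\theta_0)|^2=\int_0^{\theta_0}(|U|^2)'\,d\theta\leq 2\int_0^\pi|U'|\,|U|\,d\theta$. Splitting the integrand as $|U'|\,|U|=(|U'|\sqrt{\sin\theta})(|U|/\sqrt{\sin\theta})$ and applying the Cauchy--Schwarz inequality yields $|U(\theta_0)|^2\leq 2\sqrt{I_1}\sqrt{I_2}$, since $\int_0^\pi|U|^2/\sin\theta\,d\theta=I_2$. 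Rewriting $2\sqrt{I_1}\sqrt{I_2}=\frac{2}{|m|}\sqrt{I_1\,(m^2I_2)}$ and applying the arithmetic--geometric mean inequality gives $|U(\theta_0)|^2\leq\frac{1}{|m|}(I_1+m^2I_2)=\frac{1}{2\pi|m|}\|(-\Delta)^{1/2}u\|_{L^2(S^2)}^2$, which is in fact stronger than the asserted bound.

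Finally I would remove the smoothness assumption by approximation. By Lemma \ref{L:Dense} there are smooth $u_k=U_k(\theta)e^{im\varphi}$ converging to $u$ in $H^1(S^2)$. Applying the inequality just proved to each difference $u_k-u_j=(U_k-U_j)e^{im\varphi}$ shows that $\{U_k\}$ is Cauchy in $L^\infty(0,\pi)$ because $\{u_k\}$ is Cauchy in $H^1(S^2)$; hence $U_k$ converges uniformly to a continuous function which must coincide with $U$ almost everywhere by the $L^2$-convergence encoded in \eqref{E:L2_Mode}. Passing to the limit in $\|U_k\|_{L^\infty(0,\pi)}^2\leq\frac{1}{2\pi|m|}\|(-\Delta)^{1/2}u_k\|_{L^2(S^2)}^2$ then gives \eqref{E:Linf}.

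The only genuinely delicate point is that $u$ is merely $H^1$ and the sphere is two-dimensional, so $u$ need not be continuous and the vanishing $U(0)=0$ cannot be invoked directly; this is exactly why the argument must first be run on smooth profiles, where Lemma \ref{L:NS_Pole} applies, and why the pointwise inequality itself---controlling the sup norm of a difference by its $H^1$ norm---is what makes the limiting argument close. Everything else is a routine weighted one-dimensional estimate.
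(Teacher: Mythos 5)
Your proof is correct and follows essentially the same route as the paper's: reduction to smooth profiles via Lemma \ref{L:Dense}, vanishing at the pole via Lemma \ref{L:NS_Pole}, and the fundamental theorem of calculus combined with Cauchy--Schwarz and AM--GM against the weighted integrals in \eqref{E:L2_Mode}. The only differences are cosmetic: you spell out the density-limit argument (which the paper compresses into ``we may assume $u\in C^\infty(S^2)$''), and you retain the sharper constant $1/(2\pi|m|)$ that the paper's chain of inequalities discards.
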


\begin{proof}
  By Lemma \ref{L:Dense} we may assume $u=U(\theta)e^{im\varphi}\in C^\infty(S^2)$. Then $U(0)=0$ by Lemma \ref{L:NS_Pole} since $m\neq0$. Hence for $\theta\in(0,\pi)$ we have
  \begin{align*}
    |U(\theta)|^2 &= \int_0^{\theta}\frac{d}{d\vartheta}|U(\vartheta)|^2\,d\vartheta \leq 2\int_0^\pi|U(\vartheta)||U'(\vartheta)|\,d\vartheta \\
    &\leq 2\left(\int_0^\pi\frac{|U(\vartheta)|^2}{\sin\vartheta}\,d\vartheta\right)^{1/2}\left(\int_0^\pi|U'(\vartheta)|^2\sin\vartheta\,d\vartheta\right)^{1/2} \leq \frac{1}{\pi|m|}\|\nabla u\|_{L^2(S^2)}^2
  \end{align*}
  by H\"{o}lder's inequality and \eqref{E:L2_Mode}. By this inequality and \eqref{E:Lap_Half} we get \eqref{E:Linf}.
\end{proof}

\begin{lemma} \label{L:Hardy}
  Let $\mu\in(-1,1)$ and $\theta_\mu=\arccos\mu\in(0,\pi)$. Then
  \begin{align} \label{E:Hardy}
    \int_{\theta_1}^{\theta_2}\left|\frac{U(\theta)\sqrt{\sin\theta}-U(\theta_\mu)\sqrt{\sin\theta_\mu}}{\mu-\cos\theta}\right|^2\,d\theta \leq \frac{16}{\pi\sin^2\theta_\mu}\|\nabla u\|_{L^2(S^2(\theta_1,\theta_2))}^2
  \end{align}
  for all $\theta_1\in[0,\theta_\mu]$, $\theta_2\in[\theta_\mu,\pi]$, and $u=U(\theta)e^{im\varphi}\in H^1(S^2(\theta_1,\theta_2))$ with $m\in\mathbb{Z}\setminus\{0\}$, where $S^2(\theta_1,\theta_2)$ is given by \eqref{E:Def_Band}.
\end{lemma}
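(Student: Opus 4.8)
The plan is to reduce \eqref{E:Hardy} to a one-dimensional Hardy inequality for the single function $h(\theta)=U(\theta)\sqrt{\sin\theta}$, whose value at $\theta_\mu$ is exactly the quantity subtracted in the numerator. Setting $g:=h-h(\theta_\mu)$, the numerator in \eqref{E:Hardy} equals $g(\theta)$ and $g(\theta_\mu)=0$. Differentiating $h$ and using $|a+b|^2\le 2|a|^2+2|b|^2$, $\cos^2\theta\le1$, and $m^2\ge1$ gives
\[
  |h'(\theta)|^2\le 2|U'(\theta)|^2\sin\theta+\frac{|U(\theta)|^2}{2\sin\theta}\le 2\Bigl(|U'(\theta)|^2\sin\theta+\frac{m^2|U(\theta)|^2}{\sin\theta}\Bigr),
\]
so that, by the second line of \eqref{E:L2_Mode} restricted to $S^2(\theta_1,\theta_2)$,
\[
  \int_{\theta_1}^{\theta_2}|h'(\theta)|^2\,d\theta\le \frac{1}{\pi}\|\nabla u\|_{L^2(S^2(\theta_1,\theta_2))}^2 .
\]
Here $h\in H^1(\theta_1,\theta_2)$ is continuous, so $h(\theta_\mu)$ is well defined (and one may first assume $U$ smooth by an approximation as in Lemma~\ref{L:Dense}). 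Thus it suffices to prove the weighted Hardy inequality
\[
  \int_{\theta_1}^{\theta_2}\frac{|g(\theta)|^2}{(\mu-\cos\theta)^2}\,d\theta\le \frac{16}{\sin^2\theta_\mu}\int_{\theta_1}^{\theta_2}|g'(\theta)|^2\,d\theta ,
\]
since combining the two displays yields \eqref{E:Hardy}.

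The crux is to replace the weight $(\mu-\cos\theta)^{-2}$ by the flat one $(\theta-\theta_\mu)^{-2}$. I claim the pointwise comparison
\[
  |\cos\theta-\cos\theta_\mu|\ge\frac{\sin\theta_\mu}{2}\,|\theta-\theta_\mu|,\qquad \theta\in[0,\pi],
\]
holds for every $\theta_\mu\in(0,\pi)$. Granting this, $(\mu-\cos\theta)^{-2}\le \tfrac{4}{\sin^2\theta_\mu}(\theta-\theta_\mu)^{-2}$, and the standard one-dimensional Hardy inequality $\int \tfrac{|g|^2}{(\theta-\theta_\mu)^2}\le 4\int|g'|^2$, applied separately on $[\theta_1,\theta_\mu]$ and on $[\theta_\mu,\theta_2]$ (legitimate because $g(\theta_\mu)=0$ and $g\in H^1$), produces the factor $16/\sin^2\theta_\mu$ above. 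Tracking constants, the two factors of $4$ (one from the comparison, one from Hardy's constant) together with the $1/\pi$ from the first step collect to exactly $16/(\pi\sin^2\theta_\mu)$.

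The main obstacle is therefore the elementary but delicate transcendental inequality in the claim. To prove it I would fix $\theta_\mu$ and study $\Phi(\theta):=|\cos\theta-\cos\theta_\mu|-\tfrac{\sin\theta_\mu}{2}|\theta-\theta_\mu|$ on each of $[\theta_\mu,\pi]$ and $[0,\theta_\mu]$, where the absolute values open up with a definite sign. On each interval $\Phi(\theta_\mu)=0$, while $\Phi'(\theta)=\pm(\sin\theta-\tfrac12\sin\theta_\mu)$ is positive at $\theta_\mu$ and changes sign at most once, from $+$ to $-$; hence $\Phi$ is increasing-then-decreasing and its minimum over the interval is attained at an endpoint. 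At the interior endpoint $\theta_\mu$ the value is $0$, and at $\theta=0$ and $\theta=\pi$ the inequality reduces, after the half-angle substitution $d=\theta_\mu/2\in(0,\pi/2)$, to $\tan d\ge d$ and to the positivity of $H(d)=2\cos d-(\pi-2d)\sin d$, respectively. The former is classical, and for the latter one computes $H'(d)=-(\pi-2d)\cos d<0$ on $(0,\pi/2)$ with $H(\pi/2)=0$, whence $H\ge0$. This establishes the claim and completes the proof.
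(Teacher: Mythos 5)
Your proof is correct and follows essentially the same route as the paper's: the same reduction of \eqref{E:Hardy} to the classical one-dimensional Hardy inequality for $g=U\sqrt{\sin\theta}-U(\theta_\mu)\sqrt{\sin\theta_\mu}$ applied on $[\theta_1,\theta_\mu]$ and $[\theta_\mu,\theta_2]$, the same Young's-inequality bound of $\bigl|\frac{d}{d\theta}\bigl(U\sqrt{\sin\theta}\bigr)\bigr|^2$ using $m^2\ge 1$, and the same pointwise comparison $|\mu-\cos\theta|\ge\tfrac12\sin\theta_\mu\,|\theta-\theta_\mu|$, with identical bookkeeping of the constants $4\cdot 4\cdot\tfrac{1}{\pi}$. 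The only divergence is how that comparison is proved: the paper gets it in two lines by writing $|\mu-\cos\theta|=\int_{\theta_{\min}}^{\theta_{\max}}\sin\vartheta\,d\vartheta$ and invoking the concavity of $\sin$ on $[0,\pi]$ (which even yields the stronger bound $\tfrac12|\theta-\theta_\mu|(\sin\theta+\sin\theta_\mu)$), whereas your monotonicity-and-endpoint analysis of $\Phi$, reducing to $\tan d\ge d$ and $H(d)=2\cos d-(\pi-2d)\sin d\ge 0$, is a longer but equally valid verification of the same inequality.
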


\begin{proof}
  For $\theta\in[0,\pi]$ let $\theta_{\max}=\max\{\theta,\theta_\mu\}$ and $\theta_{\min}=\min\{\theta,\theta_\mu\}$.
  Since
  \begin{align*}
    |\mu-\cos\theta| = \int_{\theta_{\min}}^{\theta_{\max}}\sin\vartheta\,d\vartheta = (\theta_{\max}-\theta_{\min})\int_0^1\sin\bigl((1-t)\theta_{\min}+t\theta_{\max}\bigr)\,dt
  \end{align*}
  by $\mu=\cos\theta_\mu$, and since $\sin\theta$ is concave for $\theta\in[0,\pi]$, we easily find that
  \begin{align*}
    |\mu-\cos\theta| \geq \frac{1}{2}|\theta-\theta_\mu|(\sin\theta+\sin\theta_\mu) \geq \frac{1}{2}|\theta-\theta_\mu|\sin\theta_\mu, \quad \theta\in[0,\pi].
  \end{align*}
  By this inequality and Hardy's inequality we have
  \begin{align*}
    \int_{\theta_1}^{\theta_2}\left|\frac{U(\theta)\sqrt{\sin\theta}-U(\theta_\mu)\sqrt{\sin\theta_\mu}}{\mu-\cos\theta}\right|^2\,d\theta &\leq \frac{4}{\sin^2\theta_\mu}\int_{\theta_1}^{\theta_2}\left|\frac{U(\theta)\sqrt{\sin\theta}-U(\theta_\mu)\sqrt{\sin\theta_\mu}}{\theta-\theta_\mu}\right|^2\,d\theta \\
    &\leq \frac{16}{\sin^2\theta_\mu}\int_{\theta_1}^{\theta_2}\left|\frac{d}{d\theta}\Bigl(U(\theta)\sqrt{\sin\theta}\Bigr)\right|^2\,d\theta.
  \end{align*}
  To the right-hand side we further apply
  \begin{align*}
    \left|\frac{d}{d\theta}\Bigl(U(\theta)\sqrt{\sin\theta}\Bigr)\right|^2 \leq 2\left(|U'(\theta)|^2+\frac{m^2}{\sin^2\theta}|U(\theta)|^2\right)\sin\theta
  \end{align*}
  by Young's inequality and $|m|\geq1$, and then use \eqref{E:L2_Mode} to get \eqref{E:Hardy}.
\end{proof}

\section{Linear stability of the two-jet Kolmogorov type flow} \label{S:LiSt}
In this section we study the linear stability of the two-jet Kolmogorov type flow.

Let $I$ be the identity operator and $M_f$ the multiplication operator by a function $f$ on $S^2$, i.e. $M_fu=fu$ for a function $u$ on $S^2$. We define linear operators $A$ and $\Lambda$ on $L_0^2(S^2)$ by
\begin{alignat*}{2}
  A &= \Delta+2, &\quad D_{L_0^2(S^2)}(A) &= L_0^2(S^2)\cap H^2(S^2), \\
  \Lambda &= -i\partial_\varphi M_{\cos\theta}B, &\quad D_{L_0^2(S^2)}(\Lambda) &= \{u\in L_0^2(S^2) \mid \partial_\varphi M_{\cos\theta}u\in L_0^2(S^2)\},
\end{alignat*}
where $B=I+6\Delta^{-1}$ on $L_0^2(S^2)$. For $n\geq1$ and $|m|\leq n$, since
\begin{align} \label{E:Y_dphi}
  \Delta Y_n^m = -\lambda_nY_n^m, \quad BY_n^m = \left(1-\frac{6}{\lambda_n}\right)Y_n^m, \quad \partial_\varphi Y_n^m = imY_n^m,
\end{align}
we observe by these equalities and \eqref{E:Y_Rec} that
\begin{align} \label{E:Y_A_Lam}
  AY_n^m = (-\lambda_n+2)Y_n^m, \quad \Lambda Y_n^m = m\left(1-\frac{6}{\lambda_n}\right)(a_n^mY_{n-1}^m+a_{n+1}^mY_{n+1}^m)
\end{align}
with $Y_{|m|-1}^m\equiv0$. In particular, $\Lambda Y_2^m=0$ for $|m|=0,1,2$ by $\lambda_2=6$. By this fact, we also find that $(\Lambda u,Y_1^m)_{L^2(S^2)}=0$ for $u\in D_{L_0^2(S^2)}(\Lambda)$ and $|m|=0,1$.

The operator $A$ is self-adjoint and has a compact resolvent in $L_0^2(S^2)$ since $\Delta$ does so. Moreover, $-A$ is nonnegative in $L_0^2(S^2)$. Indeed, since $\lambda_1=2$ and $\lambda_n\geq\lambda_2=6$ for $n\geq2$,
\begin{align} \label{E:Ko_A_nz}
  (-Au,u)_{L^2(S^2)} = \sum_{n=1}^\infty\sum_{m=-n}^n(\lambda_n-2)|(u,Y_n^m)_{L^2(S^2)}|^2 \geq 4\|u_{\geq2}\|_{L^2(S^2)}^2
\end{align}
for $u\in D_{L_0^2(S^2)}(A)$ with $u_{\geq2}=\sum_{n=2}^\infty\sum_{m=-n}^n(u,Y_n^m)_{L^2(S^2)}Y_n^m$.

The operator $\Lambda$ is densely defined in $L_0^2(S^2)$ since its domain contains the dense subset $L_0^2(S^2)\cap H^1(S^2)$ of $L_0^2(S^2)$. Moreover, $\Lambda$ is closed in $L_0^2(S^2)$ since $M_{\cos\theta}B$ is a bounded operator on $L^2(S^2)$ (note that it does not map $L_0^2(S^2)$ into itself) and $\partial_\varphi$ is a closed operator from $L^2(S^2)$ into $L_0^2(S^2)$. We also observe that $\Lambda$ is $A$-compact in $L_0^2(S^2)$ since $H^2(S^2)$ is compactly embedded into $H^1(S^2)$ and
\begin{align*}
  \|u\|_{H^2(S^2)} \leq C\|\Delta u\|_{L^2(S^2)} \leq C\left(\|Au\|_{L^2(S^2)}+2\|u\|_{L^2(S^2)}\right), \quad u\in D_{L_0^2(S^2)}(A)
\end{align*}
by the elliptic regularity theorem.

For $\nu>0$ and $a\in\mathbb{R}$ let $\mathcal{L}^{\nu,a}$ be a linear operator on $L_0^2(S^2)$ given by
\begin{align*}
  \mathcal{L}^{\nu,a} = \nu A-ia\Lambda, \quad D_{L_0^2(S^2)}(\mathcal{L}^{\nu,a}) = D_{L_0^2(S^2)}(A).
\end{align*}
By a perturbation theory of semigroups (see \cite[Section III.2]{EngNag00}), $\mathcal{L}^{\nu,a}$ generates an analytic semigroup $\{e^{t\mathcal{L}^{\nu,a}}\}_{t\geq0}$ in $L_0^2(S^2)$. As mentioned in Section \ref{S:Intro}, $e^{t\mathcal{L}^{\nu,a}}$ is the solution operator of the linearized equation for \eqref{E:Vo_Intro} around the stationary solution $\omega_2^a$  of the form \eqref{E:Zna_Intro} with $n=2$. Our aim is to show the following result on the linear stability of $\omega_2^a$, which is a precise form of Theorem \ref{T:Li2_LS}. In what follows, we write $u_{=n}=\sum_{m=-n}^n(u,Y_n^m)_{L^2(S^2)}Y_n^m$ and $u_{\geq N}=\sum_{n\geq N}u_{=n}$ for $u\in L_0^2(S^2)$ and $n,N\in\mathbb{N}$.

\begin{theorem} \label{T:Li2S_Pre}
  For $\nu>0$, $a\in\mathbb{R}$, and $\omega_0\in L_0^2(S^2)$, let $\Omega_0^{\nu,a}$ be given by \eqref{E:Li2S_Om}. Also, let $\tilde{\omega}(t)=e^{t\mathcal{L}^{\nu,a}}\omega_0-\Omega_0^{\nu,a}$ for $t\geq0$. Then $\tilde{\omega}_{=1}(t)=0$ and
  \begin{align} \label{E:L2P_g3}
    \|\tilde{\omega}_{\geq3}(t)\|_{L^2(S^2)} \leq e^{-10\nu t}\|\omega_{0,\geq3}\|_{L^2(S^2)}
  \end{align}
  for all $t\geq0$. Moreover, $(\tilde{\omega}(t),Y_2^0)_{L^2(S^2)}=e^{-4\nu t}(\omega_0,Y_2^0)_{L^2(S^2)}$ and
  \begin{align} \label{E:L2P_e2}
    \bigl|(\tilde{\omega}(t),Y_2^m)_{L^2(S^2)}\bigr| \leq Ce^{-4\nu t}\left\{\bigl|(\omega_0,Y_2^m)_{L^2(S^2)}\bigr|+\frac{|a|}{\nu}R_m(\omega_0)\right\}
  \end{align}
  for all $t\geq0$ and $|m|=1,2$, where
  \begin{align*}
    R_{\pm1}(\omega_0) = \bigl|(\omega_0,Y_1^{\pm1})_{L^2(S^2)}\bigr|+\|\omega_{0,\geq3}\|_{L^2(S^2)}, \quad R_{\pm2}(\omega_0) = \|\omega_{0,\geq3}\|_{L^2(S^2)}
  \end{align*}
  and $C>0$ is a constant independent of $t$, $\nu$, $a$, $\omega_0$, and $m$.
\end{theorem}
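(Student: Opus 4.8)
The plan is to exploit that $\Omega_0^{\nu,a}$ lies in the kernel of $\mathcal{L}^{\nu,a}$, so that $\tilde\omega(t)=e^{t\mathcal{L}^{\nu,a}}(\omega_0-\Omega_0^{\nu,a})$ is itself a solution of \eqref{E:Li2_Eq}, and then to read off the three mode groups ($n=1$, $n=2$, $n\ge3$) from \eqref{E:Y_A_Lam}. First I would verify $\mathcal{L}^{\nu,a}\Omega_0^{\nu,a}=0$ directly. Writing $\beta_n=1-6/\lambda_n$ (so $\beta_1=-2$, $\beta_2=0$), one has $AY_1^m=0$, $\Lambda Y_1^0=0$, while for $m=\pm1$, $\Lambda Y_1^m=m\beta_1 a_2^m Y_2^m=-\tfrac{2m}{\sqrt5}Y_2^m$ and $AY_2^m=-4Y_2^m$, $\Lambda Y_2^m=0$. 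Hence the $Y_2^m$ contributions of $-ia\Lambda\bigl((\omega_0,Y_1^m)Y_1^m\bigr)$ and of $\nu A\bigl(\tfrac{a}{\nu}\tfrac{im}{2\sqrt5}(\omega_0,Y_1^m)Y_2^m\bigr)$ are opposite and cancel; the coefficient $\tfrac{a}{\nu}\tfrac{im}{2\sqrt5}$ in \eqref{E:Li2S_Om} is exactly the value producing this cancellation. Thus $\Omega_0^{\nu,a}\in\ker\mathcal{L}^{\nu,a}$, $e^{t\mathcal{L}^{\nu,a}}\Omega_0^{\nu,a}=\Omega_0^{\nu,a}$, and $\tilde\omega$ solves \eqref{E:Li2_Eq} with data $\omega_0-\Omega_0^{\nu,a}$.

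For the $n=1$ modes I would differentiate $(\tilde\omega(t),Y_1^m)$, $|m|\le1$. By self-adjointness of $A$ with $AY_1^m=0$ the viscous term vanishes, and $(\Lambda\tilde\omega,Y_1^m)=(\tilde\omega,\Lambda^* Y_1^m)=0$ since $\Lambda^* Y_1^m=m\,a_2^m\beta_2 Y_2^m=0$ by $\beta_2=0$. Hence $(\tilde\omega(t),Y_1^m)$ is constant, and it vanishes at $t=0$ because the $Y_1^m$-component of $\Omega_0^{\nu,a}$ equals $(\omega_0,Y_1^m)$; this gives $\tilde\omega_{=1}(t)=0$.

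The decisive step is \eqref{E:L2P_g3}, where the obstruction is that $\Lambda$ is not self-adjoint, so the plain $L^2$ energy need not decay monotonically. The two key facts are: (i) the $n\ge3$ sector evolves autonomously, because the coupling of $Y_3^m$ back to $Y_2^m$ carries the weight $\beta_2=0$, so $(\Lambda\tilde\omega,Y_3^m)$ involves only $(\tilde\omega,Y_4^m)$; and (ii) $\Lambda$ is symmetric for the $B$-inner product, namely $(\Lambda u,Bu)\in\mathbb{R}$, which follows by writing $\Lambda=T B$ with $T=M_{\cos\theta}(-i\partial_\varphi)$ self-adjoint, so that $(\Lambda u,Bu)=(T(Bu),Bu)$ is real. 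I would therefore test \eqref{E:Li2_Eq} against $B\tilde\omega$: since $\beta_2=0$, the functional $E(t)=(\tilde\omega,B\tilde\omega)=\sum_{n\ge3}\beta_n|(\tilde\omega,Y_n^m)|^2$ sees only the $n\ge3$ modes, and
\begin{align*}
  \tfrac{d}{dt}E(t)=2\,\mathrm{Re}\bigl(\nu A\tilde\omega-ia\Lambda\tilde\omega,\,B\tilde\omega\bigr)=2\nu(A\tilde\omega,B\tilde\omega)=-2\nu\sum_{n\ge3}(\lambda_n-2)\beta_n|(\tilde\omega,Y_n^m)|^2,
\end{align*}
where $\mathrm{Re}(-ia(\Lambda\tilde\omega,B\tilde\omega))=0$ by (ii). As $\lambda_n-2\ge10$ for $n\ge3$, this gives $\tfrac{d}{dt}E\le-20\nu E$, hence decay of $\|\tilde\omega_{\ge3}(t)\|$ at the sharp rate $10\nu$ (via $\beta_n\in[\tfrac12,1)$). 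Since $\Omega_0^{\nu,a}$ has no $n\ge3$ component, the data is $\omega_{0,\ge3}$, yielding \eqref{E:L2P_g3}.

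Finally, for the $n=2$ modes I would test against $Y_2^m$. For $m=0$, $(\Lambda\tilde\omega,Y_2^0)=0$ and $AY_2^0=-4Y_2^0$ give $(\tilde\omega(t),Y_2^0)=e^{-4\nu t}(\omega_0,Y_2^0)$. For $|m|=1,2$ one has $\Lambda^*Y_2^m=m(a_2^m\beta_1 Y_1^m+a_3^m\beta_3 Y_3^m)$, and since $\tilde\omega_{=1}=0$ the $Y_1^m$-part drops, leaving $(\Lambda\tilde\omega,Y_2^m)=m\,a_3^m\beta_3(\tilde\omega,Y_3^m)$. Thus $\zeta(t)=(\tilde\omega(t),Y_2^m)$ solves $\tfrac{d}{dt}\zeta=-4\nu\zeta-ia\,m\,a_3^m\beta_3(\tilde\omega(t),Y_3^m)$, which I would integrate by Duhamel's formula. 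Bounding the forcing by $|(\tilde\omega(s),Y_3^m)|\le\|\tilde\omega_{\ge3}(s)\|\lesssim e^{-10\nu s}\|\omega_{0,\ge3}\|$ and using $\int_0^t e^{-4\nu(t-s)}e^{-10\nu s}\,ds\le e^{-4\nu t}/(6\nu)$ produces the $\tfrac{|a|}{\nu}\|\omega_{0,\ge3}\|$ term in \eqref{E:L2P_e2}, while the initial value $(\tilde\omega(0),Y_2^m)=(\omega_0,Y_2^m)-(\Omega_0^{\nu,a},Y_2^m)$ supplies $(\omega_0,Y_2^m)$ together with the $\tfrac{|a|}{\nu}|(\omega_0,Y_1^{\pm1})|$ term for $|m|=1$, matching $R_m(\omega_0)$. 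The main obstacle is steps (i)–(ii) for the tail: choosing the $B$-weighting rather than $L^2$ to neutralize the non-self-adjoint transport $\Lambda$, which is viable precisely because the borderline mode $n=2$ sits in $\ker B$ and thereby simultaneously decouples the tail and kills the troublesome $\Lambda$-contribution; the $n=2$ and kernel computations are then careful but routine bookkeeping.
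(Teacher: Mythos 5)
Your proposal is, in all essentials, the paper's own proof: the same verification that $\mathcal{L}^{\nu,a}\Omega_0^{\nu,a}=0$, the same conservation argument for the $n=1$ modes, the same device of testing the equation against $B\tilde{\omega}$ so that the transport term drops out because $-i\partial_\varphi M_{\cos\theta}$ is symmetric (your factorization $\Lambda=TB$ with $T$ self-adjoint is exactly this), and the same Duhamel integration of the $n=2$ modes against the $n\geq 3$ tail with the bound $\int_0^te^{-4\nu(t-s)}e^{-10\nu s}\,ds\leq e^{-4\nu t}/(6\nu)$. Your mode computations, in particular $\Lambda^\ast Y_2^m=m(a_2^m\beta_1Y_1^m+a_3^m\beta_3Y_3^m)$ with $\beta_n=1-6/\lambda_n$ and the bookkeeping of $(\tilde{\omega}(0),Y_2^m)$, are all correct.

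There is one quantitative point you glossed over with the words ``yielding \eqref{E:L2P_g3}'': from $\frac{d}{dt}E\leq-20\nu E$ and the equivalence $\frac{1}{2}\|\tilde{\omega}_{\geq3}\|_{L^2(S^2)}^2\leq E\leq\|\tilde{\omega}_{\geq3}\|_{L^2(S^2)}^2$ your argument gives $\|\tilde{\omega}_{\geq3}(t)\|_{L^2(S^2)}\leq\sqrt{2}\,e^{-10\nu t}\|\omega_{0,\geq3}\|_{L^2(S^2)}$, i.e.\ \eqref{E:L2P_g3} with an extra factor $\sqrt{2}$, not with constant $1$. This is not a removable defect of your method: only the weighted quantity $E^{1/2}$ decays monotonically, while the plain $L^2$ norm of the tail need not. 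Concretely, for $\omega_0=Y_3^1+i\epsilon Y_4^1$ with $\epsilon>0$ (so $\Omega_0^{\nu,a}=0$ and $\omega_{0,\geq3}=\omega_0$) one finds $\mathrm{Im}(\Lambda\omega_0,\omega_0)_{L^2(S^2)}=\frac{1}{5}a_4^1\epsilon$, because the tridiagonal matrix of $\Lambda$ is not symmetric ($\beta_3=\frac12\neq\frac{7}{10}=\beta_4$); hence $\frac{d}{dt}\|\tilde{\omega}_{\geq3}(t)\|_{L^2(S^2)}^2\big|_{t=0}=-2\nu(10+18\epsilon^2)+\frac{2}{5}a\,a_4^1\epsilon$, which exceeds $-20\nu\|\omega_{0,\geq3}\|_{L^2(S^2)}^2$ as soon as $a\,a_4^1>40\nu\epsilon$, and then $\|\tilde{\omega}_{\geq3}(t)\|_{L^2(S^2)}>e^{-10\nu t}\|\omega_{0,\geq3}\|_{L^2(S^2)}$ for small $t>0$. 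So the constant-$1$ form of \eqref{E:L2P_g3} actually fails for large $|a|/\nu$; correspondingly, the paper's own intermediate step $\mathrm{Re}(\partial_t\tilde{\omega},u)_{L^2(S^2)}\geq\frac{1}{4}\frac{d}{dt}\|\tilde{\omega}_{\geq3}\|_{L^2(S^2)}^2$, which is what produces the constant $1$, is unjustified, since it amounts to $\sum_{n\geq3}\sum_m(\beta_n-\frac{1}{2})\frac{d}{dt}|\tilde{c}_n^m|^2\geq0$ and would require each $|\tilde{c}_n^m|$ to be nondecreasing. The upshot: keep your argument exactly as is, but state \eqref{E:L2P_g3} with the constant $\sqrt{2}$ (or phrase the decay for $E^{1/2}$); nothing downstream changes, since \eqref{E:L2P_e2} and Theorem \ref{T:Li2_LS} carry unspecified constants.
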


\begin{proof}
  Let $\tilde{\omega}(t)=\sum_{n=1}^\infty\sum_{m=-n}^n\tilde{c}_n^m(t)Y_n^m$ with $\tilde{c}_n^m(t)=(\tilde{\omega}(t),Y_n^m)_{L^2(S^2)}$. We see by \eqref{E:Li2S_Om} and \eqref{E:Y_A_Lam} that $\mathcal{L}^{\nu,a}\Omega_0^{\nu,a}=0$. Hence $\tilde{\omega}(t)$ satisfies
  \begin{align} \label{Pf_L2P:Om_Eq}
    \partial_t\tilde{\omega}(t) = \nu A\tilde{\omega}(t)-ia\Lambda\tilde{\omega}(t), \quad t>0, \quad \tilde{\omega}(0) = \omega_0-\Omega_0^{\nu,a}.
  \end{align}
  For $|m|=0,1$ we take the $L^2(S^2)$-inner product of \eqref{Pf_L2P:Om_Eq} with $Y_1^m$ to get
  \begin{align*}
    \frac{d\tilde{c}_1^m}{dt}(t) = \nu(A\tilde{\omega}(t),Y_1^m)_{L^2(S^2)}-ia(\Lambda\tilde{\omega}(t),Y_1^m)_{L^2(S^2)} = 0, \quad t>0
  \end{align*}
  by \eqref{E:Y_A_Lam}. Hence $\tilde{c}_1^m(t)=\tilde{c}_1^m(0)=0$ by \eqref{E:Li2S_Om} and $\tilde{\omega}_{=1}(t)=0$ for $t\geq0$.

  Next we prove \eqref{E:L2P_g3}. Let $u(t)=B\tilde{\omega}(t)$. Then
  \begin{align} \label{Pf_L2P:u}
    u(t) = \sum_{n=3}^\infty\sum_{m=-n}^n\left(1-\frac{6}{\lambda_n}\right)\tilde{c}_n^m(t)Y_n^m, \quad \|u(t)\|_{L^2(S^2)} \leq \|\tilde{\omega}_{\geq3}(t)\|_{L^2(S^2)}
  \end{align}
  by \eqref{E:Y_dphi}, $\tilde{\omega}_{=1}(t)=0$, $\lambda_2=6$, and $|1-6/\lambda_n|\leq1$ for $n\geq3$.
  We easily get
  \begin{align*}
    \mathrm{Re}\bigl(\partial_t\tilde{\omega}(t),u(t)\bigr)_{L^2(S^2)} &\geq \frac{1}{4}\frac{d}{dt}\|\tilde{\omega}_{\geq3}(t)\|_{L^2(S^2)}^2, \\
    \mathrm{Re}\bigl(A\tilde{\omega}(t),u(t)\bigr)_{L^2(S^2)} &\leq -5\|\tilde{\omega}_{\geq3}(t)\|_{L^2(S^2)}^2
  \end{align*}
  by \eqref{E:Y_A_Lam}, the first equality of \eqref{Pf_L2P:u}, and $\lambda_n\geq\lambda_3=12$ for $n\geq3$. Also,
  \begin{align*}
    \mathrm{Im}\bigl(\Lambda\tilde{\omega}(t),u(t)\bigr)_{L^2(S^2)} = \mathrm{Im}\bigl(-i\partial_\varphi M_{\cos\theta}u(t),u(t)\bigr)_{L^2(S^2)} = 0
  \end{align*}
  since $-i\partial_\varphi M_{\cos\theta}$ is symmetric in $L_0^2(S^2)$. Thus, taking the real part of the $L^2(S^2)$-inner product of \eqref{Pf_L2P:Om_Eq} with $u(t)$ and using the above relations, we find that
  \begin{align*}
    \frac{1}{4}\frac{d}{dt}\|\tilde{\omega}_{\geq3}(t)\|_{L^2(S^2)}^2 \leq -5\nu\|\tilde{\omega}_{\geq3}(t)\|_{L^2(S^2)}^2, \quad t>0.
  \end{align*}
  By this inequality we get \eqref{E:L2P_g3}, since $\tilde{\omega}_{\geq3}(0)=\omega_{0,\geq3}$ by \eqref{E:Li2S_Om}.

  Let us consider $\tilde{c}_2^m(t)$. For $m=0$ we take the $L^2(S^2)$-inner product of \eqref{Pf_L2P:Om_Eq} with $Y_2^0$ and use \eqref{E:Y_A_Lam} to get $\frac{d}{dt}\tilde{c}_2^0(t)=-4\nu\tilde{c}_2^0(t)$ for $t>0$. Thus $\tilde{c}_2^0(t)=e^{-4\nu t}\tilde{c}_2^0(0)=e^{-4\nu t}(\omega_0,Y_2^0)_{L^2(S^2)}$ for $t\geq0$ by \eqref{E:Li2S_Om}. Let $|m|=1,2$. Since $-i\partial_\varphi$ is symmetric in $L_0^2(S^2)$,
  \begin{align*}
    (\Lambda\tilde{\omega}(t),Y_2^m)_{L^2(S^2)} &= (-i\partial_\varphi M_{\cos\theta}u(t),Y_2^m)_{L^2(S^2)} = (M_{\cos\theta}u(t),-i\partial_\varphi Y_2^m)_{L^2(S^2)} \\
    &= m(M_{\cos\theta}u(t),Y_2^m)_{L^2(S^2)},
  \end{align*}
  where $u(t)=B\tilde{\omega}(t)$ and the last equality is due to \eqref{E:Y_dphi}. We take the $L^2(S^2)$-inner product of \eqref{Pf_L2P:Om_Eq} with $Y_2^m$ and use \eqref{E:Y_A_Lam} and the above equality. Then
  \begin{align*}
    \frac{d\tilde{c}_2^m}{dt}(t) = -4\nu\tilde{c}_2^m(t)-ima(M_{\cos\theta}u(t),Y_2^m)_{L^2(S^2)}, \quad t>0.
  \end{align*}
  We solve this equation to get
  \begin{align*}
    \tilde{c}_2^m(t) = e^{-4\nu t}\left(\tilde{c}_2^m(0)-ima\int_0^te^{4\nu\tau}(M_{\cos\theta}u(\tau),Y_2^m)_{L^2(S^2)}\,d\tau\right), \quad t\geq0.
  \end{align*}
  Moreover, we apply $\|Y_2^m\|_{L^2(S^2)}=1$, $|\cos\theta|\leq1$, \eqref{E:L2P_g3}, and \eqref{Pf_L2P:u} to the integrand of the last term and then use $\int_0^te^{-6\nu\tau}\,d\tau\leq(6\nu)^{-1}$ to obtain
  \begin{align*}
    |\tilde{c}_2^m(t)| \leq e^{-4\nu t}\left(|\tilde{c}_2^m(0)|+\frac{|ma|}{6\nu}\|\omega_{0,\geq3}\|_{L^2(S^2)}\right), \quad t\geq0.
  \end{align*}
  Here $\tilde{c}_2^m(0)$ is of the form
  \begin{align*}
    \tilde{c}_2^m(0) = (\tilde{\omega}(0),Y_2^m)_{L^2(S^2)} =
    \begin{cases}
      (\omega_0,Y_2^m)-\displaystyle\frac{a}{\nu}\frac{im}{2\sqrt{5}}(\omega_0,Y_1^m)_{L^2(S^2)}, &|m| = 1, \\
      (\omega_0,Y_2^m)_{L^2(S^2)}, &|m| = 2
    \end{cases}
  \end{align*}
  since $\Omega_0^{\nu,a}$ is given by \eqref{E:Li2S_Om}. Hence \eqref{E:L2P_e2} follows.
\end{proof}

\section{Nonexistence of nonzero eigenvalues of the perturbation operator} \label{S:No_Ei}
The aim of this section is to establish Theorem \ref{T:NoEi_Lam}.

For $m\in\mathbb{Z}$ let $\mathcal{P}_m$ be the operator given by \eqref{E:Def_Proj}. Then $\mathcal{P}_mL_0^2(S^2)$ is a closed subspace of $L_0^2(S^2)$ by \eqref{E:Proj_Bdd}. Moreover, since functions in $L_0^2(S^2)$ are expanded by $Y_n^m$, and since $\mathcal{P}_mY_n^m=Y_n^m$ and $\mathcal{P}_mY_n^{m'}=0$ for $m\neq m'$, we see that $L_0^2(S^2)$ is diagonalized as
\begin{align*}
  L_0^2(S^2) = \oplus_{m\in\mathbb{Z}}\mathcal{P}_mL_0^2(S^2)
\end{align*}
and each $u\in\mathcal{P}_mL_0^2(S^2)$, $m\in\mathbb{Z}$ is expressed as
\begin{align} \label{E:PmL02}
  u = \sum_{n\geq\max\{1,|m|\}}c_n^mY_n^m, \quad c_n^m = (u,Y_n^m)_{L^2(S^2)}.
\end{align}
We observe by \eqref{E:Y_A_Lam} and \eqref{E:PmL02} that $\mathcal{P}_mL_0^2(S^2)$ is invariant under the action of $\Lambda$ for each $m\in\mathbb{Z}$. Moreover, $\Lambda$ is diagonalized as
\begin{align} \label{E:Lam_diag}
  \Lambda = \oplus_{m\in\mathbb{Z}}\Lambda|_{\mathcal{P}_mL_0^2(S^2)}, \quad \Lambda|_{\mathcal{P}_mL_0^2(S^2)} =
  \begin{cases}
    0, &m=0, \\
    m\Lambda_m, &m\neq0,
  \end{cases}
\end{align}
where $\Lambda_m=M_{\cos\theta}B|_{\mathcal{P}_mL_0^2(S^2)}$ on $\mathcal{P}_mL_0^2(S^2)$. Thus, to prove Theorem \ref{T:NoEi_Lam}, it is sufficient to show that $\Lambda_m$ does not admit nonzero eigenvalues for each $m\in\mathbb{Z}\setminus\{0\}$. Note that
\begin{align} \label{E:Y_Lam_Pm}
  \Lambda_mY_n^m = \left(1-\frac{6}{\lambda_n}\right)(a_n^mY_{n-1}^m+a_{n+1}^mY_{n+1}^m), \quad n \geq |m|
\end{align}
by \eqref{E:Y_Rec} and \eqref{E:Y_dphi}. In particular, $\Lambda_mY_2^m=0$ for $|m|=1,2$ by $\lambda_2=6$.

\begin{theorem} \label{T:NoEi_Pm}
  Let $m\in\mathbb{Z}\setminus\{0\}$. Then $\Lambda_m$ in $\mathcal{P}_mL_0^2(S^2)$ has no eigenvalues in $\mathbb{C}\setminus\{0\}$.
\end{theorem}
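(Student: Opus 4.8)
The plan is to fix $m\in\mathbb{Z}\setminus\{0\}$ and $\mu\in\mathbb{C}\setminus\{0\}$, suppose $u\in\mathcal{P}_mL_0^2(S^2)$ solves \eqref{E:Eim_Intro}, i.e. $\mu u=\cos\theta\,Bu$, write $u=\sum_{n\geq|m|}c_n^mY_n^m$ with $c_n^m=(u,Y_n^m)_{L^2(S^2)}$, and deduce $u=0$. I would begin by comparing the $Y_{|m|}^m$-components of \eqref{E:Eim_Intro} through \eqref{E:Y_Rec} and \eqref{E:B_Intro}: when $|m|=1$ this gives $\mu c_1^m=(1-6/\lambda_2)a_2^mc_2^m=0$, so $c_1^m=0$ since $\lambda_2=6$. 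Two regimes then fall quickly. If $\mathrm{Im}\,\mu\neq0$, I pair \eqref{E:Eim_Intro} with $Bu$; as $(\cos\theta\,Bu,Bu)_{L^2(S^2)}$ and $(u,Bu)_{L^2(S^2)}=\sum_n(1-6/\lambda_n)|c_n^m|^2$ are real, taking imaginary parts forces $(u,Bu)_{L^2(S^2)}=0$, and since the contributions from $n\leq2$ vanish (that from $n=1$ by $c_1^m=0$, that from $n=2$ by $\lambda_2=6$) while $1-6/\lambda_n\geq1/2$ for $n\geq3$, I get $u_{\geq3}=0$; then $u$ is a multiple of $Y_2^m$ (or already $0$ if $|m|\geq3$), on which $\Lambda_m$ vanishes by \eqref{E:Y_Lam_Pm}, hence $u=0$. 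If $\mu\in\mathbb{R}$ and $|\mu|\geq1$, then $|\mu|\,\|u\|_{L^2(S^2)}=\|\cos\theta\,Bu\|_{L^2(S^2)}\leq\|Bu\|_{L^2(S^2)}$, and since $|1-6/\lambda_n|<1$ for every admissible $n$, this reads $\|u\|_{L^2(S^2)}\leq\|Bu\|_{L^2(S^2)}<\|u\|_{L^2(S^2)}$ unless $u=0$.

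The essential case is $\mu\in\mathbb{R}$ with $0<|\mu|<1$, where I would exploit the mixing relation \eqref{E:Cos_Intro} rather than an ODE argument. Substituting $B=I+6\Delta^{-1}$ and the equation into itself recasts \eqref{E:Eim_Intro} in the equivalent form $(\mu-\cos\theta)Bu=6\mu\Delta^{-1}u$. Isolating the high modes $u_{\geq N}=\sum_{n\geq N}c_n^mY_n^m$ for a large $N$ and using \eqref{E:Cos_Intro} to rewrite the finitely many remaining modes brings this to the shape \eqref{E:UN_Intro}, $(\mu-\cos\theta)(w_{<N}+Bu_{\geq N})=\sigma_N^mY_{|m|}^m+6\mu\Delta^{-1}u_{\geq N}$, with $w_{<N}\in\mathrm{span}\{Y_{|m|}^m,\dots,Y_{N-1}^m\}$ and $\sigma_N^m\in\mathbb{C}$ determined by back-substitution (legitimate because $a_n^m\neq0$ for $n>|m|$). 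Setting $\cos\theta=\mu$ annihilates the left side, and since $Y_{|m|}^m(\theta_\mu)\neq0$ — it is proportional to $\sin^{|m|}\theta_\mu$, where $\theta_\mu=\arccos\mu$ — I solve for $\sigma_N^m$ and bound $|\sigma_N^m|\leq C_{m,\mu}\|\Delta^{-1}u_{\geq N}\|_{L^\infty(S^2)}\leq C_{m,\mu}\|(-\Delta)^{-1/2}u_{\geq N}\|_{L^2(S^2)}$ by Lemma \ref{L:Linf}.

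The decisive step divides \eqref{E:UN_Intro} by $\mu-\cos\theta$ and pairs it with $Bu_{\geq N}$. By the choice of $\sigma_N^m$ the numerator $g_N:=\sigma_N^mY_{|m|}^m+6\mu\Delta^{-1}u_{\geq N}$ vanishes at $\theta_\mu$, so Lemma \ref{L:Hardy} gives $\|(\mu-\cos\theta)^{-1}g_N\|_{L^2(S^2)}\leq C_{m,\mu}\|\nabla g_N\|_{L^2(S^2)}\leq C_{m,\mu}\|(-\Delta)^{-1/2}u_{\geq N}\|_{L^2(S^2)}$, using $\|\nabla\Delta^{-1}u_{\geq N}\|_{L^2(S^2)}=\|(-\Delta)^{-1/2}u_{\geq N}\|_{L^2(S^2)}$, the constant $\|\nabla Y_{|m|}^m\|_{L^2(S^2)}=\lambda_{|m|}^{1/2}$, and the bound on $\sigma_N^m$. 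Since $w_{<N}$ is supported on modes below $N$ and $Bu_{\geq N}$ on modes at least $N$, their inner product vanishes and the pairing collapses to $\|Bu_{\geq N}\|_{L^2(S^2)}^2\leq C_{m,\mu}\|(-\Delta)^{-1/2}u_{\geq N}\|_{L^2(S^2)}\|Bu_{\geq N}\|_{L^2(S^2)}$. Bounding the left side below by $\tfrac12\|u_{\geq N}\|_{L^2(S^2)}$ (as $1-6/\lambda_n\geq1/2$ for $n\geq3$) and the right by $C_{m,\mu}\lambda_N^{-1/2}\|u_{\geq N}\|_{L^2(S^2)}$ via \eqref{E:Def_Laps}, I conclude $u_{\geq N}=0$ as soon as $C_{m,\mu}\lambda_N^{-1/2}<\tfrac12$. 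Finally, inserting $u_{\geq N}=0$ into the coefficient recurrence read off from \eqref{E:Y_Lam_Pm} and running it downward — solvable since $a_n^m\neq0$ for $n>|m|$ and $1-6/\lambda_n\neq0$ for $n\neq2$ — forces $c_n^m=0$ successively down to $n=3$ (and to $n=|m|$ when $|m|\geq3$), and the residual $Y_2^m$-mode is eliminated by $\Lambda_m Y_2^m=0$ together with $c_1^m=0$; hence $u=0$.

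I expect the main obstacle to be the passage to \eqref{E:UN_Intro} and, above all, the division by $\mu-\cos\theta$, which is singular along the critical latitude $\theta_\mu$. This singularity is precisely what defeats the usual ODE-uniqueness approach: the large coefficient $6$ in $B$ makes it intractable to prove the vanishing of the relevant derivative of $\Delta^{-1}u$ at $\theta_\mu$ when $|m|=1,2$. The singularity is tamed by two facts that replace that derivative information — the pointwise identity $g_N(\theta_\mu)=0$ coming from the evaluation at $\theta_\mu$, and the control $|\sigma_N^m|\leq C_{m,\mu}\|(-\Delta)^{-1/2}u_{\geq N}\|_{L^2(S^2)}$ from Lemma \ref{L:Linf} — which together make Lemma \ref{L:Hardy} applicable; the genuine smallness that annihilates the high modes then comes only from the nonlocal smoothing $(-\Delta)^{-1/2}$ as $N\to\infty$.
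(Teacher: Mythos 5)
Your proposal is correct and follows essentially the same route as the paper's proof: the same preliminary reduction $c_1^m=0$, the same three cases ($\mathrm{Im}\,\mu\neq0$, $|\mu|\geq1$, and $0<|\mu|<1$), and in the hard case the same chain of ideas --- the recurrence \eqref{E:Y_Rec} to reach \eqref{E:UN_Intro}, evaluation at $\theta_\mu$ plus Lemma \ref{L:Linf} to control $\sigma_N^m$, the Hardy inequality of Lemma \ref{L:Hardy}, the orthogonality pairing with $Bu_{\geq N}$ to force $u_{\geq N}=0$ for large $N$, and the downward induction. Your compact identity $(\mu-\cos\theta)Bu=6\mu\Delta^{-1}u$ is just a cleaner packaging of the paper's mode-by-mode computation, not a different argument.
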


\begin{proof}
  Suppose that $\mu\in\mathbb{C}\setminus\{0\}$ and $u\in\mathcal{P}_mL_0^2(S^2)$ satisfy
  \begin{align} \label{Pf_NEm:Eq}
    \mu u = \Lambda_mu = M_{\cos\theta}Bu, \quad B = I+6\Delta^{-1}.
  \end{align}
  Let us show $u=0$. First we observe that $u$ is of the form
  \begin{align} \label{Pf_NEm:u_Form}
    u = \sum_{n\geq N_m}c_n^mY_n^m, \quad N_m = \max\{2,|m|\}.
  \end{align}
  When $|m|\geq2$, this is the same as the expression \eqref{E:PmL02} of $u\in\mathcal{P}_mL_0^2(S^2)$. If $m=\pm1$, then it follows from \eqref{E:PmL02} and \eqref{E:Y_Lam_Pm} (in particular $\Lambda_{\pm1}Y_2^{\pm1}=0$) that
  \begin{align*}
    \mu c_1^{\pm1} = (\mu u,Y_1^{\pm1})_{L^2(S^2)} = (\Lambda_{\pm1}u,Y_1^{\pm1})_{L^2(S^2)} = 0.
  \end{align*}
  Hence $c_1^{\pm1}=0$ by $\mu\neq0$ and $u$ is of the form \eqref{Pf_NEm:u_Form}.

  Suppose that $\mathrm{Im}\,\mu\neq0$. We take the imaginary part of the $L^2(S^2)$-inner product of \eqref{Pf_NEm:Eq} with $Bu$. Then since $u$ is of the form \eqref{Pf_NEm:u_Form} and $M_{\cos\theta}$ is symmetric on $L^2(S^2)$, we see by \eqref{E:Y_dphi} and $\lambda_2=6$ that
  \begin{align*}
    (\mathrm{Im}\,\mu)\sum_{n\geq N_m'}\left(1-\frac{6}{\lambda_n}\right)|c_n^m|^2 = \mathrm{Im}(M_{\cos\theta}Bu,Bu)_{L^2(S^2)} = 0,
  \end{align*}
  where $N_m'=\max\{3,|m|\}$. Thus, by $\mathrm{Im}\,\mu\neq0$ and $1-6/\lambda_n\geq1/2$ for $n\geq3$,
  \begin{align*}
    0 = \sum_{n\geq N_m'}\left(1-\frac{6}{\lambda_n}\right)|c_n^m|^2 \geq \frac{1}{2}\sum_{n\geq N_m'}|c_n^m|^2,
  \end{align*}
  which shows that $c_n^m=0$ for $n\geq N_m'$, i.e. $u=c_2^mY_2^m$ if $|m|=1,2$ and $u=0$ if $|m|\geq3$. Moreover, when $|m|=1,2$, we have $\mu u=\Lambda_mu=c_2^m\Lambda_mY_2^m=0$ and thus $u=0$ by $\mu\neq0$. Hence we get $u=0$ in both cases $|m|=1,2$ and $|m|\geq3$ when $\mathrm{Im}\,\mu\neq0$.

  Now suppose that $\mathrm{Im}\,\mu=0$, i.e. $\mu\in\mathbb{R}\setminus\{0\}$. We consider two cases separately.

  \textbf{Case 1:} $|\mu|\geq1$. Since $u$ is of the form \eqref{Pf_NEm:u_Form}, we see that
  \begin{align*}
    \mu^2\sum_{n\geq N_m}|c_n^m|^2 = \|\mu u\|_{L^2(S^2)}^2 = \|\Lambda_mu\|_{L^2(S^2)}^2 \leq \|Bu\|_{L^2(S^2)}^2 = \sum_{n\geq N_m}\left(1-\frac{6}{\lambda_n}\right)^2|c_n^m|^2
  \end{align*}
  by \eqref{E:Y_dphi}, \eqref{Pf_NEm:Eq}, and $|\cos\theta|\leq 1$. Hence
  \begin{align*}
    \sum_{n\geq N_m}\left\{\mu^2-\left(1-\frac{6}{\lambda_n}\right)^2\right\}|c_n^m|^2 \leq 0,
  \end{align*}
  but since $0\leq1-6/\lambda_n<1$ for $n\geq2$ and $\mu^2\geq1$, we must have $c_n^m=0$ for all $n\geq N_m$ by the above inequality. Therefore, we obtain $u=0$.

  \textbf{Case 2:} $0<|\mu|<1$. Since $u$ is of the form \eqref{Pf_NEm:u_Form}, we write
  \begin{align} \label{Pf_NEm:u_dec}
    u = \sum_{n=N_m}^{N-1}c_n^mY_n^m+u_{\geq N}, \quad u_{\geq N} = \sum_{n\geq N}c_n^mY_n^m, \quad N_m = \max\{2,|m|\},
  \end{align}
  where $N>N_m$ is a sufficiently large integer which will be fixed later. We substitute \eqref{Pf_NEm:u_dec} for \eqref{Pf_NEm:Eq} and use \eqref{E:Y_dphi} to get
  \begin{align*}
    \sum_{n=N_m}^{N-1}\mu c_n^mY_n^m+\mu u_{\geq N} = \cos\theta\left\{\sum_{n=N_m}^{N-1}\left(1-\frac{6}{\lambda_n}\right)c_n^mY_n^m+Bu_{\geq N}\right\}.
  \end{align*}
  We deduce from this equality and $u_{\geq N}=Bu_{\geq N}-6\Delta^{-1}u_{\geq N}$ that
  \begin{multline} \label{Pf_NEm:w_Eq}
    (\mu-\cos\theta)\left\{\sum_{n=N_m}^{N-1}\left(1-\frac{6}{\lambda_n}\right)c_n^mY_n^m+Bu_{\geq N}\right\} \\
    = -\sum_{n=N_m}^{N-1}\frac{6}{\lambda_n}\mu c_n^mY_n^m+6\mu\Delta^{-1}u_{\geq N}.
  \end{multline}
  Moreover, we observe by \eqref{E:Y_Rec} with $n=|m|$ and $Y_{|m|-1}^m\equiv0$ that
  \begin{align*}
    Y_{|m|+1}^m = \frac{1}{a_{|m|+1}^m}\cos\theta\,Y_{|m|}^m = -\frac{1}{a_{|m|+1}^m}(\mu-\cos\theta)Y_{|m|}^m+\frac{\mu}{a_{|m|+1}^m}Y_{|m|}^m,
  \end{align*}
  and by \eqref{E:Y_Rec} with $n$ replaced by $n-1$ that
  \begin{align*}
    Y_n^m = \frac{1}{a_n^m}(\cos\theta\,Y_{n-1}^m-a_{n-1}^mY_{n-2}^m) = -\frac{1}{a_n^m}(\mu-\cos\theta)Y_{n-1}^m+\frac{1}{a_n^m}(\mu Y_{n-1}^m-a_{n-1}^mY_{n-2}^m)
  \end{align*}
  for $n\geq|m|+2$. Using these equalities, we can inductively show that
  \begin{align} \label{Pf_NEm:Y_Red}
    Y_n^m = (\mu-\cos\theta)\left(\sum_{k=|m|}^{n-1}\alpha_{k,n}^mY_k^m\right)+\beta_n^mY_{|m|}^m, \quad n\geq |m|+1.
  \end{align}
  Here $\alpha_{k,n}^m$ and $\beta_n^m$ are some coefficients depending on $\mu$ and $a_{n'}^m$ with $n'\geq|m|$, but we do not need their explicit forms. We substitute \eqref{Pf_NEm:Y_Red} for the right-hand side of \eqref{Pf_NEm:w_Eq}. Then, moving the terms with the factor $\mu-\cos\theta$ into the left-hand side, we have
  \begin{align} \label{Pf_NEm:f_Eq}
    (\mu-\cos\theta)\left(\sum_{n=|m|}^{N-1}\gamma_{n,N}^mY_n^m+Bu_{\geq N}\right) = \sigma_N^mY_{|m|}^m+6\mu\Delta^{-1}u_{\geq N}.
  \end{align}
  Here $\gamma_{n,N}^m$ and $\sigma_N^m$ are some coefficients depending on $\mu$, $\lambda_{n'}$, $a_{n'}^m$, and $c_{n'}^m$ with $n'\geq|m|$, but again we do not need their explicit forms. Since $\Delta^{-1}u_{\geq N}\in\mathcal{P}_mL_0^2(S^2)\cap H^2(S^2)$ and the Sobolev embedding $H^2(S^2)\hookrightarrow C(S^2)$ holds (see \cite{Aub98}), we can write
  \begin{align} \label{Pf_NEm:UN}
    \Delta^{-1}u_{\geq N} = \widetilde{U}_N(\theta)e^{im\varphi} \in \mathcal{P}_mL_0^2(S^2)\cap C(S^2), \quad \widetilde{U}_N \in C([0,\pi]).
  \end{align}
  Also, $Y_{|m|}^{m}$ is smooth on $S^2$ and of the form (see \eqref{E:SpHa})
  \begin{align*}
    Y_{|m|}^{m}(\theta,\varphi) = C_m\sin^{|m|}\theta\,e^{im\varphi} = C_m(1-\cos^2\theta)^{|m|/2}\,e^{im\varphi}
  \end{align*}
  with a constant $C_m\in\mathbb{R}\setminus\{0\}$. Thus $f_N^m=\sigma_N^mY_{|m|}^m+6\mu\Delta^{-1}u_{\geq N}$ is of the form
  \begin{align*}
    f_N^m = F_N^m(\theta)e^{im\varphi} \in \mathcal{P}_mL_0^2(S^2)\cap H^2(S^2), \quad F_N^m(\theta) = \sigma_N^mC_m(1-\cos^2\theta)^{|m|/2}+6\mu\widetilde{U}_N(\theta).
  \end{align*}
  Moreover, since $f_N^m$ is continuous on $S^2$ by $H^2(S^2)\subset C(S^2)$ and
  \begin{align} \label{Pf_NEm:f_Div}
    \sum_{n=|m|}^{N-1}\gamma_{n,N}^mY_n^m+Bu_{\geq N} = \frac{f_N^m}{\mu-x_3} \quad\text{on}\quad \{(x_1,x_2,x_3)\in S^2 \mid x_3\neq\mu\}
  \end{align}
  by \eqref{Pf_NEm:f_Eq} and $x_3=\cos\theta$, we have $f_N^m=0$ for $x_3=\mu$, otherwise the left-hand side of \eqref{Pf_NEm:f_Div} does not belong to $L^2(S^2)$. Hence $F_N^m(\theta_\mu)=0$ with $\theta_\mu=\arccos\mu\in(0,\pi)$, i.e.
  \begin{align*}
    \sigma_N^mC_m(1-\mu^2)^{|m|/2}+6\mu\widetilde{U}_N(\theta_\mu) = 0, \quad |\sigma_N^m|^2 = \frac{(6\mu)^2}{C_m^2(1-\mu^2)^{|m|}}\left|\widetilde{U}_N(\theta_\mu)\right|^2.
  \end{align*}
  Here and in the rest of the proof, $C_m^2$ stands for the square of $C_m$, not for a coefficient of $Y_m^2$. Moreover, noting that $\widetilde{U}_N$ is given by \eqref{Pf_NEm:UN}, we use \eqref{E:Linf} to get
  \begin{align} \label{Pf_NEm:sigma}
    |\sigma_N^m|^2 \leq \frac{(6\mu)^2}{\pi|m|C_m^2(1-\mu^2)^{|m|}}\|(-\Delta)^{-1/2}u_{\geq N}\|_{L^2(S^2)}^2.
  \end{align}
  Since $f_N^m=F_N^m(\theta)e^{im\varphi}$ and $F_N^m(\theta_\mu)=0$, we see by \eqref{E:L2_Mode} and \eqref{E:Hardy} that
  \begin{align*}
    \left\|\frac{f_N^m}{\mu-x_3}\right\|_{L^2(S^2)}^2 = 2\pi\int_0^\pi\left|\frac{F_N^m(\theta)\sqrt{\sin\theta}}{\mu-\cos\theta}\right|^2\,d\theta &\leq \frac{32}{\sin^2\theta_\mu}\|\nabla f_N^m\|_{L^2(S^2)}^2
  \end{align*}
  and use \eqref{E:Lap_Half} and $\sin^2\theta_\mu=1-\mu^2$ to the right-hand side to get
  \begin{align} \label{Pf_NEm:fd_Lhf}
    \left\|\frac{f_N^m}{\mu-x_3}\right\|_{L^2(S^2)}^2 \leq \frac{32}{1-\mu^2}\|(-\Delta)^{1/2}f_N^m\|_{L^2(S^2)}^2.
  \end{align}
  Moreover, since
  \begin{align*}
    (-\Delta)^{1/2}f_N^m = (-\Delta)^{1/2}(\sigma_N^mY_{|m|}^m+6\mu\Delta^{-1}u_{\geq N}) = \sigma_N^m\lambda_{|m|}^{1/2}Y_{|m|}^m-6\mu(-\Delta)^{-1/2}u_{\geq N}
  \end{align*}
  and $(-\Delta)^{-1/2}u_{\geq N}$ is orthogonal to $Y_{|m|}^m$ in $L^2(S^2)$ by \eqref{E:Def_Laps} and \eqref{Pf_NEm:u_dec},
  \begin{align} \label{Pf_NEm:L2_Lhf}
    \begin{aligned}
      \|(-\Delta)^{1/2}f_N^m\|_{L^2(S^2)}^2 &= |\sigma_N^m|^2\lambda_{|m|}\|Y_{|m|}^m\|_{L^2(S^2)}^2+(6\mu)^2\|(-\Delta)^{-1/2}u_{\geq N}\|_{L^2(S^2)}^2 \\
      &\leq (6\mu)^2\left\{\frac{|m|+1}{\pi C_m^2(1-\mu^2)^{|m|}}+1\right\}\|(-\Delta)^{-1/2}u_{\geq N}\|_{L^2(S^2)}^2
    \end{aligned}
  \end{align}
  by $\lambda_{|m|}=|m|(|m|+1)$, $\|Y_{|m|}^m\|_{L^2(S^2)}=1$, and \eqref{Pf_NEm:sigma}. Hence
  \begin{align} \label{Pf_NEm:L2_fd}
    \left\|\frac{f_N^m}{\mu-x_3}\right\|_{L^2(S^2)}^2 \leq C_{m,\mu}\|(-\Delta)^{-1/2}u_{\geq N}\|_{L^2(S^2)}^2
  \end{align}
  by \eqref{Pf_NEm:fd_Lhf} and \eqref{Pf_NEm:L2_Lhf}, where
  \begin{align*}
    C_{m,\mu} = \frac{32\cdot(6\mu)^2}{1-\mu^2}\left\{\frac{|m|+1}{\pi C_m^2(1-\mu^2)^{|m|}}+1\right\}.
  \end{align*}
  Now we take the $L^2(S^2)$-inner product of \eqref{Pf_NEm:f_Div} with $Bu_{\geq N}$. Then since $Bu_{\geq N}$ is orthogonal to $Y_{|m|}^m,\dots,Y_{N-1}^m$ in $L^2(S^2)$ by \eqref{E:Y_dphi} and \eqref{Pf_NEm:u_dec}, we get
  \begin{align*}
    \|Bu_{\geq N}\|_{L^2(S^2)}^2 = \left(\frac{f_N^m}{\mu-x_3},Bu_{\geq N}\right)_{L^2(S^2)} \leq \left\|\frac{f_N^m}{\mu-x_3}\right\|_{L^2(S^2)}\|Bu_{\geq N}\|_{L^2(S^2)}.
  \end{align*}
  We deduce from this inequality and \eqref{Pf_NEm:L2_fd} that
  \begin{align*}
    \|Bu_{\geq N}\|_{L^2(S^2)}^2 \leq \left\|\frac{f_N^m}{\mu-x_3}\right\|_{L^2(S^2)}^2 \leq C_{m,\mu}\|(-\Delta)^{-1/2}u_{\geq N}\|_{L^2(S^2)}^2.
  \end{align*}
  Moreover, when $N\geq3$, we have
  \begin{align*}
    \|Bu_{\geq N}\|_{L^2(S^2)}^2 \geq \frac{1}{4}\|u_{n\geq N}\|_{L^2(S^2)}^2, \quad \|(-\Delta)^{-1/2}u_{\geq N}\|_{L^2(S^2)}^2 \leq \frac{1}{\lambda_N}\|u_{n\geq N}\|_{L^2(S^2)}^2
  \end{align*}
  by \eqref{E:Lap_Half}, \eqref{E:Y_dphi}, \eqref{Pf_NEm:u_dec}, $1-6/\lambda_n\geq1/2$, and $\lambda_n\geq\lambda_N$ for $n\geq N$. Hence
  \begin{align} \label{Pf_NEm:Bw}
    \|u_{\geq N}\|_{L^2(S^2)}^2 \leq \frac{4C_{m,\mu}}{\lambda_N}\|u_{\geq N}\|_{L^2(S^2)}^2.
  \end{align}
  Now since $\lambda_N=N(N+1)\to\infty$ as $N\to\infty$ and $C_{m,\mu}$ is independent of $N$, we can fix a sufficiently large $N>N_m$ so that $4C_{m,\mu}/\lambda_N<1$. Then $u_{\geq N}=0$ by \eqref{Pf_NEm:Bw} and
  \begin{align*}
    u = \sum_{n=N_m}^{N-1}c_n^mY_n^m+u_{\geq N} = \sum_{n=N_m}^{N-1}c_n^mY_n^m, \quad N_m = \max\{2,|m|\}
  \end{align*}
  by \eqref{Pf_NEm:u_dec}. By this fact and \eqref{E:Y_Lam_Pm}, the equation $\mu u=\Lambda_mu$ reads
  \begin{align*}
    \sum_{n=N_m}^{N-1}\mu c_n^mY_n^m = \sum_{n=N_m}^{N-1}\left(1-\frac{6}{\lambda_n}\right)c_n^m(a_n^mY_{n-1}^m+a_{n+1}^mY_{n+1}^m).
  \end{align*}
  We get $0=(1-6/\lambda_{N-1})c_{N-1}^ma_N^m$ by taking the $L^2(S^2)$-inner product of the above equality with $Y_N^m$. Thus $c_{N-1}^m=0$ by $1-6/\lambda_{N-1}\neq0$ and $a_N^m\neq0$. We also have
  \begin{align*}
    c_{N-2}^m = \dots = c_{N_m+1}^m = 0, \quad\text{i.e.}\quad u = c_{N_m}^mY_{N_m}^m
  \end{align*}
  by repeating the above arguments, and then find that
  \begin{align*}
    \mu c_{N_m}^mY_{N_m}^m = \mu u = \Lambda_mu =
    \begin{cases}
      0, &|m|=1,2, \\
      \left(1-\displaystyle\frac{6}{\lambda_{N_m}}\right)c_{N_m}^ma_{N_m+1}^mY_{N_m+1}^m, &|m|\geq3,
    \end{cases}
  \end{align*}
  which yields $\mu c_{N_m}^m=0$ and thus $c_{N_m}^m=0$ by $\mu\neq0$. Hence we get $u=0$ and the proof is complete.
\end{proof}

\begin{proof}[Proof of Theorem \ref{T:NoEi_Lam}]
  Suppose that $\Lambda u=\mu u$ for $\mu\in\mathbb{C}\setminus\{0\}$ and $u\in D_{L_0^2(S^2)}(\Lambda)$. Then we have $0=\mu\mathcal{P}_0u$ and $m\Lambda_m\mathcal{P}_mu = \mu\mathcal{P}_mu$ for $m\neq0$ since $\Lambda$ is diagonalized as \eqref{E:Lam_diag}. By the first equality and $\mu\neq0$ we get $\mathcal{P}_0u=0$. Also, for $m\neq0$, it follows from the second equality and $\mu\neq0$ that
  \begin{align*}
    \Lambda_m\mathcal{P}_mu = \frac{\mu}{m}\mathcal{P}_mu, \quad \mathcal{P}_mu \in \mathcal{P}_mL_0^2(S^2), \quad \frac{\mu}{m} \in \mathbb{C}\setminus\{0\}
  \end{align*}
  and thus $\mathcal{P}_mu=0$ by Theorem \ref{T:NoEi_Pm}. Hence $u=\sum_{m\in\mathbb{Z}}\mathcal{P}_mu=0$ and we conclude that the theorem is valid.
\end{proof}

\section{Enhanced dissipation for the rescaled flow} \label{S:EnDi}
The purpose of this section is to show that the enhanced dissipation occurs for a solution to the rescaled equation $\partial_t\omega=A\omega-i\alpha\Lambda\omega$ with $\alpha\in\mathbb{R}$.

Noting that $AY_1^m=0$ for $|m|=0,1$ and $\Lambda Y_n^0=0$ for $n\geq1$ by \eqref{E:Y_A_Lam}, we set
\begin{align*}
  \mathcal{X} = \{u\in L_0^2(S^2) \mid (u,Y_n^0)_{L^2(S^2)}=(u,Y_1^m)_{L^2(S^2)}=0, \, n\geq1, \, |m| = 0,1\}.
\end{align*}
Then $\mathcal{X}$ is a closed subspace of $L_0^2(S^2)$ and $u\in\mathcal{X}$ is expressed as
\begin{align} \label{E:Ko_u_X}
  u = \sum_{m\in\mathbb{Z}\setminus\{0\}}\sum_{n\geq N_m}c_n^mY_n^m, \quad N_m = \max\{2,|m|\}.
\end{align}
By \eqref{E:Y_A_Lam} (in particular $\Lambda Y_2^m=0$ for $|m|=1,2$) and \eqref{E:Ko_u_X}, we see that $\mathcal{X}$ is invariant under the actions of $A$ and $\Lambda$. We simply write $A$ and $\Lambda$ for their restrictions on $\mathcal{X}$ with domains
\begin{align*}
  D_{\mathcal{X}}(A) = \mathcal{X}\cap D_{L_0^2(S^2)}(A), \quad D_{\mathcal{X}}(\Lambda) = \mathcal{X}\cap D_{L_0^2(S^2)}(\Lambda).
\end{align*}
For $\alpha\in\mathbb{R}$ let $L_\alpha=A-i\alpha\Lambda$ on $\mathcal{X}$ with domain $D_{\mathcal{X}}(L_\alpha)=D_{\mathcal{X}}(A)$. We intend to apply an abstract result given in Section \ref{S:Abst} to $L_\alpha$. Let us show auxiliary lemmas.

\begin{lemma} \label{L:Ko_ALam}
  The operator $A$ is self-adjoint and has a compact resolvent in $\mathcal{X}$, and
  \begin{align} \label{E:Ko_A_Po}
    (-Au,u)_{L^2(S^2)} \geq 4\|u\|_{L^2(S^2)}^2, \quad u\in D_{\mathcal{X}}(A).
  \end{align}
  Also, $\Lambda$ is densely defined, closed, and $A$-compact in $\mathcal{X}$.
\end{lemma}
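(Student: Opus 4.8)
The plan is to derive every assertion from the corresponding property already established for $A$ and $\Lambda$ on the whole space $L_0^2(S^2)$ in Section \ref{S:LiSt}, exploiting that $\mathcal{X}$ is the closed linear span of a subfamily of the eigenfunctions $Y_n^m$ and is therefore a reducing subspace for both operators. The orthogonal projection $\mathbb{P}_{\mathcal{X}}$ onto $\mathcal{X}$ is the spectral projection of $A$ associated with the eigenfunctions appearing in \eqref{E:Ko_u_X}, hence commutes with $A$; equivalently, both $\mathcal{X}$ and its orthogonal complement $\overline{\mathrm{span}}(\{Y_n^0\mid n\geq1\}\cup\{Y_1^{\pm1}\})$ are spanned by eigenfunctions of $A$ and thus invariant.

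First, for the self-adjointness and compact resolvent of $A$ in $\mathcal{X}$: since $\mathcal{X}$ reduces $A$, the restriction $A|_{\mathcal{X}}$ with domain $D_{\mathcal{X}}(A)=\mathcal{X}\cap D_{L_0^2(S^2)}(A)$ is self-adjoint in $\mathcal{X}$, and its resolvent is the restriction to the invariant subspace $\mathcal{X}$ of the resolvent of $A$ on $L_0^2(S^2)$, which is compact; a restriction of a compact operator to a closed invariant subspace is again compact. Next comes the coercivity \eqref{E:Ko_A_Po}, the only genuinely new point and the very reason for passing to $\mathcal{X}$. Using the expansion \eqref{E:Ko_u_X}, every $u\in\mathcal{X}$ has only modes with $n\geq2$, so $\lambda_n\geq\lambda_2=6$ and $\lambda_n-2\geq4$. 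Hence, exactly as in \eqref{E:Ko_A_nz} but now with no $n=1$ contribution,
\[
  (-Au,u)_{L^2(S^2)}=\sum_{m\neq0}\sum_{n\geq N_m}(\lambda_n-2)|c_n^m|^2\geq 4\sum_{m\neq0}\sum_{n\geq N_m}|c_n^m|^2=4\|u\|_{L^2(S^2)}^2 .
\]
The improvement over \eqref{E:Ko_A_nz} stems precisely from having removed $Y_1^0$, $Y_1^{\pm1}$ (the $n=1$ modes, for which $\lambda_1-2=0$) together with all $Y_n^0$.

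For the properties of $\Lambda$ on $\mathcal{X}$ I would argue as follows. Density of $D_{\mathcal{X}}(\Lambda)$ holds because finite linear combinations of the $Y_n^m$ spanning $\mathcal{X}$ lie in $\mathcal{X}\cap H^1(S^2)\subset D_{\mathcal{X}}(\Lambda)$ and are dense in $\mathcal{X}$. Closedness is inherited from the closedness of $\Lambda$ on $L_0^2(S^2)$: if $u_k\in D_{\mathcal{X}}(\Lambda)$ with $u_k\to u$ and $\Lambda u_k\to v$ in $\mathcal{X}$, the same convergences hold in $L_0^2(S^2)$, so $u\in D_{L_0^2(S^2)}(\Lambda)$ and $\Lambda u=v$; since $\mathcal{X}$ is closed, $u\in\mathcal{X}\cap D_{L_0^2(S^2)}(\Lambda)=D_{\mathcal{X}}(\Lambda)$. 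For the $A$-compactness I would reuse the three ingredients from Section \ref{S:LiSt}, all of which restrict verbatim to $\mathcal{X}$: the elliptic estimate $\|u\|_{H^2(S^2)}\leq C(\|Au\|_{L^2(S^2)}+2\|u\|_{L^2(S^2)})$, the compact embedding $H^2(S^2)\hookrightarrow H^1(S^2)$, and the boundedness of $\Lambda=-i\partial_\varphi M_{\cos\theta}B$ from $H^1(S^2)$ into $L^2(S^2)$ (as $B$ is bounded and diagonal, $M_{\cos\theta}$ is bounded, and $\partial_\varphi\colon H^1(S^2)\to L^2(S^2)$ is bounded). Combining them shows that $\Lambda$ maps $A$-bounded subsets of $D_{\mathcal{X}}(A)$ into relatively compact subsets of $\mathcal{X}$.

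This is essentially a bookkeeping lemma, so I do not anticipate a serious obstacle. The only points requiring genuine care are the reduction argument for self-adjointness and compact resolvent — one must verify that $\mathcal{X}^\perp$ is also $A$-invariant, so that $\mathcal{X}$ truly reduces $A$ — and, for the $A$-compactness of $\Lambda$, the fact that $\Lambda$ is controlled by the $H^1$-norm. Both are immediate from the diagonal structure of $A$ and $B=I+6\Delta^{-1}$ in the orthonormal basis $\{Y_n^m\}$.
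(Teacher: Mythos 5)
Your proposal is correct and takes essentially the same route as the paper, whose proof is just a terse version of yours: all statements except \eqref{E:Ko_A_Po} are inherited from $L_0^2(S^2)$ because $\mathcal{X}$ is invariant under the actions of $A$ and $\Lambda$, while \eqref{E:Ko_A_Po} follows from \eqref{E:Ko_A_nz} and the expansion \eqref{E:Ko_u_X}, exactly as you argue. One small correction: $\mathcal{X}$ reduces $A$ but is merely invariant (not reducing) under $\Lambda$, since $\Lambda Y_1^{\pm1}=\mp2a_2^{\pm1}Y_2^{\pm1}\in\mathcal{X}$ shows that $\mathcal{X}^\perp$ is not $\Lambda$-invariant; this is harmless, because your arguments for the density, closedness, and $A$-compactness of $\Lambda$ use only the invariance of $\mathcal{X}$ (reduction is needed, and verified by you, only for the self-adjointness and compact resolvent of $A$).
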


\begin{proof}
  The statements except for \eqref{E:Ko_A_Po} hold since they are valid in $L_0^2(S^2)$ and $\mathcal{X}$ is invariant under the actions of $A$ and $\Lambda$. Also, \eqref{E:Ko_A_Po} follows from \eqref{E:Ko_A_nz} and \eqref{E:Ko_u_X}.
\end{proof}

\begin{lemma} \label{L:Ko_KerL}
  For $u\in\mathcal{X}$ let $\mathbb{Q}u=u-\sum_{|m|=1,2}(u,Y_2^m)_{L^2(S^2)}Y_2^m$. Then
  \begin{align} \label{E:B2_LowB}
    \frac{1}{2}\|\mathbb{Q}u\|_{L^2(S^2)} \leq \|Bu\|_{L^2(S^2)} \leq \|u\|_{L^2(S^2)}, \quad u\in\mathcal{X},
  \end{align}
  where $B=I+6\Delta^{-1}$ on $L_0^2(S^2)$, and the kernel of $\Lambda$ in $\mathcal{X}$ is
  \begin{align} \label{E:Ko_KerL}
    N_{\mathcal{X}}(\Lambda) = \mathrm{span}\{Y_2^m\mid |m|=1,2\}.
  \end{align}
  Thus $\mathbb{Q}$ is the orthogonal projection from $\mathcal{X}$ onto
  \begin{align*}
    \mathcal{Y} = N_{\mathcal{X}}(\Lambda)^\perp=\{u\in\mathcal{X}\mid (u,Y_2^m)_{L^2(S^2)}=0,\,|m|=1,2\}.
  \end{align*}
  Moreover, $\mathbb{Q}A\subset A\mathbb{Q}$ in $\mathcal{X}$.
\end{lemma}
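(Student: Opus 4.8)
The plan is to treat the four assertions in turn, each reducing to the spectral action of $B$ and $A$ on the basis $\{Y_n^m\}$ together with the diagonalization \eqref{E:Lam_diag} of $\Lambda$.

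First I would establish the norm inequality \eqref{E:B2_LowB}. Expanding $u\in\mathcal{X}$ as in \eqref{E:Ko_u_X} and applying $BY_n^m=(1-6/\lambda_n)Y_n^m$ from \eqref{E:Y_dphi}, I note that $BY_2^m=0$ because $\lambda_2=6$; hence the factor $1-6/\lambda_n$ annihilates exactly the modes $Y_2^m$ with $|m|=1,2$, which are precisely the modes removed by $\mathbb{Q}$. Consequently both $Bu$ and $\mathbb{Q}u$ are supported on the modes with $n\geq3$. Since $\tfrac12\leq 1-6/\lambda_n<1$ for $n\geq3$ and $|1-6/\lambda_n|\leq1$ for every mode occurring in $u$ (all of which have $n\geq2$ by \eqref{E:Ko_u_X}), Parseval's identity applied termwise yields $\tfrac14\|\mathbb{Q}u\|_{L^2(S^2)}^2\leq\|Bu\|_{L^2(S^2)}^2\leq\|u\|_{L^2(S^2)}^2$, which is \eqref{E:B2_LowB}.

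Next I would identify the kernel \eqref{E:Ko_KerL}. The inclusion $\mathrm{span}\{Y_2^m:|m|=1,2\}\subseteq N_{\mathcal{X}}(\Lambda)$ is immediate since $\Lambda Y_2^m=0$ for $|m|=1,2$ by \eqref{E:Y_A_Lam}. For the reverse inclusion, I use the diagonalization \eqref{E:Lam_diag}: writing $u=\sum_{m\neq0}\mathcal{P}_mu$, the relation $\Lambda u=0$ forces $\Lambda_m\mathcal{P}_mu=M_{\cos\theta}B\mathcal{P}_mu=0$ in each sector $m\neq0$. The crux is that $\cos\theta$ vanishes only on the equator, a set of measure zero, so $M_{\cos\theta}B\mathcal{P}_mu=0$ forces $B\mathcal{P}_mu=0$ as an $L^2$ function; the eigenvalue formula for $B$ then gives $(1-6/\lambda_n)c_n^m=0$ for all $n$, whence $c_n^m=0$ whenever $n\neq2$. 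Only the modes $Y_2^m$ with $|m|=1,2$ survive, proving \eqref{E:Ko_KerL}. I expect this step, namely turning $M_{\cos\theta}B\mathcal{P}_mu=0$ into $B\mathcal{P}_mu=0$, to be the one point requiring care, though it is settled by the measure-zero observation.

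Finally I would dispatch the projection and commutation claims. Because $\{Y_2^m:|m|=1,2\}$ is orthonormal, $u\mapsto\sum_{|m|=1,2}(u,Y_2^m)_{L^2(S^2)}Y_2^m$ is the orthogonal projection of $\mathcal{X}$ onto $N_{\mathcal{X}}(\Lambda)$ by \eqref{E:Ko_KerL}, so $\mathbb{Q}$ is the orthogonal projection onto $\mathcal{Y}=N_{\mathcal{X}}(\Lambda)^\perp$. For $\mathbb{Q}A\subset A\mathbb{Q}$, the key observation is that each $Y_2^m$ is an eigenfunction of $A$ with the common eigenvalue $-4$, since $AY_2^m=(-\lambda_2+2)Y_2^m=-4Y_2^m$ by \eqref{E:Y_A_Lam}. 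For $u\in D_{\mathcal{X}}(A)$ I then have $\mathbb{Q}u\in D_{\mathcal{X}}(A)$, the $Y_2^m$ being smooth, and using the self-adjointness of $A$ to write $(Au,Y_2^m)_{L^2(S^2)}=-4(u,Y_2^m)_{L^2(S^2)}$, a one-line computation shows that both $A\mathbb{Q}u$ and $\mathbb{Q}Au$ equal $Au+4\sum_{|m|=1,2}(u,Y_2^m)_{L^2(S^2)}Y_2^m$, giving $\mathbb{Q}A\subset A\mathbb{Q}$.
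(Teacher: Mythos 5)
Your proposal is correct and follows essentially the same route as the paper's proof: \eqref{E:B2_LowB} comes from Parseval and the spectral formula $BY_n^m=(1-6/\lambda_n)Y_n^m$, the kernel identification reduces to $M_{\cos\theta}Bu=0$ forcing $Bu=0$ (since $\cos\theta\neq0$ a.e.) and then to the vanishing of the coefficients with $n\neq2$, and $\mathbb{Q}A\subset A\mathbb{Q}$ follows from $AY_2^m=-4Y_2^m$ together with the self-adjointness of $A$, exactly as in the paper. The only cosmetic difference is that you obtain $M_{\cos\theta}B\mathcal{P}_mu=0$ sector by sector from the diagonalization \eqref{E:Lam_diag}, whereas the paper derives the same information by testing $\Lambda u=0$ against the $Y_n^m$ using the symmetry of $-i\partial_\varphi$ and then concludes $\mathbb{Q}u=0$ via \eqref{E:B2_LowB}.
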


\begin{proof}
  Let $u\in\mathcal{X}$ be of the form \eqref{E:Ko_u_X}. Then since
  \begin{align} \label{Pf_KNL:Qu}
    \mathbb{Q}u = \sum_{m\in\mathbb{Z}\setminus\{0\}}\sum_{n\geq N'_m}c_n^mY_n^m, \quad N'_m = \max\{3,|m|\},
  \end{align}
  we have \eqref{E:B2_LowB} by \eqref{E:Y_dphi}, $\lambda_2=6$, and $1/2\leq1-6/\lambda_n\leq1$ for $n\geq3$. Let $u\in D_{\mathcal{X}}(\Lambda)$ satisfy $\Lambda u=0$. Then by integration by parts and \eqref{E:Y_dphi} we have
  \begin{align*}
    0 = (\Lambda u,Y_n^m)_{L^2(S^2)} = (f,-i\partial_\varphi Y_n^m)_{L^2(S^2)} = m(f,Y_n^m)_{L^2(S^2)},
  \end{align*}
  where $f=M_{\cos\theta}Bu$. Hence $(f,Y_n^m)_{L^2(S^2)}=0$ for $n\geq0$ and $|m|\neq0$. Also, since $u$ is of the form \eqref{E:Ko_u_X}, we see by \eqref{E:Y_Rec} and \eqref{E:Y_dphi} that $(f,Y_n^0)_{L^2(S^2)}=0$ for $n\geq0$ and thus $f=0$. Hence $Bu=0$ and $\mathbb{Q}u=0$, i.e. $u\in\mathrm{span}\{Y_2^m \mid |m|=1,2\}$ by \eqref{E:B2_LowB}. By this fact and $\Lambda Y_2^m=0$ for $|m|=1,2$, we get \eqref{E:Ko_KerL}. We also have $\mathbb{Q}A\subset A\mathbb{Q}$ in $\mathcal{X}$ since $Y_2^m$ is smooth on $S^2$ and $AY_2^m=-4Y_2^m$ for $|m|=1,2$ by \eqref{E:Y_A_Lam}, and since $A$ is self-adjoint in $\mathcal{X}$.
\end{proof}

\begin{lemma} \label{L:Ko_As3}
  Let $B_1$ be a linear operator on $\mathcal{H}=L^2(S^2)$ given by
  \begin{align*}
    B_1 = -i\partial_\varphi M_{\cos\theta}, \quad D_{\mathcal{H}}(B_1) = \{u\in L^2(S^2) \mid \partial_\varphi M_{\cos\theta}u \in L^2(S^2)\}
  \end{align*}
  and $B_2=B|_{\mathcal{X}}=(I+6\Delta^{-1})|_{\mathcal{X}}$ on $\mathcal{X}$. Then $A$ and $\Lambda$ satisfy Assumption \ref{As:La_02}.
\end{lemma}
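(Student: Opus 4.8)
The plan is to verify, one structural hypothesis at a time, that the triple $(A,B_1,B_2)$ meets every requirement of Assumption \ref{As:La_02}, drawing on the diagonal action of $A$, $B$, and $\Lambda$ on the orthonormal basis $\{Y_n^m\}$ together with Lemmas \ref{L:Ko_ALam} and \ref{L:Ko_KerL}. The starting point is the factorization $\Lambda=B_1B_2$ on $\mathcal{X}$: since $\cos\theta$ does not depend on $\varphi$, the operators $M_{\cos\theta}$ and $\partial_\varphi$ commute, so $B_1=-i\partial_\varphi M_{\cos\theta}=M_{\cos\theta}(-i\partial_\varphi)$ and therefore $\Lambda|_{\mathcal{X}}u=-i\partial_\varphi M_{\cos\theta}(Bu)=B_1(B_2u)$. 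I would stress that $B_1$ must be taken on the full space $\mathcal{H}=L^2(S^2)$ because $M_{\cos\theta}$ moves functions out of $\mathcal{X}$ (indeed out of $L_0^2(S^2)$), while the composition $B_1B_2$ returns to $\mathcal{X}$ thanks to the factor $\partial_\varphi$.

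Next I would record the symmetry and positivity data. The operator $B_1=M_{\cos\theta}(-i\partial_\varphi)$ is a product of the commuting symmetric operators $M_{\cos\theta}$ (bounded) and $-i\partial_\varphi$, hence symmetric on $\mathcal{H}$, as already used in Section \ref{S:LiSt}. For $B_2$, the second relation in \eqref{E:Y_dphi} shows that $B$ acts on $Y_n^m$ by the scalar $1-6/\lambda_n$; on $\mathcal{X}$, where $n\geq N_m=\max\{2,|m|\}$, these scalars lie in $[0,1)$ and vanish exactly on $\mathrm{span}\{Y_2^m:|m|=1,2\}=N_{\mathcal{X}}(\Lambda)$. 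Thus $B_2$ is bounded, self-adjoint, and nonnegative with $0\leq B_2\leq I$, and the lower bound \eqref{E:B2_LowB} gives $B_2\geq\tfrac12$ on $\mathcal{Y}=N_{\mathcal{X}}(\Lambda)^\perp$. The self-adjointness, compact resolvent, and coercivity $-A\geq4$ of $A$ are exactly Lemma \ref{L:Ko_ALam}, where the closedness and $A$-compactness of $\Lambda$ are recorded as well.

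It remains to supply the commutation and relative-boundedness conditions. Since $A=\Delta+2$ and $B=I+6\Delta^{-1}$ are both functions of $\Delta$, they are simultaneously diagonalized by $\{Y_n^m\}$, so $B_2$ commutes with $A$ (equivalently with the resolvent of $A$) on the invariant subspace $\mathcal{X}$; this furnishes any hypothesis of the form $[A,B_2]=0$ or $\mathbb{Q}A\subset A\mathbb{Q}$ already noted in Lemma \ref{L:Ko_KerL}. For the relative bound of $B_1$, I would use $|\cos\theta|\leq1$ together with \eqref{E:L2_Mode} and \eqref{E:Lap_Half} to estimate $\|B_1u\|_{L^2(S^2)}\leq\|\partial_\varphi u\|_{L^2(S^2)}\leq\|\nabla u\|_{L^2(S^2)}=\|(-\Delta)^{1/2}u\|_{L^2(S^2)}$, and then compare $-\Delta$ with $-A$ on $\mathcal{X}$: since $\lambda_n\geq6$ there, the eigenvalues satisfy $\lambda_n-2\geq\tfrac23\lambda_n$, whence $-A\geq\tfrac23(-\Delta)$ as quadratic forms and $B_1$ is $(-A)^{1/2}$-bounded with the explicit constant $\sqrt{3/2}$.

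The genuinely delicate point is not any single estimate but the matching of these concrete facts to the precise quantitative form demanded by Assumption \ref{As:La_02}: in particular, tracking the constants on $\mathcal{Y}$, where the strict positivity $B_2\geq\tfrac12$ must be combined with the $(-A)^{1/2}$-bound of $B_1$ to fit the weighted-Hilbert-space setup underlying the Gearhart--Pr\"{u}ss argument of Section \ref{S:Abst}. I expect each required inequality to follow routinely from \eqref{E:Y_dphi}, \eqref{E:L2_Mode}, \eqref{E:Lap_Half}, and Lemmas \ref{L:Ko_ALam}--\ref{L:Ko_KerL}, with the only real care going into reconciling the abstract constants with these explicit operator bounds.
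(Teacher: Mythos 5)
Your verification of the preamble and of conditions (i)--(ii) of Assumption \ref{As:La_02} is sound and follows the same route as the paper: the factorization $\Lambda u=B_1(B_2u)$ is immediate from the definition $\Lambda=-i\partial_\varphi M_{\cos\theta}B$, the symmetry of $B_1$ follows by integration by parts in $\varphi$, and the identification $N_{\mathcal{X}}(B_2)=\mathrm{span}\{Y_2^m:|m|=1,2\}=N_{\mathcal{X}}(\Lambda)$ together with $(u,B_2u)_{\mathcal{X}}\geq\tfrac12\|u\|_{\mathcal{X}}^2$ on $\mathcal{Y}$ comes from the diagonal action $BY_n^m=(1-6/\lambda_n)Y_n^m$ and $1-6/\lambda_n\geq\tfrac12$ for $n\geq3$, exactly as in the paper (which takes $C=1/2$). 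Two small cautions: the assumption also requires $B_1$ to be \emph{closed}, which you never record (it follows since $M_{\cos\theta}$ is bounded and $-i\partial_\varphi$ is closed), and the identity $B_1=M_{\cos\theta}(-i\partial_\varphi)$ is an abuse of notation at the level of domains, since $\partial_\varphi(\cos\theta\,u)\in L^2(S^2)$ does not force $\partial_\varphi u\in L^2(S^2)$ near the equator where $\cos\theta$ vanishes; symmetry should be checked on the maximal domain directly.

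The genuine gap is in condition (iii): it consists of \emph{two} inequalities, \eqref{E:uB2u} and \eqref{E:AB2u}, and you never state or prove the second one, $\mathrm{Re}(-Au,B_2u)_{\mathcal{X}}\geq C\|(-A)^{1/2}u\|_{\mathcal{X}}^2$ for $u\in D_{\mathcal{X}}(A)\cap\mathcal{Y}$. In its place you verify two conditions that are not part of Assumption \ref{As:La_02} at all: the commutation $[A,B_2]=0$ (the relation $\mathbb{Q}A\subset A\mathbb{Q}$ belongs to Assumption \ref{As:La_01} and is already proved in Lemma \ref{L:Ko_KerL}), and an $(-A)^{1/2}$-bound for $B_1$, which resembles condition (b) of Theorem \ref{T:Phi_Conv} but that condition is checked in the proof of Theorem \ref{T:Ko_EnDi}, not in this lemma. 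Indeed, your closing paragraph describes the remaining work as combining $B_2\geq\tfrac12$ with the bound on $B_1$, yet \eqref{E:AB2u} does not involve $B_1$ at all. Fortunately the omission is repaired in one line by the diagonalization you already invoke: for $u\in D_{\mathcal{X}}(A)\cap\mathcal{Y}$, written in the form \eqref{Pf_KNL:Qu},
\begin{align*}
  \mathrm{Re}(-Au,B_2u)_{\mathcal{X}} = \sum_{m\in\mathbb{Z}\setminus\{0\}}\sum_{n\geq N'_m}(\lambda_n-2)\left(1-\frac{6}{\lambda_n}\right)|c_n^m|^2 \geq \frac{1}{2}\sum_{m\in\mathbb{Z}\setminus\{0\}}\sum_{n\geq N'_m}(\lambda_n-2)|c_n^m|^2,
\end{align*}
and the right-hand side equals $\tfrac12\|(-A)^{1/2}u\|_{\mathcal{X}}^2$ by \eqref{E:Def_mAs}; this is precisely the paper's argument with $C=1/2$. (Equivalently, it follows from $[A,B_2]=0$, the invariance of $\mathcal{Y}$ under $(-A)^{1/2}$, and \eqref{E:uB2u} applied to $(-A)^{1/2}u$ -- but you must say so.)
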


Note that, as in \eqref{E:Def_Laps}, the operator $(-A)^s$ is defined on $\mathcal{X}$ by
\begin{align} \label{E:Def_mAs}
  (-A)^su = \sum_{m\in\mathbb{Z}\setminus\{0\}}\sum_{n\geq N_m}(\lambda_n-2)^sc_n^mY_n^m
\end{align}
for $s\in\mathbb{R}$ and $u\in\mathcal{X}$ of the form \eqref{E:Ko_u_X}.

\begin{proof}
  By the definitions and Lemma \ref{L:Ko_KerL}, we easily find that $B_1$ is a closed symmetric operator on $\mathcal{H}$, $B_2$ is a bounded self-adjoint operator on $\mathcal{X}$, and the conditions (i) and (ii) of Assumption \ref{As:La_02} are satisfied. Also, since $u=\mathbb{Q}u\in\mathcal{Y}$ is of the form \eqref{Pf_KNL:Qu}, we observe by \eqref{E:Y_dphi}, \eqref{E:Y_A_Lam}, \eqref{E:Def_mAs}, and $1-6/\lambda_n\geq1/2$ for $n\geq3$ that the condition (iii) holds with constant $C=1/2$.
\end{proof}

Now we give the main result of this section and the proof of Theorem \ref{T:EnDi_Lna}.

\begin{theorem} \label{T:Ko_EnDi}
  The operator $L_\alpha$ generates an analytic semigroup $\{e^{tL_\alpha}\}_{t\geq0}$ in $\mathcal{X}$ for all $\alpha\in\mathbb{R}$. Moreover, for each $\tau>0$ we have
  \begin{align} \label{E:Ko_EnDi}
    \lim_{|\alpha|\to\infty}\sup_{t\geq\tau}\|\mathbb{Q}e^{tL_\alpha}\|_{\mathcal{X}\to\mathcal{X}} = 0.
  \end{align}
\end{theorem}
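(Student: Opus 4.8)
The plan is to combine the nonexistence of nonzero eigenvalues of $\Lambda$ (Theorem \ref{T:NoEi_Lam}) with a quantitative semigroup decay estimate of Gearhart--Pr\"{u}ss type, following the pseudospectral approach of \cite{IbMaMa19,Wei21}. First I would settle the generation statement. By Lemma \ref{L:Ko_ALam} the operator $A$ is self-adjoint with compact resolvent and $-A\geq4$ on $\mathcal{X}$, while $\Lambda$ is $A$-compact; hence $-i\alpha\Lambda$ is $A$-bounded with relative bound zero, and the same perturbation theory of analytic semigroups used in Section \ref{S:LiSt} shows that $L_\alpha=A-i\alpha\Lambda$ generates an analytic semigroup on $\mathcal{X}$ for every $\alpha\in\mathbb{R}$.

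Next I would reduce the estimate \eqref{E:Ko_EnDi} to the dynamics seen from $\mathcal{Y}=N_{\mathcal{X}}(\Lambda)^\perp$. By Lemma \ref{L:Ko_KerL} the kernel $N_{\mathcal{X}}(\Lambda)=\mathrm{span}\{Y_2^m\mid|m|=1,2\}$ is invariant under $L_\alpha$, since $AY_2^m=-4Y_2^m$ and $\Lambda Y_2^m=0$ by \eqref{E:Y_A_Lam}, so $e^{tL_\alpha}$ acts as the scalar $e^{-4t}$ on $N_{\mathcal{X}}(\Lambda)$. Writing any $u\in\mathcal{X}$ as $u=\mathbb{Q}u+(I-\mathbb{Q})u$ with $(I-\mathbb{Q})u\in N_{\mathcal{X}}(\Lambda)$ and using $\mathbb{Q}(I-\mathbb{Q})=0$, one gets $\mathbb{Q}e^{tL_\alpha}=\mathbb{Q}e^{tL_\alpha}\mathbb{Q}$. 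Thus it suffices to control the compressed semigroup on $\mathcal{Y}$, where the slowly decaying kernel no longer interferes.

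The heart of the proof is to apply the abstract machinery of Section \ref{S:Abst}. Setting $H_\alpha=-L_\alpha=-A+i\alpha\Lambda$, Lemma \ref{L:Ko_As3} verifies that $A$ and $\Lambda$ satisfy Assumption \ref{As:La_02} with the factorization $\Lambda=B_1B_2$, where $B_1=-i\partial_\varphi M_{\cos\theta}$ is symmetric and $B_2=B|_{\mathcal{X}}$ is bounded self-adjoint with $B_2\geq\tfrac12$ on $\mathcal{Y}$. Passing to the inner product weighted by $B_2$, in which $\Lambda$ becomes symmetric and $H_\alpha$ becomes $m$-accretive (since $B_2$ commutes with $-A$ and $B_2(-A)\geq2$ on $\mathcal{Y}$), the weighted Gearhart--Pr\"{u}ss type theorem of Section \ref{S:Abst} produces a decay bound of the form $\|\mathbb{Q}e^{tL_\alpha}\mathbb{Q}\|_{\mathcal{X}\to\mathcal{X}}\leq Ce^{-c\,\Psi(\alpha)\,t}$ for $t\geq\tau$, whose rate is controlled from below by the pseudospectral bound $\Psi(\alpha)$ of $H_\alpha$ compressed to $\mathcal{Y}$, with $C,c>0$ independent of $\alpha$.

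It then remains to prove that $\Psi(\alpha)\to\infty$ as $|\alpha|\to\infty$; this is where Theorem \ref{T:NoEi_Lam} is essential, through the convergence result for the pseudospectral bound of \cite{IbMaMa19}. I would argue by contradiction: if $\Psi(\alpha)$ stayed bounded along some sequence $|\alpha_k|\to\infty$, then, after rescaling the spectral parameter as $\lambda=\alpha_k\beta_k$ and normalizing, the $A$-compactness of $\Lambda$ (Lemma \ref{L:Ko_ALam}) lets one extract a nonzero limit $u$ solving $\Lambda u=\beta u$ with $\beta\in\mathbb{R}$; the compression to $\mathcal{Y}$ forces $\beta\neq0$, contradicting Theorem \ref{T:NoEi_Lam}. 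Hence $\Psi(\alpha)\to\infty$, the rate $c\,\Psi(\alpha)$ diverges, and for each fixed $\tau>0$ the supremum in \eqref{E:Ko_EnDi} vanishes. The main obstacle is precisely this last step: correctly matching the hypotheses of the abstract convergence result and the weighted Gearhart--Pr\"{u}ss theorem to the present setting, especially the interplay between the $\alpha$-rescaling, the $A$-compactness that yields an eigenfunction in the limit, and the role of $\mathbb{Q}$ in excluding the eigenvalue $\beta=0$.
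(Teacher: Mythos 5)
Your proposal follows the same route as the paper: generation by an $A$-compact perturbation, the reduction $\mathbb{Q}e^{tL_\alpha}=\mathbb{Q}e^{tL_\alpha}\mathbb{Q}$ (which the paper encodes through Lemma \ref{L:PS_Res} and \eqref{Pf_SBA:QetL}), the weighted Gearhart--Pr\"{u}ss bound of Theorem \ref{T:Semi_BoAl}, and the compactness/contradiction argument of Theorem \ref{T:Phi_Conv} with Theorem \ref{T:NoEi_Lam} as the key input. The generation step and the reduction to $\mathcal{Y}$ are correct. The gap is in the final step, exactly where you place the main obstacle, and it is not a matter of bookkeeping. The contradiction argument does not produce a nonzero $u$ solving $\Lambda u=\beta u$: since the operator being inverted is $\mathbb{Q}L_\alpha=\mathbb{Q}A-i\alpha\mathbb{Q}\Lambda$, the limit $u_\infty\in\mathcal{Y}$ satisfies the eigenvalue equation for the \emph{compressed} operator, $\mathbb{Q}\Lambda u_\infty=-\mu_\infty u_\infty$, equivalently $\Lambda u_\infty=-\mu_\infty u_\infty+\mathbb{P}\Lambda u_\infty$ with an extra component in the kernel. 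Theorem \ref{T:NoEi_Lam} says nothing about eigenvalues of $\mathbb{Q}\Lambda$. To invoke it one must apply $\Lambda$ once more (using $\Lambda\mathbb{P}=0$) to get $\Lambda^2u_\infty=-\mu_\infty\Lambda u_\infty$, so the candidate eigenvector of $\Lambda$ is $\Lambda u_\infty$, nonzero precisely because $u_\infty\in\mathcal{Y}\setminus\{0\}$; Theorem \ref{T:NoEi_Lam} then rules out only the case $\mu_\infty\neq0$.

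The case $\mu_\infty=0$ is where your sketch fails: ``compression to $\mathcal{Y}$ forces $\beta\neq0$'' is false for the equation you actually obtain. When $\mu_\infty=0$ you learn only that $\Lambda u_\infty\in N_{\mathcal{X}}(\Lambda)\cap R_{\mathcal{X}}(\Lambda)$, and to conclude $\Lambda u_\infty=0$ (hence $u_\infty\in N_{\mathcal{X}}(\Lambda)\cap\mathcal{Y}=\{0\}$, the desired contradiction) you need the structural hypothesis $N_{\mathcal{X}}(\Lambda)\cap R_{\mathcal{X}}(\Lambda)=\{0\}$, i.e. condition (d) of Theorem \ref{T:Phi_Conv}. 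Verifying (d) for the present operator is a genuinely concrete portion of the paper's proof: assuming $\Lambda u=\sum_{|m|=1,2}d_2^mY_2^m$, one shows $d_2^{\pm1}=0$ by using the recurrence \eqref{E:Y_Rec} to write $Y_2^{\pm1}$ as a multiple of $M_{\cos\theta}Y_1^{\pm1}$ and then exploiting the orthogonality of $B\mathcal{P}_{\pm1}u$ to $Y_1^{\pm1}$, and one shows $d_2^{\pm2}=0$ because $Y_2^{\pm2}=C_2\sin^2\theta\,e^{\pm2i\varphi}$ would force $B\mathcal{P}_{\pm2}u$ to be a multiple of $(\sin^2\theta/\cos\theta)e^{\pm2i\varphi}$, which does not belong to $L^2(S^2)$. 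Your proposal omits this verification entirely (as well as the routine checks of conditions (b) and (c) of Theorem \ref{T:Phi_Conv}, which the paper carries out via \eqref{Pf_KED:cb_Luu}--\eqref{Pf_KED:cb_dphi} and the inclusion $N_{\mathcal{X}}(\Lambda)\subset\mathcal{X}\cap H^1(S^2)$); without (d), the whole contradiction argument collapses precisely at $\beta=0$.
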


\begin{proof}
  By Lemma \ref{L:Ko_ALam} and a perturbation theory of semigroups (see \cite{EngNag00}), we see that $L_\alpha$ generates an analytic semigroup $\{e^{tL_\alpha}\}_{t\geq0}$ in $\mathcal{X}$.

  We intend to apply Theorem \ref{T:Phi_Conv} to get \eqref{E:Ko_EnDi}. To this end, we verify the assumptions of Theorem \ref{T:Phi_Conv}. Assumptions \ref{As:A}--\ref{As:La_02} and the condition (a) of Theorem \ref{T:Phi_Conv} are valid by Lemmas \ref{L:Ko_ALam}--\ref{L:Ko_As3}. Also, the condition (e) follows from Theorem \ref{T:NoEi_Lam}. Hence it is sufficient to show that the conditions (b)--(d) are satisfied. Let $u\in D_{\mathcal{X}}(A)$. Then
  \begin{align} \label{Pf_KED:cb_Luu}
    \begin{aligned}
      \bigl|(\Lambda u,u)_{L^2(S^2)}\bigr| &= \bigl|(\partial_\varphi M_{\cos\theta}Bu,u)_{L^2(S^2)}\bigr| = \bigl|(M_{\cos\theta}Bu,\partial_\varphi u)_{L^2(S^2)}\bigr| \\
      &\leq \|u\|_{L^2(S^2)}\|\nabla u\|_{L^2(S^2)} = \|u\|_{L^2(S^2)}\|(-\Delta)^{1/2}u\|_{L^2(S^2)}
    \end{aligned}
  \end{align}
  by integration by parts, $|\cos\theta|\leq1$, \eqref{E:Lap_Half}, and \eqref{E:B2_LowB}. Moreover, since $u$ is of the form \eqref{E:Ko_u_X} and $\lambda_n\leq3(\lambda_n-2)/2$ for $n\geq2$, we see by \eqref{E:Def_Laps} and \eqref{E:Def_mAs} that
  \begin{align} \label{Pf_KED:cb_dphi}
    \|(-\Delta)^{1/2}u\|_{L^2(S^2)}^2 \leq \frac{3}{2}\|(-A)^{1/2}u\|_{L^2(S^2)}^2 = \frac{3}{2}(-Au,u)_{L^2(S^2)}.
  \end{align}
  Hence the condition (b) follows from \eqref{E:Ko_A_Po}, \eqref{Pf_KED:cb_Luu}, and \eqref{Pf_KED:cb_dphi}. Also,
  \begin{align*}
    D_{\mathcal{X}}(A), \, N_{\mathcal{X}}(\Lambda) \subset \mathcal{X}\cap H^1(S^2) \subset D_{\mathcal{X}}(\Lambda^\ast)
  \end{align*}
  by the definition of $A$ in $\mathcal{X}$ and \eqref{E:Ko_KerL}, and thus the condition (c) holds. Let us verify the condition (d). We observe by \eqref{E:Ko_KerL} that $f\in R_{\mathcal{X}}(\Lambda)\cap N_{\mathcal{X}}(\Lambda)$ is of the form
  \begin{align} \label{Pf_KED:cd_f}
    f = \Lambda u = \sum_{|m|=1,2}d_2^mY_2^m, \quad u\in D_{\mathcal{X}}(\Lambda), \quad d_2^m\in\mathbb{C}.
  \end{align}
  Let us show $d_2^m=0$. Since $\Lambda$ is diagonalized as \eqref{E:Lam_diag}, we have
  \begin{align} \label{Pf_KED:cd_Ym}
    mM_{\cos\theta}B\mathcal{P}_mu = d_2^mY_2^m, \quad |m| = 1,2
  \end{align}
  by \eqref{Pf_KED:cd_f}, where $B=I+6\Delta^{-1}$ and $\mathcal{P}_m$ is given by \eqref{E:Def_Proj}. When $m=1$, we apply \eqref{E:Y_Rec} with $(n,m)=(1,1)$ and $Y_0^1\equiv0$ to \eqref{Pf_KED:cd_Ym} to get
  \begin{align*}
    M_{\cos\theta}B\mathcal{P}_1u = d_2^1Y_2^1 = \frac{d_2^1}{a_2^1}M_{\cos\theta}Y_1^1, \quad M_{\cos\theta}(a_2^1B\mathcal{P}_1u-d_2^1Y_1^1) = 0.
  \end{align*}
  Hence $a_2^1B\mathcal{P}_1u-d_2^1Y_1^1=0$ on $S^2$. Moreover, since $u$ is of the form \eqref{E:Ko_u_X}, $\mathcal{P}_1u=\sum_{n\geq2}c_n^1Y_n^1$. By these facts, $d_2^1=(d_2^1Y_1^1,Y_1^1)_{L^2(S^2)}$, and \eqref{E:Y_dphi}, we find that
  \begin{align*}
    d_2^1 = (a_2^1B\mathcal{P}_1u,Y_1^1)_{L^2(S^2)} = a_2^1\sum_{n\geq2}\left(1-\frac{6}{\lambda_n}\right)c_n^1(Y_n^1,Y_1^1)_{L^2(S^2)} = 0.
  \end{align*}
  We also have $d_2^{-1}=0$ in the same way. Let $m=2$. Then since $Y_2^2 = C_2\sin^2\theta\,e^{2i\varphi}$ with a nonzero constant $C_2\in\mathbb{R}$ by \eqref{E:SpHa}, we can rewrite the equation \eqref{Pf_KED:cd_Ym} as
  \begin{align*}
    2B\mathcal{P}_2u(\theta,\varphi) = d_2^2C_2\frac{\sin^2\theta}{\cos\theta}e^{2i\varphi}, \quad (\theta,\varphi)\in[0,\pi]\times[0,2\pi), \, \theta\neq\frac{\pi}{2}.
  \end{align*}
  Hence $d_2^2=0$, otherwise the left-hand side does not belong to $L^2(S^2)$. Similarly, we have $d_2^{-2}=0$ and thus $f=0$ by \eqref{Pf_KED:cd_f}, i.e. the condition (d) is valid. Therefore, we can apply Theorem \ref{T:Phi_Conv} to obtain \eqref{E:Ko_EnDi}.
\end{proof}

\begin{proof}[Proof of Theorem \ref{T:EnDi_Lna}]
  For $\nu>0$ and $a\in\mathbb{R}$ let $\alpha=a/\nu$. Then $e^{tL_\alpha}=e^{\frac{t}{\nu}\mathcal{L}^{\nu,a}}|_{\mathcal{X}}$ in $\mathcal{X}$ for $t\geq0$ since $L_\alpha=\nu^{-1}\mathcal{L}^{\nu,a}|_{\mathcal{X}}$. Hence \eqref{E:EnDi_Lna} follows from \eqref{E:Ko_EnDi}.
\end{proof}

\section{Abstract results} \label{S:Abst}

This section gives abstract results for a perturbed operator.

For a linear operator $T$ on a Banach space $\mathcal{B}$, we denote by $D_{\mathcal{B}}(T)$, $\rho_{\mathcal{B}}(T)$, and $\sigma_{\mathcal{B}}(T)$ the domain, the resolvent set, and the spectrum of $T$ in $\mathcal{B}$. Also, let $N_{\mathcal{B}}(T)$ and $R_{\mathcal{B}}(T)$ be the kernel and range of $T$ in $\mathcal{B}$. We say that $T$ is Fredholm of index zero if $R_{\mathcal{B}}(T)$ is closed in $\mathcal{B}$ and the dimensions of $N_{\mathcal{B}}(T)$ and the quotient space $\mathcal{B}/R_{\mathcal{B}}(T)$ are finite and the same, and define
\begin{align*}
  \tilde{\sigma}_{\mathcal{B}}(T) = \{\zeta\in\mathbb{C} \mid \text{$\zeta-T$ is not Fredholm of index zero}\} \subset \sigma_{\mathcal{B}}(T).
\end{align*}
Note that $\sigma_{\mathcal{B}}(T)\setminus\tilde{\sigma}_{\mathcal{B}}(T)$ is the set of all eigenvalues of $T$ of finite multiplicity. Also, $\tilde{\sigma}_{\mathcal{B}}(T+K)=\tilde{\sigma}_{\mathcal{B}}(T)$ for every $T$-compact operator $K$, since $\zeta-(T+K)$ is Fredholm of index zero if and only if $\zeta-T$ is so for each $\zeta\in\mathbb{C}$ (see \cite[Theorem IV-5.26]{Kato76}).

Let $(\mathcal{X},(\cdot,\cdot)_{\mathcal{X}})$ be a Hilbert space and $A$ and $\Lambda$ linear operators on $\mathcal{X}$. We make the following assumptions.

\begin{assumption} \label{As:A}
  The operator $A$ is self-adjoint in $\mathcal{X}$ and satisfies
  \begin{align} \label{E:Ab_A_Po}
    (-Au,u)_{\mathcal{X}} \geq C_A\|u\|_{\mathcal{X}}^2, \quad u\in D_{\mathcal{X}}(A).
  \end{align}
  with some constant $C_A>0$.
\end{assumption}

\begin{assumption} \label{As:La_01}
  The following conditions hold:
  \begin{enumerate}
    \item The operator $\Lambda$ is densely defined, closed, and $A$-compact in $\mathcal{X}$.
    \item Let $\mathcal{Y}=N_{\mathcal{X}}(\Lambda)^\perp$ be the orthogonal complement of $N_{\mathcal{X}}(\Lambda)$ in $\mathcal{X}$ and $\mathbb{Q}$ the orthogonal projection from $\mathcal{X}$ onto $\mathcal{Y}$. Then $\mathbb{Q}A\subset A\mathbb{Q}$ in $\mathcal{X}$.
  \end{enumerate}
\end{assumption}

\begin{assumption} \label{As:La_02}
  There exist a Hilbert space $(\mathcal{H},(\cdot,\cdot)_{\mathcal{H}})$, a closed symmetric operator $B_1$ on $\mathcal{H}$, and a bounded self-adjoint operator $B_2$ on $\mathcal{X}$ such that the following conditions hold:
  \begin{enumerate}
    \item The inclusion $\mathcal{X}\subset\mathcal{H}$ holds and $(u,v)_{\mathcal{X}}=(u,v)_{\mathcal{H}}$ for all $u,v\in\mathcal{X}$.
    \item The relation $N_{\mathcal{X}}(\Lambda)=N_{\mathcal{X}}(B_2)$ holds in $\mathcal{X}$ and
    \begin{align*}
      B_2u \in D_{\mathcal{H}}(B_1), \quad B_1B_2u = \Lambda u \in\mathcal{X} \quad\text{for all}\quad u\in D_{\mathcal{X}}(\Lambda).
    \end{align*}
    \item There exists a constant $C>0$ such that
    \begin{alignat}{3}
      (u,B_2u)_{\mathcal{X}} &\geq C\|u\|_{\mathcal{X}}^2, &\quad &u\in \mathcal{Y}, \label{E:uB2u} \\
      \mathrm{Re}(-Au,B_2u)_{\mathcal{X}} &\geq C\|(-A)^{1/2}u\|_{\mathcal{X}}^2, &\quad &u\in D_{\mathcal{X}}(A)\cap\mathcal{Y}. \label{E:AB2u}
    \end{alignat}
  \end{enumerate}
\end{assumption}

Note that $B_2$ is a linear operator on the original space $\mathcal{X}$, not on the auxiliary space $\mathcal{H}$. Also, the operator $B_1$ on $\mathcal{H}$ does not necessarily map $\mathcal{X}$ into itself.

By $\mathbb{Q}A\subset A\mathbb{Q}$ in Assumption \ref{As:La_01} we can consider $\mathbb{Q}A$ as a linear operator
\begin{align*}
  \mathbb{Q}A\colon D_{\mathcal{Y}}(\mathbb{Q}A) \subset \mathcal{Y}\to\mathcal{Y}, \quad D_{\mathcal{Y}}(\mathbb{Q}A) = D_{\mathcal{X}}(A)\cap\mathcal{Y}.
\end{align*}
In what follows, we use the notation $\mathcal{N}=N_{\mathcal{X}}(\Lambda)$ for simplicity. Let $\mathbb{P}=I-\mathbb{Q}$ be the orthogonal projection from $\mathcal{X}$ onto $\mathcal{N}$ (note that $\mathcal{N}$ is closed in $\mathcal{X}$ since $\Lambda$ is closed). Then $\mathbb{P}A\subset A\mathbb{P}$ and we can also consider $\mathbb{P}A$ as a linear operator
\begin{align*}
  \mathbb{P}A\colon D_{\mathcal{N}}(\mathbb{P}A) \subset \mathcal{N} \to \mathcal{N}, \quad D_{\mathcal{N}}(\mathbb{P}A) = D_{\mathcal{X}}(A)\cap\mathcal{N}.
\end{align*}
Note that $\mathbb{Q}A$ and $\mathbb{P}A$ are closed in $\mathcal{Y}$ and in $\mathcal{N}$, respectively. Also, $\mathbb{Q}\Lambda$ is $\mathbb{Q}A$-compact in $\mathcal{Y}$. For $\alpha\in\mathbb{R}$ we define a linear operator $L_\alpha$ on $\mathcal{X}$ by
\begin{align*}
  L_\alpha=A-i\alpha\Lambda, \quad D_{\mathcal{X}}(L_\alpha)=D_{\mathcal{X}}(A)
\end{align*}
and consider $\mathbb{Q}L_\alpha=\mathbb{Q}A-i\alpha\mathbb{Q}\Lambda$ on $\mathcal{Y}$ with domain $D_{\mathcal{Y}}(\mathbb{Q}L_\alpha)=D_{\mathcal{Y}}(\mathbb{Q}A)$.

Our aim is to establish an estimate for the semigroup generated by $L_\alpha$ which yields the enhanced dissipation as $|\alpha|\to\infty$ in abstract settings. Let us give auxiliary lemmas.

\begin{lemma} \label{L:PS_Res}
  Suppose that Assumptions \ref{As:A} and \ref{As:La_01} are satisfied. Then $L_\alpha$ and $\mathbb{Q}L_\alpha$ are closed in $\mathcal{X}$ and in $\mathcal{Y}$, respectively, and
  \begin{align} \label{E:PS_ReSet}
    \rho_{\mathcal{X}}(L_\alpha) = \rho_{\mathcal{Y}}(\mathbb{Q}L_\alpha)\cap\rho_{\mathcal{N}}(\mathbb{P}A)
  \end{align}
  for all $\alpha\in\mathbb{R}$. Moreover, for $\zeta\in\rho_{\mathcal{X}}(L_\alpha)$ and $f\in\mathcal{X}$ we have
  \begin{align} \label{E:PS_ReOp}
    \begin{aligned}
      \mathbb{Q}(\zeta-L_\alpha)^{-1}f &= (\zeta-\mathbb{Q}L_\alpha)^{-1}\mathbb{Q}f, \\
      \mathbb{P}(\zeta-L_\alpha)^{-1}f &= (\zeta-\mathbb{P}A)^{-1}\mathbb{P}f-i\alpha(\zeta-\mathbb{P}A)^{-1}\mathbb{P}\Lambda(\zeta-\mathbb{Q}L_\alpha)^{-1}\mathbb{Q}f.
    \end{aligned}
  \end{align}
\end{lemma}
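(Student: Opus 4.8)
The plan is to exploit that, with respect to the orthogonal decomposition $\mathcal{X}=\mathcal{N}\oplus\mathcal{Y}$ where $\mathcal{N}=N_{\mathcal{X}}(\Lambda)$, the operator $L_\alpha$ is block upper-triangular with diagonal blocks $\mathbb{P}A$ on $\mathcal{N}$ and $\mathbb{Q}L_\alpha$ on $\mathcal{Y}$. First I would record this block structure. Since $A$ is self-adjoint with $\mathbb{Q}A\subset A\mathbb{Q}$, hence $\mathbb{P}A\subset A\mathbb{P}$, the operator $A$ leaves the decomposition invariant, while $\Lambda$ annihilates $\mathcal{N}$, so that $\Lambda u=\Lambda\mathbb{Q}u$ for $u\in D_{\mathcal{X}}(A)$ (note $D_{\mathcal{X}}(A)\subset D_{\mathcal{X}}(\Lambda)$ by $A$-boundedness of $\Lambda$, and both projections preserve $D_{\mathcal{X}}(\Lambda)$ because $\mathcal{N}\subset D_{\mathcal{X}}(\Lambda)$). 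A short computation then gives, for $u\in D_{\mathcal{X}}(A)$,
\begin{align*}
  \mathbb{Q}(\zeta-L_\alpha)u &= (\zeta-\mathbb{Q}L_\alpha)\mathbb{Q}u, \\
  \mathbb{P}(\zeta-L_\alpha)u &= (\zeta-\mathbb{P}A)\mathbb{P}u+i\alpha\mathbb{P}\Lambda\mathbb{Q}u.
\end{align*}
Closedness of $L_\alpha$ and $\mathbb{Q}L_\alpha$ follows from the perturbation theory for relatively compact perturbations: $\Lambda$ is $A$-compact and $\mathbb{Q}\Lambda$ is $\mathbb{Q}A$-compact, hence each has relative bound zero, while $A$ and $\mathbb{Q}A$ are closed.

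Next I would prove the inclusion $\supseteq$ in \eqref{E:PS_ReSet} together with \eqref{E:PS_ReOp} constructively. Given $\zeta\in\rho_{\mathcal{Y}}(\mathbb{Q}L_\alpha)\cap\rho_{\mathcal{N}}(\mathbb{P}A)$ and $f\in\mathcal{X}$, I set $\mathbb{Q}u=(\zeta-\mathbb{Q}L_\alpha)^{-1}\mathbb{Q}f$ and then $\mathbb{P}u=(\zeta-\mathbb{P}A)^{-1}(\mathbb{P}f-i\alpha\mathbb{P}\Lambda\mathbb{Q}u)$. The two block identities above show that $u=\mathbb{P}u+\mathbb{Q}u\in D_{\mathcal{X}}(A)$ solves $(\zeta-L_\alpha)u=f$; uniqueness and boundedness (each factor is bounded, using that $\mathbb{P}\Lambda$ is relatively bounded and that $(\zeta-\mathbb{Q}L_\alpha)^{-1}$ maps into the graph-norm domain) give $\zeta\in\rho_{\mathcal{X}}(L_\alpha)$ and, after substituting $\mathbb{Q}u$ into the formula for $\mathbb{P}u$, exactly the identities \eqref{E:PS_ReOp}.

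The crux is the reverse inclusion $\subseteq$, which for a general block-triangular operator may fail, since the spectrum of the whole can be strictly smaller than the union of the diagonal spectra. Here I would use the self-adjointness of $\mathbb{P}A$: because $A$ is self-adjoint and commutes with the orthogonal projection $\mathbb{P}$, the block $\mathbb{P}A$ is self-adjoint on $\mathcal{N}$, so every $\zeta_0\in\sigma_{\mathcal{N}}(\mathbb{P}A)$ lies in its approximate point spectrum. Choosing unit vectors $v_k\in D_{\mathcal{N}}(\mathbb{P}A)\subset D_{\mathcal{X}}(A)$ with $(\zeta_0-\mathbb{P}A)v_k\to0$, and using $\Lambda v_k=0$ and $Av_k=\mathbb{P}Av_k$, one gets $(\zeta_0-L_\alpha)v_k\to0$, whence $\sigma_{\mathcal{N}}(\mathbb{P}A)\subseteq\sigma_{\mathcal{X}}(L_\alpha)$, i.e. $\rho_{\mathcal{X}}(L_\alpha)\subseteq\rho_{\mathcal{N}}(\mathbb{P}A)$. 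This is the step I expect to be the main obstacle, and the coercivity \eqref{E:Ab_A_Po} entering through the self-adjoint structure of $\mathbb{P}A$ is exactly what makes it go through.

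Finally, knowing $\zeta\in\rho_{\mathcal{X}}(L_\alpha)\cap\rho_{\mathcal{N}}(\mathbb{P}A)$, I would deduce $\zeta\in\rho_{\mathcal{Y}}(\mathbb{Q}L_\alpha)$. Surjectivity of $\zeta-\mathbb{Q}L_\alpha$ follows by applying $\mathbb{Q}$ to $(\zeta-L_\alpha)u=h$ for $h\in\mathcal{Y}$ and using the first block identity. For injectivity, given $w\in D_{\mathcal{Y}}(\mathbb{Q}L_\alpha)$ with $(\zeta-\mathbb{Q}L_\alpha)w=0$, I lift it to $u=w-i\alpha(\zeta-\mathbb{P}A)^{-1}\mathbb{P}\Lambda w\in D_{\mathcal{X}}(A)$; the two block identities show $(\zeta-L_\alpha)u=0$, so $u\in N_{\mathcal{X}}(\zeta-L_\alpha)=\{0\}$ and hence $w=\mathbb{Q}u=0$. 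Since $\mathbb{Q}L_\alpha$ is closed, bijectivity yields a bounded inverse by the closed graph theorem, completing the proof of \eqref{E:PS_ReSet} and \eqref{E:PS_ReOp}.
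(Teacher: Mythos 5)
Your proof is correct and follows essentially the same route as the paper's: the same block-triangular decomposition of $L_\alpha$ with respect to $\mathcal{X}=\mathcal{N}\oplus\mathcal{Y}$, the same Weyl-sequence argument exploiting self-adjointness of $\mathbb{P}A$ (hence empty residual spectrum) to get $\rho_{\mathcal{X}}(L_\alpha)\subseteq\rho_{\mathcal{N}}(\mathbb{P}A)$, and the same lifting constructions for injectivity/surjectivity that yield \eqref{E:PS_ReOp}. One small correction: the coercivity \eqref{E:Ab_A_Po} plays no role in this lemma — only the self-adjointness of $A$ together with $\mathbb{P}A\subset A\mathbb{P}$ is needed, which is what your Weyl-sequence argument actually uses.
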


\begin{proof}
  We see that $L_\alpha=A-i\alpha\Lambda$ is closed in $\mathcal{X}$ since $A$ is closed and $\Lambda$ is $A$-compact in $\mathcal{X}$ (see \cite[Theorem IV-1.11]{Kato76}). Similarly, $\mathbb{Q}L_\alpha=\mathbb{Q}A-i\alpha\mathbb{Q}\Lambda$ is closed in $\mathcal{Y}$.

  Let us show \eqref{E:PS_ReSet} and \eqref{E:PS_ReOp}. Since $A$ is self-adjoint and $\mathbb{P}A\subset A\mathbb{P}$ in $\mathcal{X}$, we see that $\mathbb{P}A$ is self-adjoint in $\mathcal{N}$ and thus the residual spectrum of $\mathbb{P}A$ in $\mathcal{N}$ is empty. Hence for each $\zeta\in\sigma_{\mathcal{N}}(\mathbb{P}A)$ there exists a sequence $\{v_k\}_{k=1}^\infty$ in $D_{\mathcal{N}}(\mathbb{P}A)$ such that
  \begin{align} \label{Pf_PRe:PA_EV}
    \|v_k\|_{\mathcal{X}} = 1 \quad\text{for all}\quad k\in\mathbb{N}, \quad \lim_{k\to\infty}\|(\zeta-\mathbb{P}A)v_k\|_{\mathcal{X}} = 0,
  \end{align}
  which includes the case where $\zeta$ is an eigenvalue of $\mathbb{P}A$ with an eigenvector $v_\zeta$ and $v_k=v_\zeta$ for all $k\in\mathbb{N}$. Then since $L_\alpha v=Av=\mathbb{P}Av$ for $v\in D_{\mathcal{N}}(\mathbb{P}A)$, the sequence $\{v_k\}_{k=1}^\infty$ in $\mathcal{X}$ satisfies \eqref{Pf_PRe:PA_EV} with $\mathbb{P}A$ replaced by $L_\alpha$, which means that $\zeta\in\sigma_{\mathcal{X}}(L_\alpha)$. Hence $\sigma_{\mathcal{N}}(\mathbb{P}A) \subset \sigma_{\mathcal{X}}(L_\alpha)$, i.e. $\rho_{\mathcal{X}}(L_\alpha) \subset \rho_{\mathcal{N}}(\mathbb{P}A)$. Let $\zeta\in\rho_{\mathcal{X}}(L_\alpha)\subset\rho_{\mathcal{N}}(\mathbb{P}A)$. If $(\zeta-\mathbb{Q}L_\alpha)u=0$ for $u\in D_{\mathcal{Y}}(\mathbb{Q}L_\alpha)$, then we see by $\mathbb{P}A\subset A\mathbb{P}$ and $\mathbb{P}u=0$ that
  \begin{align*}
    (\zeta-L_\alpha)u = (\zeta-\mathbb{Q}L_\alpha)u-\mathbb{P}L_\alpha u = (\zeta-\mathbb{Q}L_\alpha)u-\mathbb{P}Au+i\alpha\mathbb{P}\Lambda u = i\alpha\mathbb{P}\Lambda u.
  \end{align*}
  Moreover, we can set $v=-i\alpha(\zeta-\mathbb{P}A)^{-1}\mathbb{P}\Lambda u\in D_{\mathcal{N}}(\mathbb{P}A)$ since $\zeta\in\rho_{\mathcal{N}}(\mathbb{P}A)$ and $\mathbb{P}\Lambda u\in\mathcal{N}$. Then we observe by $L_\alpha v=Av=\mathbb{P}Av$ that
  \begin{align*}
    (\zeta-L_\alpha)(u+v) = (\zeta-L_\alpha)u+(\zeta-\mathbb{P}A)v = i\alpha\mathbb{P}\Lambda u-i\alpha\mathbb{P}\Lambda u = 0,
  \end{align*}
  which yields $u+v=0$ by $\zeta\in\rho_{\mathcal{X}}(L_\alpha)$. Hence $u=\mathbb{Q}(u+v)=0$ and $\zeta-\mathbb{Q}L_\alpha$ is injective. Also, for $f\in\mathcal{Y}\subset\mathcal{X}$ let $w=(\zeta-L_\alpha)^{-1}f \in D_{\mathcal{X}}(L_\alpha)$. Then since
  \begin{align*}
    f = (\zeta-L_\alpha)u+(\zeta-A)v, \quad u = \mathbb{Q}w \in D_{\mathcal{Y}}(\mathbb{Q}L_\alpha), \quad v = \mathbb{P}w \in D_{\mathcal{N}}(\mathbb{P}A)
  \end{align*}
  by $f=(\zeta-L_\alpha)w$ and $L_\alpha v=Av$, we see by $\mathbb{Q}u=u$, $\mathbb{Q}v=0$, and $\mathbb{Q}A\subset A\mathbb{Q}$ that
  \begin{align*}
    f = \mathbb{Q}f = (\zeta-\mathbb{Q}L_\alpha)u+(\zeta-A)\mathbb{Q}v = (\zeta-\mathbb{Q}L_\alpha)u.
  \end{align*}
  Hence $\zeta-\mathbb{Q}L_\alpha$ is surjective and we get $\zeta\in\rho_{\mathcal{Y}}(\mathbb{Q}L_\alpha)$, i.e. $\rho_{\mathcal{X}}(L_\alpha)\subset\rho_{\mathcal{Y}}(\mathbb{Q}L_\alpha)\cap\rho_{\mathcal{N}}(\mathbb{P}A)$. Conversely, let $\zeta\in\rho_{\mathcal{Y}}(\mathbb{Q}L_\alpha)\cap \rho_{\mathcal{N}}(\mathbb{P}A)$. If $(\zeta-L_\alpha)w=0$ for $w\in D_{\mathcal{X}}(L_\alpha)$, then
  \begin{align} \label{Pf_PRe:L_inj}
    (\zeta-L_\alpha)u+(\zeta-A)v = 0, \quad u = \mathbb{Q}w \in D_{\mathcal{Y}}(\mathbb{Q}L_\alpha), \quad v = \mathbb{P}w \in D_{\mathcal{N}}(\mathbb{P}A)
  \end{align}
  by $L_\alpha v=Av$. We apply $\mathbb{Q}$ to \eqref{Pf_PRe:L_inj} and use $\mathbb{Q}u=u$, $\mathbb{Q}v=0$, and $\mathbb{Q}A\subset A\mathbb{Q}$ to find that $(\zeta-\mathbb{Q}L_\alpha)u=0$. Thus $u=0$ by $\zeta\in\rho_{\mathcal{Y}}(\mathbb{Q}L_\alpha)$. Then we also have $(\zeta-\mathbb{P}A)v=0$ by \eqref{Pf_PRe:L_inj}, $\mathbb{P}v=v$, and $\mathbb{P}A\subset A\mathbb{P}$, which yields $v=0$ since $\zeta\in\rho_{\mathcal{N}}(\mathbb{P}A)$. Hence $w=u+v=0$ and $\zeta-L_\alpha$ is injective. Also, for $f\in\mathcal{X}$ let $w=u+v_1+v_2$ with
  \begin{align} \label{Pf_PRe:Def_uv}
    u = (\zeta-\mathbb{Q}L_\alpha)^{-1}\mathbb{Q}f, \quad v_1 = (\zeta-\mathbb{P}A)^{-1}\mathbb{P}f, \quad v_2 = -i\alpha(\zeta-\mathbb{P}A)^{-1}\mathbb{P}\Lambda u.
  \end{align}
  Then since $u\in D_{\mathcal{Y}}(\mathbb{Q}L_\alpha)$ and $v_1,v_2\in D_{\mathcal{N}}(\mathbb{P}A)$, we have $w\in D_{\mathcal{X}}(L_\alpha)$ and
  \begin{align} \label{Pf_PRe:Cal_w}
    \begin{aligned}
      (\zeta-L_\alpha)w &= (\zeta-L_\alpha)u+(\zeta-L_\alpha)(v_1+v_2) \\
      &= (\zeta-\mathbb{Q}L_\alpha)u+i\alpha\mathbb{P}\Lambda u+(\zeta-\mathbb{P}A)(v_1+v_2) = f
    \end{aligned}
  \end{align}
  by $\mathbb{Q}u=u$, $\mathbb{P}v_j=v_j$ for $j=1,2$, $\mathbb{Q}A\subset A\mathbb{Q}$, and $\mathbb{P}A\subset A\mathbb{P}$. Thus $\zeta-L_\alpha$ is surjective, i.e. $\zeta\in\rho_{\mathcal{X}}(L_\alpha)$, and we obtain \eqref{E:PS_ReSet}. Also, when $\zeta\in\rho_{\mathcal{X}}(L_\alpha)$, we see by \eqref{Pf_PRe:Cal_w} that
  \begin{align*}
    \mathbb{Q}(\zeta-L_\alpha)^{-1}f = \mathbb{Q}w = u, \quad \mathbb{P}(\zeta-L_\alpha)^{-1}f = \mathbb{P}w = v_1+v_2, \quad f\in\mathcal{X}
  \end{align*}
  for $w=u+v_1+v_2$ with $u$, $v_1$, and $v_2$ given by \eqref{Pf_PRe:Def_uv}. Hence \eqref{E:PS_ReOp} follows.
\end{proof}

\begin{lemma} \label{L:PS_QP}
  Under Assumptions \ref{As:A}--\ref{As:La_02}, we have
  \begin{align} \label{E:QB2}
    \mathbb{Q}B_2 = B_2 \quad\text{on}\quad \mathcal{X}, \quad \mathrm{Im}(\Lambda u,\mathbb{Q}B_2u)_{\mathcal{X}} = \mathrm{Im}(\Lambda u,B_2u)_{\mathcal{X}} = 0, \quad u\in D_{\mathcal{X}}(\Lambda).
  \end{align}
\end{lemma}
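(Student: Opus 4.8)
The plan is to establish the two assertions of \eqref{E:QB2} in turn, using only the structural properties of $B_1$ and $B_2$ from Assumption \ref{As:La_02}; no use of Assumption \ref{As:A} or of \eqref{E:uB2u}--\eqref{E:AB2u} is needed.

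First I would prove $\mathbb{Q}B_2 = B_2$. By Assumption \ref{As:La_02}(ii) we have $N_{\mathcal{X}}(B_2) = N_{\mathcal{X}}(\Lambda)$, so $\mathcal{Y} = N_{\mathcal{X}}(\Lambda)^\perp = N_{\mathcal{X}}(B_2)^\perp$. Since $B_2$ is bounded and self-adjoint on $\mathcal{X}$, its range is orthogonal to its kernel: for any $u\in\mathcal{X}$ and any $w\in N_{\mathcal{X}}(B_2)$ we have $(B_2u,w)_{\mathcal{X}} = (u,B_2w)_{\mathcal{X}} = 0$. Hence $B_2u\in N_{\mathcal{X}}(B_2)^\perp=\mathcal{Y}$ for every $u\in\mathcal{X}$, and since $\mathbb{Q}$ is the orthogonal projection onto $\mathcal{Y}$ this gives $\mathbb{Q}B_2u = B_2u$, i.e. $\mathbb{Q}B_2=B_2$ on $\mathcal{X}$. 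In particular $(\Lambda u,\mathbb{Q}B_2u)_{\mathcal{X}} = (\Lambda u,B_2u)_{\mathcal{X}}$ for every $u\in D_{\mathcal{X}}(\Lambda)$, so it remains only to show that the latter is real.

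For the reality of $(\Lambda u,B_2u)_{\mathcal{X}}$ I would exploit the factorization of $\Lambda$ through the auxiliary space $\mathcal{H}$. Let $u\in D_{\mathcal{X}}(\Lambda)$ and put $v=B_2u$. By Assumption \ref{As:La_02}(ii), $v\in D_{\mathcal{H}}(B_1)$ and $\Lambda u = B_1v\in\mathcal{X}$. Since both $\Lambda u$ and $v$ lie in $\mathcal{X}\subset\mathcal{H}$, Assumption \ref{As:La_02}(i) allows the inner product to be computed in either space, so $(\Lambda u,B_2u)_{\mathcal{X}} = (\Lambda u,v)_{\mathcal{H}} = (B_1v,v)_{\mathcal{H}}$. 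As $B_1$ is symmetric on $\mathcal{H}$ and $v\in D_{\mathcal{H}}(B_1)$, one has $(B_1v,v)_{\mathcal{H}} = (v,B_1v)_{\mathcal{H}} = \overline{(B_1v,v)_{\mathcal{H}}}$, so $(B_1v,v)_{\mathcal{H}}$ is real and $\mathrm{Im}(\Lambda u,B_2u)_{\mathcal{X}}=0$. Combined with the previous step this yields $\mathrm{Im}(\Lambda u,\mathbb{Q}B_2u)_{\mathcal{X}} = \mathrm{Im}(\Lambda u,B_2u)_{\mathcal{X}} = 0$, which is \eqref{E:QB2}.

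The argument is short, and the one point demanding care — which I regard as the only genuine obstacle — is the bookkeeping between the two Hilbert spaces: the symmetry of $B_1$ delivers a reality statement in the $\mathcal{H}$ inner product, whereas the claim is phrased in the $\mathcal{X}$ inner product. The bridge is exactly Assumption \ref{As:La_02}(i), and it applies precisely because (ii) guarantees that both $\Lambda u = B_1B_2u$ and $B_2u$ lie in $\mathcal{X}$, not merely in $\mathcal{H}$; I would make sure this membership is invoked explicitly so the identification of inner products is justified.
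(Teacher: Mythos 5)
Your proof is correct and follows essentially the same route as the paper's: the self-adjointness of $B_2$ together with $N_{\mathcal{X}}(\Lambda)=N_{\mathcal{X}}(B_2)$ gives $\mathbb{Q}B_2=B_2$ (the paper phrases this as $\mathbb{P}B_2=0$), and then writing $(\Lambda u,B_2u)_{\mathcal{X}}=(B_1B_2u,B_2u)_{\mathcal{H}}$ via Assumption \ref{As:La_02}(i)--(ii) and invoking the symmetry of $B_1$ yields the reality. Your explicit attention to the membership $B_2u\in D_{\mathcal{H}}(B_1)$ and the $\mathcal{X}$-versus-$\mathcal{H}$ inner-product identification matches exactly the points the paper relies on.
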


\begin{proof}
  For $u,v\in\mathcal{X}$ we have $(\mathbb{P}B_2u,v)_{\mathcal{X}}=(u,B_2\mathbb{P}v)_{\mathcal{X}}=0$ since $B_2$ is self-adjoint and $\mathcal{N}=N_{\mathcal{X}}(B_2)$ in $\mathcal{X}$. Hence $\mathbb{P}B_2=0$ and $\mathbb{Q}B_2=B_2$ on $\mathcal{X}$. Also,
  \begin{align*}
    \mathrm{Im}(\Lambda u,\mathbb{Q}B_2u)_{\mathcal{X}} = \mathrm{Im}(B_1B_2u,B_2u)_{\mathcal{X}} = \mathrm{Im}(B_1B_2u,B_2u)_{\mathcal{H}} = 0
  \end{align*}
  for $u\in D_{\mathcal{X}}(\Lambda)$ by Assumption \ref{As:La_02} (i) and (ii), where the last equality holds since $B_1$ is symmetric in $\mathcal{H}$ and $B_2u\in D_{\mathcal{H}}(B_1)$. Thus the second relation of \eqref{E:QB2} is valid.
\end{proof}

\begin{lemma} \label{L:Res_Half}
  Under Assumptions \ref{As:A}--\ref{As:La_02}, for all $\alpha\in\mathbb{R}$ we have
  \begin{align} \label{E:Res_Half}
    \{\zeta\in\mathbb{C} \mid \mathrm{Re}\,\zeta \geq 0\} \subset \rho_{\mathcal{X}}(L_\alpha) \subset \rho_{\mathcal{Y}}(\mathbb{Q}L_\alpha).
  \end{align}
\end{lemma}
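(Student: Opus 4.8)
The plan is to combine the resolvent decomposition \eqref{E:PS_ReSet} of Lemma \ref{L:PS_Res} with a relative-compactness argument on the space $\mathcal{Y}$, where the sign information supplied by Assumption \ref{As:La_02} can be exploited. The second inclusion $\rho_{\mathcal{X}}(L_\alpha)\subset\rho_{\mathcal{Y}}(\mathbb{Q}L_\alpha)$ is immediate, since \eqref{E:PS_ReSet} writes $\rho_{\mathcal{X}}(L_\alpha)$ as the intersection $\rho_{\mathcal{Y}}(\mathbb{Q}L_\alpha)\cap\rho_{\mathcal{N}}(\mathbb{P}A)$, which is contained in each factor. For the first inclusion it therefore suffices, again by \eqref{E:PS_ReSet}, to prove that the closed right half plane is contained in both $\rho_{\mathcal{N}}(\mathbb{P}A)$ and $\rho_{\mathcal{Y}}(\mathbb{Q}L_\alpha)$.

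The factor $\rho_{\mathcal{N}}(\mathbb{P}A)$ is handled at once: as noted in the proof of Lemma \ref{L:PS_Res}, $\mathbb{P}A$ is self-adjoint in $\mathcal{N}$, and since $\mathbb{P}Au=Au$ for $u\in D_{\mathcal{N}}(\mathbb{P}A)$, the positivity \eqref{E:Ab_A_Po} gives $(-\mathbb{P}Au,u)_{\mathcal{X}}\geq C_A\|u\|_{\mathcal{X}}^2$, so $\sigma_{\mathcal{N}}(\mathbb{P}A)\subset(-\infty,-C_A]$ and hence $\{\mathrm{Re}\,\zeta\geq0\}\subset\rho_{\mathcal{N}}(\mathbb{P}A)$. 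For the factor $\rho_{\mathcal{Y}}(\mathbb{Q}L_\alpha)$ I would first reduce to the exclusion of eigenvalues. Since $\mathbb{Q}\Lambda$ is $\mathbb{Q}A$-compact in $\mathcal{Y}$, the invariance of the essential part of the spectrum under a relatively compact perturbation recalled in Section \ref{S:Abst} gives $\tilde{\sigma}_{\mathcal{Y}}(\mathbb{Q}L_\alpha)=\tilde{\sigma}_{\mathcal{Y}}(\mathbb{Q}A)$; as $\mathbb{Q}A$ is self-adjoint with $\sigma_{\mathcal{Y}}(\mathbb{Q}A)\subset(-\infty,-C_A]$ by the same positivity, this set lies in the open left half plane. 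Consequently every $\zeta$ with $\mathrm{Re}\,\zeta\geq0$ satisfies $\zeta\notin\tilde{\sigma}_{\mathcal{Y}}(\mathbb{Q}L_\alpha)$, so $\zeta-\mathbb{Q}L_\alpha$ is Fredholm of index zero and therefore lies in $\rho_{\mathcal{Y}}(\mathbb{Q}L_\alpha)$ as soon as it is injective. The task thus becomes showing that $\mathbb{Q}L_\alpha$ has no eigenvalue in $\{\mathrm{Re}\,\zeta\geq0\}$.

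This last step is where the structural Assumption \ref{As:La_02} enters, and it is the part I expect to carry the real content. Given an eigenpair $\mathbb{Q}L_\alpha u=\zeta u$ with $0\neq u\in D_{\mathcal{X}}(A)\cap\mathcal{Y}$ and $\mathrm{Re}\,\zeta\geq0$, the idea is to test the equation against $B_2u$ rather than against $u$. Using $\mathbb{Q}B_2=B_2$ from Lemma \ref{L:PS_QP} to drop the projection, the pairing reads $(L_\alpha u,B_2u)_{\mathcal{X}}=\zeta(u,B_2u)_{\mathcal{X}}$. On taking real parts the skew term vanishes because $\mathrm{Im}(\Lambda u,B_2u)_{\mathcal{X}}=0$ by Lemma \ref{L:PS_QP}, while \eqref{E:AB2u} together with \eqref{E:Ab_A_Po} yields $\mathrm{Re}(-Au,B_2u)_{\mathcal{X}}\geq C\|(-A)^{1/2}u\|_{\mathcal{X}}^2\geq CC_A\|u\|_{\mathcal{X}}^2>0$, so the real part of the left-hand side is strictly negative. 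On the right-hand side $(u,B_2u)_{\mathcal{X}}$ is real and, by \eqref{E:uB2u}, bounded below by $C\|u\|_{\mathcal{X}}^2>0$, whence $\mathrm{Re}\,\zeta\cdot(u,B_2u)_{\mathcal{X}}\geq0$. These two facts are contradictory, forcing $u=0$.

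This establishes injectivity in the closed right half plane, hence the inclusion $\{\mathrm{Re}\,\zeta\geq0\}\subset\rho_{\mathcal{Y}}(\mathbb{Q}L_\alpha)$, and combined with the $\mathbb{P}A$ factor it gives $\{\mathrm{Re}\,\zeta\geq0\}\subset\rho_{\mathcal{X}}(L_\alpha)$ via \eqref{E:PS_ReSet}, completing the proof. The delicate point throughout is that positivity of the quadratic form is available only on $\mathcal{Y}$ and only through the weight $B_2$, which is precisely why the eigenfunction must be tested against $B_2u$ and why the splitting \eqref{E:PS_ReSet}, isolating the benign self-adjoint block $\mathbb{P}A$ from the genuinely non-self-adjoint block $\mathbb{Q}L_\alpha$, is indispensable.
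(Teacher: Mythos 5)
Your proposal is correct, but it is organized differently from the paper's proof, and the comparison is worth recording. The paper runs the Fredholm argument in the full space $\mathcal{X}$: since $\Lambda$ is $A$-compact, $\tilde{\sigma}_{\mathcal{X}}(L_\alpha)=\tilde{\sigma}_{\mathcal{X}}(A)\subset(-\infty,-C_A]$, and the remaining points of $\sigma_{\mathcal{X}}(L_\alpha)$ are eigenvalues of $L_\alpha$, which are handled by a case split on the eigenvector $w$: if $w\in\mathcal{N}$ then $\zeta$ is an eigenvalue of the self-adjoint negative operator $A$, while if $w\notin\mathcal{N}$ the paper applies $\mathbb{Q}$ to obtain $\zeta\,\mathbb{Q}w=\mathbb{Q}L_\alpha\mathbb{Q}w$ and then tests against $B_2\mathbb{Q}w$ --- exactly your weighted test. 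The first inclusion of \eqref{E:Res_Half} is thus proved directly, and the second follows from \eqref{E:PS_ReSet} at the end. You instead invoke \eqref{E:PS_ReSet} at the outset and treat the two blocks separately: the $\mathbb{P}A$ block by self-adjointness and \eqref{E:Ab_A_Po}, and the $\mathbb{Q}L_\alpha$ block by a Fredholm argument in $\mathcal{Y}$ ($\tilde{\sigma}_{\mathcal{Y}}(\mathbb{Q}L_\alpha)=\tilde{\sigma}_{\mathcal{Y}}(\mathbb{Q}A)$) followed by the same $B_2$-weighted eigenvalue exclusion. Your route buys a cleaner block structure, with no case analysis on eigenvectors, but it needs two facts the paper's route avoids: that $\mathbb{Q}\Lambda$ is $\mathbb{Q}A$-compact in $\mathcal{Y}$ (which the paper does state just before Lemma \ref{L:PS_Res}) and that $\mathbb{Q}A$ is self-adjoint in $\mathcal{Y}$ with $\sigma_{\mathcal{Y}}(\mathbb{Q}A)\subset(-\infty,-C_A]$. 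The latter you assert without justification; it is true, and follows from $\mathbb{Q}A\subset A\mathbb{Q}$ and the self-adjointness of $A$ by exactly the argument the paper uses for $\mathbb{P}A$ in $\mathcal{N}$, but in a careful write-up you should include that one-line verification, since self-adjointness of a restriction to an invariant subspace is the hypothesis that makes both the spectral localization $\sigma_{\mathcal{Y}}(\mathbb{Q}A)\subset(-\infty,-C_A]$ and the identification of $\tilde{\sigma}_{\mathcal{Y}}$ legitimate.
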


\begin{proof}
  It suffices to verify the first inclusion since the second one follows from \eqref{E:PS_ReSet}. Since $A$ is self-adjoint in $\mathcal{X}$ and satisfies \eqref{E:Ab_A_Po}, and since $\Lambda$ is $A$-compact,
  \begin{align} \label{Pf_RH:TSig_L}
    \tilde{\sigma}_{\mathcal{X}}(L_\alpha) = \tilde{\sigma}_{\mathcal{X}}(A-i\alpha\Lambda) = \tilde{\sigma}_{\mathcal{X}}(A) \subset \sigma_{\mathcal{X}}(A) \subset (-\infty,-C_A].
  \end{align}
  Let $\zeta\in\sigma_{\mathcal{X}}(L_\alpha)\setminus\tilde{\sigma}_{\mathcal{X}}(L_\alpha)$. Then $\zeta$ is an eigenvalue of $L_\alpha$. Let $w\in D_{\mathcal{X}}(L_\alpha)$, $w\neq0$ be a corresponding eigenvector. If $w\in\mathcal{N}$, then $(\zeta-A)w=(\zeta-L_\alpha)w=0$ and $\zeta$ is an eigenvalue of $A$. Hence $\zeta\in(-\infty,-C_A]$, since $A$ is self-adjoint in $\mathcal{X}$ and satisfies \eqref{E:Ab_A_Po}. Suppose that $w\not\in\mathcal{N}$, i.e. $u=\mathbb{Q}w\in D_{\mathcal{Y}}(\mathbb{Q}L_\alpha)$ satisfies $u\neq0$. Then we apply $\mathbb{Q}$ to
  \begin{align*}
    0 = (\zeta-L_\alpha)w = (\zeta-L_\alpha)u+(\zeta-A)v, \quad v = \mathbb{P}w \in D_{\mathcal{N}}(\mathbb{P}A)
  \end{align*}
  and use $\mathbb{Q}u=u$, $\mathbb{Q}v=0$, and $\mathbb{Q}A\subset A\mathbb{Q}$ to get $(\zeta-\mathbb{Q}L_\alpha)u=0$, i.e. $\zeta u=\mathbb{Q}L_\alpha u$. Thus
  \begin{align*}
    \zeta(u,B_2u)_{\mathcal{X}} = (\mathbb{Q}L_\alpha u,B_2u)_{\mathcal{X}} = (L_\alpha u,\mathbb{Q}B_2u)_{\mathcal{X}} = (Au,\mathbb{Q}B_2u)_{\mathcal{X}}-i\alpha(\Lambda u,\mathbb{Q}B_2u)_{\mathcal{X}}.
  \end{align*}
  Noting that $(u,B_2u)_{\mathcal{X}}$ is real and positive by \eqref{E:uB2u} and $u\neq0$, we take the real part of the above equality and apply \eqref{E:Ab_A_Po}, \eqref{E:AB2u}, and \eqref{E:QB2}. Then
  \begin{align*}
    (\mathrm{Re}\,\zeta)(u,B_2u)_{\mathcal{X}} &= \mathrm{Re}(Au,B_2u)_{\mathcal{X}} = -\mathrm{Re}(-Au,B_2u)_{\mathcal{X}} \\
    &\leq -C\|(-A)^{1/2}u\|_{\mathcal{X}}^2= -C(-Au,u)_{\mathcal{X}} \leq -C_AC\|u\|_{\mathcal{X}}^2
  \end{align*}
  and we divide both sides by $(u,B_2u)_{\mathcal{X}}>0$ to find that
  \begin{align*}
    \mathrm{Re}\,\zeta \leq -\frac{C_AC\|u\|_{\mathcal{X}}^2}{(u,B_2u)_{\mathcal{X}}} < 0.
  \end{align*}
  Hence $\sigma_{\mathcal{X}}(L_\alpha)\setminus\tilde{\sigma}_{\mathcal{X}}(L_\alpha)$ is contained in $\{\zeta\in\mathbb{C} \mid \mathrm{Re}\,\zeta<0\}$, and we conclude by this fact and \eqref{Pf_RH:TSig_L} that $\sigma_{\mathcal{X}}(L_\alpha)\subset\{\zeta\in\mathbb{C}\mid\mathrm{Re}\,\zeta<0\}$, i.e. the first inclusion of \eqref{E:Res_Half} is valid.
\end{proof}

Now let us give the estimate for the semigroup generated by $L_\alpha=A-i\alpha\Lambda$.

\begin{theorem} \label{T:Semi_BoAl}
  Under Assumptions \ref{As:A}--\ref{As:La_02}, the operator $L_\alpha$ generates an analytic semigroup $\{e^{tL_\alpha}\}_{t\geq0}$ in $\mathcal{X}$ for all $\alpha\in\mathbb{R}$. Moreover, there exist positive constants $C_1$ and $C_2$ depending only on $\|B_2\|_{\mathcal{X}\to\mathcal{X}}$ and the constants appearing in \eqref{E:uB2u} and \eqref{E:AB2u} (and in particular independent of the constant $C_A$ appearing in \eqref{E:Ab_A_Po}) such that
  \begin{align} \label{E:Semi_BoAl}
    \|\mathbb{Q}e^{tL_\alpha}f\|_{\mathcal{X}} \leq C_1e^{-C_2t/\Phi_{\mathcal{Y}}(-\mathbb{Q}L_\alpha)}\|\mathbb{Q}f\|_{\mathcal{X}}, \quad t\geq0, \, f\in\mathcal{X}
  \end{align}
  for all $\alpha\in\mathbb{R}$, where $\Phi_{\mathcal{Y}}(-\mathbb{Q}L_\alpha)=\sup_{\lambda\in\mathbb{R}}\|(i\lambda-\mathbb{Q}L_\alpha)^{-1}\|_{\mathcal{Y}\to\mathcal{Y}}$.
\end{theorem}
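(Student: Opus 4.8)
The plan is to reduce the bound to the part $\mathbb{Q}L_\alpha$ acting on $\mathcal{Y}$, to recognize $-\mathbb{Q}L_\alpha$ as an $m$-accretive operator with respect to the inner product weighted by $B_2$, and then to invoke a quantitative Gearhart--Pr\"uss theorem in that weighted space, transferring the resulting estimate back to the original norm. First I would record the analyticity: $A$ is self-adjoint with $-A\geq C_A>0$ by \eqref{E:Ab_A_Po}, hence sectorial, while $\Lambda$ is $A$-compact and so $A$-bounded with relative bound $0$; the perturbation theory of analytic semigroups (see \cite[Section III.2]{EngNag00}) then gives that $L_\alpha$ generates an analytic semigroup, and the same reasoning applies to $\mathbb{Q}L_\alpha=\mathbb{Q}A-i\alpha\mathbb{Q}\Lambda$ on $\mathcal{Y}$. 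Using the resolvent identity $\mathbb{Q}(\zeta-L_\alpha)^{-1}=(\zeta-\mathbb{Q}L_\alpha)^{-1}\mathbb{Q}$ from \eqref{E:PS_ReOp} together with the Dunford integral representation over a sectorial contour lying in $\rho_{\mathcal{X}}(L_\alpha)\subset\rho_{\mathcal{Y}}(\mathbb{Q}L_\alpha)$ (Lemma \ref{L:Res_Half}), I would establish the commutation
\[
  \mathbb{Q}e^{tL_\alpha}f=e^{t\mathbb{Q}L_\alpha}\mathbb{Q}f,\quad t\geq0,\ f\in\mathcal{X},
\]
so that $\|\mathbb{Q}e^{tL_\alpha}f\|_{\mathcal{X}}\leq\|e^{t\mathbb{Q}L_\alpha}\|_{\mathcal{Y}\to\mathcal{Y}}\|\mathbb{Q}f\|_{\mathcal{X}}$ and everything reduces to bounding the semigroup generated by $\mathbb{Q}L_\alpha$ on $\mathcal{Y}$.

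On $\mathcal{Y}$ I would introduce the weighted inner product $\langle u,v\rangle=(u,B_2v)_{\mathcal{X}}$, which is genuine since $B_2$ is bounded self-adjoint and, by \eqref{E:uB2u}, satisfies $C\|u\|_{\mathcal{X}}^2\leq(u,B_2u)_{\mathcal{X}}\leq\|B_2\|_{\mathcal{X}\to\mathcal{X}}\|u\|_{\mathcal{X}}^2$ for $u\in\mathcal{Y}$; hence $\langle\cdot,\cdot\rangle$ induces a norm equivalent to $\|\cdot\|_{\mathcal{X}}$ with ratio controlled by $\|B_2\|_{\mathcal{X}\to\mathcal{X}}$ and $C$. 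The crucial step is to verify that $-\mathbb{Q}L_\alpha$ is $m$-accretive in this weighted space. For accretivity, for $u\in D_{\mathcal{X}}(A)\cap\mathcal{Y}$ I would use $\mathbb{Q}B_2=B_2$ from Lemma \ref{L:PS_QP} (so $B_2u\in\mathcal{Y}$) and the self-adjointness of $\mathbb{Q}$ to obtain $\mathrm{Re}\langle-\mathbb{Q}L_\alpha u,u\rangle=\mathrm{Re}(-Au,B_2u)_{\mathcal{X}}+\alpha\,\mathrm{Re}\bigl(i(\Lambda u,B_2u)_{\mathcal{X}}\bigr)$; the second term vanishes because $(\Lambda u,B_2u)_{\mathcal{X}}$ is real by Lemma \ref{L:PS_QP}, while the first is $\geq C\|(-A)^{1/2}u\|_{\mathcal{X}}^2\geq0$ by \eqref{E:AB2u}. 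Since the resolvent set is unchanged under an equivalent norm, Lemma \ref{L:Res_Half} shows that the right half-plane lies in $\rho_{\mathcal{Y}}(\mathbb{Q}L_\alpha)$, which together with accretivity yields $m$-accretivity.

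Finally I would apply the weighted Gearhart--Pr\"uss estimate (the version adapted from Wei \cite{Wei21}) to the $m$-accretive operator $-\mathbb{Q}L_\alpha$, obtaining $\|e^{t\mathbb{Q}L_\alpha}\|\leq C_1'e^{-t/\Psi_*}$ in the weighted operator norm, where $\Psi_*=\sup_{\lambda\in\mathbb{R}}\|(i\lambda-\mathbb{Q}L_\alpha)^{-1}\|$ is the pseudospectral bound measured in the weighted norm and $C_1'$ is a universal prefactor. It then remains to convert norms: the equivalence above gives $\|e^{t\mathbb{Q}L_\alpha}\|_{\mathcal{Y}\to\mathcal{Y}}\leq\sqrt{\|B_2\|_{\mathcal{X}\to\mathcal{X}}/C}\,C_1'e^{-t/\Psi_*}$ and, applied to the resolvents, $\Psi_*\leq\sqrt{\|B_2\|_{\mathcal{X}\to\mathcal{X}}/C}\,\Phi_{\mathcal{Y}}(-\mathbb{Q}L_\alpha)$, so that $e^{-t/\Psi_*}\leq\exp\bigl(-C_2t/\Phi_{\mathcal{Y}}(-\mathbb{Q}L_\alpha)\bigr)$ with $C_2=\sqrt{C/\|B_2\|_{\mathcal{X}\to\mathcal{X}}}$. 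Collecting the constants produces \eqref{E:Semi_BoAl} with $C_1,C_2$ depending only on $\|B_2\|_{\mathcal{X}\to\mathcal{X}}$ and $C$.

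The \emph{main obstacle} is the weighted Gearhart--Pr\"uss step and the bookkeeping that keeps the final constants independent of $C_A$. The point is that $m$-accretivity, i.e. the qualitative fact that the right half-plane is resolvent (Lemma \ref{L:Res_Half}), is all one extracts from the spectral gap $C_A$, whereas the quantitative decay rate is produced entirely by the pseudospectral bound $\Phi_{\mathcal{Y}}(-\mathbb{Q}L_\alpha)$; the coercivity \eqref{E:AB2u} enters only through its sign (for accretivity) and through the equivalence constant $C$, never through $C_A$. Proving the weighted Gearhart--Pr\"uss inequality with a universal prefactor, and checking that the norm transfer does not reintroduce any $C_A$-dependence, is the delicate part of the argument.
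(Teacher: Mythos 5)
Your proposal is correct and follows essentially the same route as the paper: reduce to $\mathbb{Q}L_\alpha$ on $\mathcal{Y}$ via the commutation $\mathbb{Q}e^{tL_\alpha}=e^{t\mathbb{Q}L_\alpha}\mathbb{Q}$ from \eqref{E:PS_ReOp}, equip $\mathcal{Y}$ with the $B_2$-weighted inner product, verify $m$-accretivity of $-\mathbb{Q}L_\alpha$ using Lemmas \ref{L:PS_QP} and \ref{L:Res_Half} together with \eqref{E:AB2u}, apply Wei's Gearhart--Pr\"uss theorem, and transfer norms. The only simplification worth noting is that your ``main obstacle'' is not one: Theorem \ref{T:GP_W} is stated for an arbitrary Hilbert space, so it applies verbatim to the weighted space with no adaptation, and the $C_A$-independence of the constants is automatic since only \eqref{E:uB2u}, \eqref{E:AB2u}, and $\|B_2\|_{\mathcal{X}\to\mathcal{X}}$ enter the norm equivalence and accretivity estimates.
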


The proof of \eqref{E:Semi_BoAl} relies on the following Gearhart--Pr\"{u}ss type theorem shown by Wei \cite{Wei21}. A closed operator $S$ on a Hilbert space $(\mathcal{H},(\cdot,\cdot)_\mathcal{H})$ is called $m$-accretive if
\begin{align*}
  \{\zeta\in\mathbb{C} \mid \mathrm{Re}\,\zeta<0\} \subset \rho_{\mathcal{H}}(S), \quad \mathrm{Re}(Su,u)_{\mathcal{H}} \geq 0, \quad u\in D_{\mathcal{H}}(S).
\end{align*}
An $m$-accretive operator $S$ is densely defined and satisfies (see \cite[Section V-3.10]{Kato76})
\begin{align*}
  \{\zeta\in\mathbb{C} \mid \mathrm{Re}\,\zeta>0\} \subset \rho_{\mathcal{H}}(-S), \quad \|(\zeta+S)^{-1}\|_{\mathcal{H}\to\mathcal{H}} \leq \frac{1}{\mathrm{Re}\,\zeta}, \quad \mathrm{Re}\,\zeta>0.
\end{align*}
Thus $-S$ generates a contraction semigroup $\{e^{-tS}\}_{t\geq0}$ in $\mathcal{H}$ by the Hille--Yosida theorem.

\begin{theorem}[{\cite[Theorem 1.3]{Wei21}}] \label{T:GP_W}
  Let $S$ be an $m$-accretive operator on $\mathcal{H}$. Then
  \begin{align*}
    \|e^{-tS}\|_{\mathcal{H}\to\mathcal{H}} \leq e^{-t\Psi_{\mathcal{H}}(S)+\pi/2}, \quad t\geq0,
  \end{align*}
  where $\Psi_{\mathcal{H}}(S)$ is the pseudospectral bound of $S$ in $\mathcal{H}$ given by
  \begin{align*}
    \Psi_{\mathcal{H}}(S) = \inf\{\, \|(i\lambda+S)f\|_{\mathcal{H}} \mid \lambda\in\mathbb{R},\, f\in D_{\mathcal{H}}(S), \, \|f\|_{\mathcal{H}}=1 \}.
  \end{align*}
\end{theorem}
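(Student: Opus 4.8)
The plan is to prove this quantitative Gearhart--Pr\"uss estimate by an energy argument in the time variable combined with the Plancherel theorem, rather than by a direct contour integration of the resolvent (which on a Hilbert space would only yield a worse constant). Throughout I write $u(t)=e^{-tS}f$ and $\Psi=\Psi_{\mathcal{H}}(S)$. First come the reductions. Since $S$ is $m$-accretive, $\{e^{-tS}\}_{t\geq0}$ is a contraction semigroup, so $\|u(t)\|_{\mathcal{H}}\leq\|f\|_{\mathcal{H}}$, and from $\tfrac{d}{dt}\|u(t)\|_{\mathcal{H}}^2=-2\,\mathrm{Re}(Su(t),u(t))_{\mathcal{H}}\leq0$ the function $t\mapsto\|u(t)\|_{\mathcal{H}}$ is nonincreasing. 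If $\Psi=0$ the claim is trivial because $e^{-t\Psi+\pi/2}=e^{\pi/2}\geq1$, so I may assume $\Psi>0$. By density of $D_{\mathcal{H}}(S)$ and the uniform bound $\|e^{-tS}\|\leq1$, it suffices to treat $f\in D_{\mathcal{H}}(S)$, for which $u\in C^1([0,\infty);\mathcal{H})$ with $u(t)\in D_{\mathcal{H}}(S)$ and $u'=-Su$.

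The key step is a weighted differential inequality obtained by localizing in time and applying the Fourier transform. For a Lipschitz scalar function $\chi$ with $\chi(0)=0$ and compact support in $[0,\infty)$, set $w(t)=\chi(t)u(t)$, extended by zero for $t<0$. Then $w(t)\in D_{\mathcal{H}}(S)$, and using $u'=-Su$ together with $\chi(0)=0$ one computes $(\partial_t+S)w=\chi'u$ with no boundary contribution at $t=0$. Taking the Fourier transform in $t$ and using that $S$ is closed gives $(i\lambda+S)\hat w(\lambda)=\widehat{\chi'u}(\lambda)$ with $\hat w(\lambda)\in D_{\mathcal{H}}(S)$ for a.e.\ $\lambda$, so the very definition of the pseudospectral bound yields the pointwise estimate $\Psi\|\hat w(\lambda)\|_{\mathcal{H}}\leq\|(i\lambda+S)\hat w(\lambda)\|_{\mathcal{H}}=\|\widehat{\chi'u}(\lambda)\|_{\mathcal{H}}$. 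Squaring, integrating in $\lambda$, and invoking Plancherel converts this into
\begin{align*}
  \Psi^2\int_0^\infty\chi(t)^2\|u(t)\|_{\mathcal{H}}^2\,dt \leq \int_0^\infty\chi'(t)^2\|u(t)\|_{\mathcal{H}}^2\,dt .
\end{align*}

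Finally I would extract the decay by choosing $\chi$ optimally and exploiting the monotonicity of $G(t)=\|u(t)\|_{\mathcal{H}}^2$. Keeping $G$ inside both integrals is essential: splitting off $\sup G$ and $\inf G$ over a window and using the first Dirichlet eigenvalue $(\pi/T)^2$ of $-\partial_t^2$ gives only a far-from-sharp rate, so instead one tests against a profile adapted to the conjectured decay $G(t)\sim e^{-2\Psi t}$ (a damped trigonometric function), for which $\chi'^2-\Psi^2\chi^2$ has a single sign change and the accumulated phase is exactly $\pi/2$. Combined with the semigroup identity $u(t+\cdot)=e^{-\cdot S}u(t)$, which lets the same inequality be applied from any starting time, this forces $\|u(t)\|_{\mathcal{H}}\leq e^{\pi/2-\Psi t}\|f\|_{\mathcal{H}}$; the multiplicative factor $e^{\pi/2}$ records the optimal boundary phase, and the contraction bound $\|e^{-tS}\|\leq1$ closes the small-$t$ endpoint.

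I expect the main obstacle to be twofold. The first, more technical, issue is the rigorous Fourier-transform step: one must justify that $\hat w(\lambda)\in D_{\mathcal{H}}(S)$ for a.e.\ $\lambda$ so that the defining inequality for $\Psi$ applies, and that $S$ commutes with the $\mathcal{H}$-valued integral, both of which rest on the closedness of $S$ and the $D_{\mathcal{H}}(S)$-regularity of $u$ for $f\in D_{\mathcal{H}}(S)$. The second, genuinely sharper, difficulty is extracting the precise constant $e^{\pi/2}$ and the exact rate $\Psi$ from the weighted inequality: this requires the \emph{optimal} admissible test function (equivalently, the sharp constant in the associated one-dimensional Hardy/eigenvalue problem with the correct boundary behavior at the origin) rather than any convenient choice, and this optimization is where the real work of the proof lies.
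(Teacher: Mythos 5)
First, a point of reference: the paper never proves this statement at all --- Theorem \ref{T:GP_W} is imported verbatim from \cite[Theorem 1.3]{Wei21} and used as a black box in the proof of Theorem \ref{T:Semi_BoAl} --- so your attempt has to stand on its own, and on its own it is incomplete. Your first two steps are sound: for $f\in D_{\mathcal{H}}(S)$, $u(t)=e^{-tS}f$, and Lipschitz $\chi$ with $\chi(0)=0$ and compact support, the identity $(\partial_t+S)(\chi u)=\chi'u$, the closedness of $S$ together with $\|Su(t)\|_{\mathcal{H}}\leq\|Sf\|_{\mathcal{H}}$ (to commute $S$ with the Fourier integral), the pointwise bound $\Psi\|\widehat{\chi u}(\lambda)\|_{\mathcal{H}}\leq\|(i\lambda+S)\widehat{\chi u}(\lambda)\|_{\mathcal{H}}$ with $\Psi=\Psi_{\mathcal{H}}(S)$, and vector-valued Plancherel do yield
\begin{align*}
  \Psi^2\int_0^\infty \chi(t)^2\,G(t)\,dt \leq \int_0^\infty \chi'(t)^2\,G(t)\,dt, \qquad G(t)=\|u(t)\|_{\mathcal{H}}^2 .
\end{align*}
The genuine gap is everything after this display. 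The entire content of the theorem is the exact rate $\Psi$ and the exact constant $e^{\pi/2}$, and your third step never derives them: you assert that testing with ``a damped trigonometric function'' whose ``accumulated phase is exactly $\pi/2$'' \emph{forces} $\|u(t)\|_{\mathcal{H}}\leq e^{\pi/2-\Psi t}\|f\|_{\mathcal{H}}$, and you yourself concede that this optimization ``is where the real work of the proof lies.'' That is an announcement of the key step, not a proof of it; as written, no quantitative decay at all has been extracted from the displayed inequality.

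The gap is substantial, not a routine verification, and your description of how to close it is not accurate. No \emph{single} test function can work: what the inequality gives, quantified over \emph{all} admissible $\chi$, is a Sturm--Liouville nonnegativity (disconjugacy) condition for $(G\phi')'+\Psi^2G\phi=0$ with $\phi(0)=0$; in Pr\"{u}fer form the phase $\omega$, with $\omega'=\Psi\bigl(\cos^2\omega/G+G\sin^2\omega\bigr)$ and $\omega(0)=0$, may never reach $\pi$, and since $G\leq G(0)$ one checks it may in fact never cross $\pi/2$. Maximizing $G(T)$ subject to this constraint and to the monotonicity of $G$ is an optimal-control problem whose solution has a \emph{three-phase} extremal structure (minimal phase speed $\Psi$ while $\omega\leq\pi/4$, then $\Psi\sin 2\omega$ while $\cot\omega\geq G(T)$, then $\Psi\bigl(\cos^2\omega/G(T)+G(T)\sin^2\omega\bigr)$), and integrating the phase budget gives $\Psi T\leq \pi/4-\tfrac{1}{2}\ln G(T)+\pi/4$, i.e. precisely $\|u(T)\|_{\mathcal{H}}\leq e^{\pi/2-\Psi T}\|f\|_{\mathcal{H}}$. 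So your strategy \emph{can} be completed, but the completion --- the disconjugacy reformulation plus this variational analysis, with the constant $e^{\pi/2}$ arising as the total Pr\"{u}fer phase $\pi/4+0+\pi/4$, not as a property of one ``damped trigonometric'' profile --- is exactly the theorem, and it is missing from your proposal. Without it, the only bounds reachable by the shortcuts you mention (one test function per window plus $\sup/\inf$ of $G$) are of the provably weaker form $e^{-c\Psi t}$ with $c<1$, which is not the statement to be proved.
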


\begin{proof}[Proof of Theorem \ref{T:Semi_BoAl}]
  Fix $\delta\in(\pi/2,\pi)$ and let $\Sigma=\{\zeta\in\mathbb{C} \mid |\arg\zeta|<\delta, \, \zeta\neq0\}$. Since $A$ is self-adjoint in $\mathcal{X}$ and satisfies \eqref{E:Ab_A_Po}, there exists a constant $C>0$ such that
  \begin{align*}
    \Sigma \subset \rho_{\mathcal{X}}(A), \quad \|(\zeta-A)^{-1}\|_{\mathcal{X}\to\mathcal{X}} \leq \frac{C}{|\zeta|}, \quad \zeta \in \Sigma.
  \end{align*}
  Also, for each $\alpha\in\mathbb{R}$, since $i\alpha\Lambda$ is closed and $A$-compact in $\mathcal{X}$, it is $A$-bounded with $A$-bound zero (see \cite[Lemma III.2.16]{EngNag00}). Thus, by \cite[Lemma III.2.6]{EngNag00}, there exist constants $r_\alpha,C_\alpha>0$ depending on $\alpha$ such that $\Sigma\cap\{\zeta\in\mathbb{C}\mid|\zeta|>r_\alpha\}\subset\rho_{\mathcal{X}}(L_\alpha)$ and
  \begin{align*}
    \|(\zeta-L_\alpha)^{-1}\|_{\mathcal{X}\to\mathcal{X}} \leq \frac{C_\alpha}{|\zeta|}, \quad \zeta \in \Sigma\cap\{\zeta\in\mathbb{C} \mid |\zeta| > r_\alpha\}.
  \end{align*}
  By this fact, we see that $L_\alpha-\gamma_\alpha$ is sectorial for a sufficiently large $\gamma_\alpha>0$, i.e.
  \begin{align} \label{Pf_SBA:Re_La}
    \begin{gathered}
      \Sigma_\alpha+\gamma_\alpha \subset \rho_{\mathcal{X}}(L_\alpha), \quad \Sigma_\alpha = \{\zeta\in\mathbb{C} \mid |\arg\zeta|<\delta_\alpha, \, \zeta\neq0\}, \\
      \|(\zeta+\gamma_\alpha-L_\alpha)^{-1}\|_{\mathcal{X}\to\mathcal{X}} \leq \frac{C'_\alpha}{|\zeta|}, \quad \zeta\in\Sigma_\alpha
    \end{gathered}
  \end{align}
  with some angle $\delta_\alpha\in(\pi/2,\pi)$ and constant $C'_\alpha>0$ depending on $\alpha$. Hence $L_\alpha$ generates an analytic semigroup $\{e^{tL_\alpha}\}_{t\geq0}$ in $\mathcal{X}$ represented by the Dunford integral
  \begin{align} \label{Pf_SBA:Semi_La}
    e^{tL_\alpha}f = \frac{1}{2\pi i}\int_{\Gamma_\alpha}e^{t(\zeta+\gamma_\alpha)}(\zeta+\gamma_\alpha-L_\alpha)^{-1}f\,d\zeta, \quad t>0, \, f\in\mathcal{X},
  \end{align}
  where $\Gamma_\alpha$ is a piecewise smooth curve in $\Sigma_\alpha$ going from $\infty e^{-i\sigma}$ to $\infty e^{i\sigma}$ with $\sigma\in(0,\delta_\alpha)$. Also, since $\rho_{\mathcal{X}}(L_\alpha)\subset\rho_{\mathcal{Y}}(\mathbb{Q}L_\alpha)$ by \eqref{E:PS_ReSet} and
  \begin{align*}
    \|(\zeta-\mathbb{Q}L_\alpha)^{-1}\|_{\mathcal{Y}\to\mathcal{Y}} = \|\mathbb{Q}(\zeta-L_\alpha)^{-1}\|_{\mathcal{X}\to\mathcal{X}} \leq \|(\zeta-L_\alpha)^{-1}\|_{\mathcal{X}\to\mathcal{X}}, \quad \zeta\in\rho_{\mathcal{X}}(L_\alpha)
  \end{align*}
  by the first equality of \eqref{E:PS_ReOp}, it follows from \eqref{Pf_SBA:Re_La} that
  \begin{align*}
    \Sigma_\alpha+\gamma_\alpha \subset \rho_{\mathcal{Y}}(\mathbb{Q}L_\alpha), \quad \|(\zeta+\gamma_\alpha-\mathbb{Q}L_\alpha)^{-1}\|_{\mathcal{Y}\to\mathcal{Y}} \leq \frac{C'_\alpha}{|\zeta|}, \quad \zeta\in\Sigma_\alpha.
  \end{align*}
  Hence we can define the analytic semigroup generated by $\mathbb{Q}L_\alpha$ in $\mathcal{Y}$ by
  \begin{align} \label{Pf_SBA:Semi_QL}
    e^{t\mathbb{Q}L_\alpha}g = \frac{1}{2\pi i}\int_{\Gamma_\alpha}e^{t(\zeta+\gamma_\alpha)}(\zeta+\gamma_\alpha-\mathbb{Q}L_\alpha)^{-1}g\,d\zeta, \quad t>0, \, g\in\mathcal{Y},
  \end{align}
  where $\Gamma_\alpha$ is the same curve as in \eqref{Pf_SBA:Semi_La}. By \eqref{E:PS_ReOp}, \eqref{Pf_SBA:Semi_La}, and \eqref{Pf_SBA:Semi_QL}, we get
  \begin{align} \label{Pf_SBA:QetL}
    \mathbb{Q}e^{tL_\alpha}f = e^{t\mathbb{Q}L_\alpha}\mathbb{Q}f, \quad t\geq0, \, f\in\mathcal{X},
  \end{align}
  where we included the trivial case $t=0$. Thus, to prove \eqref{E:Semi_BoAl}, it suffices to estimate the right-hand side in $\mathcal{Y}$. In what follows, we write $C$ and $C'$ for general positive constants depending only on $\|B_2\|_{\mathcal{X}\to\mathcal{X}}$ and the constants appearing in \eqref{E:uB2u} and \eqref{E:AB2u}.

  For $u,v\in\mathcal{Y}$ let $(u,v)_{\mathcal{Y}'}=(u,B_2v)_{\mathcal{X}}$. Then since $B_2$ is bounded and self-adjoint in $\mathcal{X}$ and satisfies \eqref{E:uB2u}, we have
  \begin{align} \label{Pf_SBA:Equiv}
    (u,u)_{\mathcal{Y}'}\in\mathbb{R}, \quad C\|u\|_{\mathcal{X}}^2 \leq (u,u)_{\mathcal{Y}'} \leq C'\|u\|_{\mathcal{X}}^2, \quad u\in\mathcal{Y}.
  \end{align}
  Hence $(\cdot,\cdot)_{\mathcal{Y}'}$ defines an inner product on $\mathcal{Y}$ equivalent to $(\cdot,\cdot)_{\mathcal{X}}$ restricted on $\mathcal{Y}$. We write $\mathcal{Y}'$ for the Hilbert space $\mathcal{Y}$ equipped with inner product $(\cdot,\cdot)_{\mathcal{Y}'}$. Then
  \begin{align} \label{Pf_SBA:Set_QL}
    \{\zeta\in\mathbb{C} \mid \mathrm{Re}\,\zeta \leq 0\} \subset \rho_{\mathcal{Y}}(-\mathbb{Q}L_\alpha) = \rho_{\mathcal{Y}'}(-\mathbb{Q}L_\alpha)
  \end{align}
  by \eqref{E:Res_Half}. Moreover, for $u\in D_{\mathcal{Y}'}(\mathbb{Q}L_\alpha)=D_{\mathcal{Y}}(\mathbb{Q}L_\alpha)=D_{\mathcal{X}}(A)\cap\mathcal{Y}$, since
  \begin{align*}
    (-\mathbb{Q}L_\alpha u,u)_{\mathcal{Y}'} &= (-\mathbb{Q}Au,B_2u)_{\mathcal{X}}+i\alpha(\mathbb{Q}\Lambda u,B_2u)_{\mathcal{X}} \\
    &= (-Au,\mathbb{Q}B_2u)_{\mathcal{X}}+i\alpha(\Lambda u,\mathbb{Q}B_2u)_{\mathcal{X}},
  \end{align*}
  we take the real part of this equality and apply \eqref{E:AB2u} and \eqref{E:QB2} to find that
  \begin{align*}
    \mathrm{Re}(-\mathbb{Q}L_\alpha u,u)_{\mathcal{Y}'} = \mathrm{Re}(-Au,B_2u)_{\mathcal{X}} \geq C\|(-A)^{1/2}u\|_{\mathcal{X}}^2 \geq 0, \quad u\in D_{\mathcal{Y}'}(\mathbb{Q}L_\alpha).
  \end{align*}
  Hence $-\mathbb{Q}L_\alpha$ is $m$-accretive in $\mathcal{Y}'$ and we can use Theorem \ref{T:GP_W} to get
  \begin{align} \label{Pf_SBA:Semi_Yp}
    \|e^{t\mathbb{Q}L_\alpha}\|_{\mathcal{Y}'\to\mathcal{Y}'} \leq e^{-t\Psi_{\mathcal{Y}'}(-\mathbb{Q}L_\alpha)+\pi/2}, \quad t\geq0,
  \end{align}
  where $\Psi_{\mathcal{Y}'}(-\mathbb{Q}L_\alpha)$ is the pseudospectral bound of $-\mathbb{Q}L_\alpha$ in $\mathcal{Y}'$ given by
  \begin{align*}
    \Psi_{\mathcal{Y}'}(-\mathbb{Q}L_\alpha) = \inf\{\|(i\lambda-\mathbb{Q}L_\alpha)g\|_{\mathcal{Y}'} \mid \lambda\in\mathbb{R}, \, g\in D_{\mathcal{Y}'}(\mathbb{Q}L_\alpha), \, \|g\|_{\mathcal{Y}'} = 1\}.
  \end{align*}
  Since $i\mathbb{R}\subset\rho_{\mathcal{Y}'}(-\mathbb{Q}L_\alpha)$ by \eqref{Pf_SBA:Set_QL}, we easily find that
  \begin{align*}
    \Psi_{\mathcal{Y}'}(-\mathbb{Q}L_\alpha) \geq \left(\sup_{\lambda\in\mathbb{R}}\|(i\lambda-\mathbb{Q}L_\alpha)^{-1}\|_{\mathcal{Y}'\to\mathcal{Y}'}\right)^{-1}.
  \end{align*}
  Moreover, we observe by \eqref{Pf_SBA:Equiv} that
  \begin{align*}
    \sup_{\lambda\in\mathbb{R}}\|(i\lambda-\mathbb{Q}L_\alpha)^{-1}\|_{\mathcal{Y}'\to\mathcal{Y}'} \leq C\sup_{\lambda\in\mathbb{R}}\|(i\lambda-\mathbb{Q}L_\alpha)^{-1}\|_{\mathcal{Y}\to\mathcal{Y}} = C\Phi_{\mathcal{Y}}(-\mathbb{Q}L_\alpha).
  \end{align*}
  Thus $\Psi_{\mathcal{Y}'}(-\mathbb{Q}L_\alpha)\geq C/\Phi_{\mathcal{Y}}(-\mathbb{Q}L_\alpha)>0$. By this inequality, \eqref{Pf_SBA:Equiv}, and \eqref{Pf_SBA:Semi_Yp},
  \begin{align*}
    \|e^{t\mathbb{Q}L_\alpha}g\|_{\mathcal{X}} &\leq C\|e^{t\mathbb{Q}L_\alpha}g\|_{\mathcal{Y}'} \leq Ce^{-t\Psi_{\mathcal{Y}'}(-\mathbb{Q}L_\alpha)}\|g\|_{\mathcal{Y}'} \leq Ce^{-Ct/\Phi_{\mathcal{Y}}(-\mathbb{Q}L_\alpha)}\|g\|_{\mathcal{X}}
  \end{align*}
  for all $t\geq0$ and $g\in\mathcal{Y}$, and we obtain \eqref{E:Semi_BoAl} by this inequality and \eqref{Pf_SBA:QetL}.
\end{proof}

Under additional assumptions, we can also show that $\Phi_{\mathcal{Y}}(-\mathbb{Q}L_\alpha)$ converges to zero as $|\alpha|\to\infty$. Recall that we write $N_{\mathcal{X}}(\Lambda)=\mathcal{N}$ and $R_{\mathcal{X}}(\Lambda)$ for the kernel and range of $\Lambda$ in $\mathcal{X}$. Also, let $\Lambda^\ast$ be the adjoint of $\Lambda$ in $\mathcal{X}$.

\begin{theorem} \label{T:Phi_Conv}
  Under Assumptions \ref{As:A}--\ref{As:La_02}, suppose further that
  \begin{itemize}
    \item[\textrm{(a)}] $A$ has a compact resolvent in $\mathcal{X}$,
    \item[\textrm{(b)}] there exists a constant $C>0$ such that
    \begin{align*}
      |(\Lambda u,u)_{\mathcal{X}}| \leq C(-Au,u)_{\mathcal{X}}, \quad u \in D_{\mathcal{X}}(A),
    \end{align*}
    \item[\textrm{(c)}] $D_{\mathcal{X}}(A)\subset D_{\mathcal{X}}(\Lambda^\ast)$ and $N_{\mathcal{X}}(\Lambda)\subset D_{\mathcal{X}}(\Lambda^\ast)$,
    \item[\textrm{(d)}] $N_{\mathcal{X}}(\Lambda)\cap R_{\mathcal{X}}(\Lambda)=\{0\}$, and
    \item[\textrm{(e)}] $\Lambda$ does not have eigenvalues in $\mathbb{R}\setminus\{0\}$.
  \end{itemize}
  Then we have
  \begin{align} \label{E:Phi_Conv}
    \lim_{|\alpha|\to\infty}\Phi_{\mathcal{Y}}(-\mathbb{Q}L_\alpha) = \lim_{|\alpha|\to\infty}\sup_{\lambda\in\mathbb{R}}\|(i\lambda-\mathbb{Q}L_\alpha)^{-1}\|_{\mathcal{Y}\to\mathcal{Y}} = 0
  \end{align}
  and $\lim_{|\alpha|\to\infty}\sup_{t\geq\tau}\|\mathbb{Q}e^{tL_\alpha}\|_{\mathcal{X}\to\mathcal{X}}=0$ for each $\tau>0$.
\end{theorem}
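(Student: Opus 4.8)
The plan is to argue by contradiction and to reduce the blow-up of the pseudospectral bound to the eigenvalue problem for $\Lambda$, which is exactly what hypotheses (d) and (e) control. Since $i\mathbb{R}\subset\rho_{\mathcal{Y}}(\mathbb{Q}L_\alpha)$ by \eqref{E:Res_Half}, the quantity $\Phi_{\mathcal{Y}}(-\mathbb{Q}L_\alpha)$ is well defined for each $\alpha$, and if the first limit in \eqref{E:Phi_Conv} failed there would be $\delta>0$ and sequences $\alpha_k\in\mathbb{R}$ with $|\alpha_k|\to\infty$, $\lambda_k\in\mathbb{R}$, and $u_k\in D_{\mathcal{Y}}(\mathbb{Q}L_{\alpha_k})=D_{\mathcal{X}}(A)\cap\mathcal{Y}$ with $\|u_k\|_{\mathcal{X}}=1$ such that $f_k:=(i\lambda_k-\mathbb{Q}L_{\alpha_k})u_k$ stays bounded in $\mathcal{X}$. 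First I would run the energy estimate from the proof of Theorem \ref{T:Semi_BoAl}: taking the $\mathcal{Y}'$-inner product (with $(u,v)_{\mathcal{Y}'}=(u,B_2v)_{\mathcal{X}}$) of the quasimode relation with $u_k$ and reading off the real part, the skew contribution drops by \eqref{E:QB2}, and \eqref{E:AB2u} together with \eqref{E:uB2u} yields a uniform bound on $\|(-A)^{1/2}u_k\|_{\mathcal{X}}$. Because $A$ has compact resolvent by (a), the embedding $D_{\mathcal{X}}((-A)^{1/2})\hookrightarrow\mathcal{X}$ is compact, so after passing to a subsequence $u_k\to u_\ast$ strongly in $\mathcal{X}$ with $u_\ast\in\mathcal{Y}$ (since $\mathcal{Y}$ is closed) and $\|u_\ast\|_{\mathcal{X}}=1$; in particular $u_\ast\neq0$.

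Next I would control the rescaled frequency $\beta_k=\lambda_k/\alpha_k$. Taking instead the $\mathcal{X}$-inner product of $(i\lambda_k-\mathbb{Q}L_{\alpha_k})u_k=f_k$ with $u_k$, using $\mathbb{Q}u_k=u_k$ so that $(\mathbb{Q}Au_k,u_k)_{\mathcal{X}}=(Au_k,u_k)_{\mathcal{X}}\in\mathbb{R}$ and $(\mathbb{Q}\Lambda u_k,u_k)_{\mathcal{X}}=(\Lambda u_k,u_k)_{\mathcal{X}}$, and extracting the imaginary part, I obtain $\lambda_k\|u_k\|_{\mathcal{X}}^2+\alpha_k\mathrm{Re}(\Lambda u_k,u_k)_{\mathcal{X}}=\mathrm{Im}(f_k,u_k)_{\mathcal{X}}$. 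Dividing by $\alpha_k$ and invoking hypothesis (b) to bound $|(\Lambda u_k,u_k)_{\mathcal{X}}|\le C(-Au_k,u_k)_{\mathcal{X}}=C\|(-A)^{1/2}u_k\|_{\mathcal{X}}^2$, which is already controlled, shows that $\beta_k$ is bounded; hence $\beta_k\to\beta_\ast\in\mathbb{R}$ along a further subsequence.

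The core of the argument is the passage to the limit. I would test the quasimode relation against $\phi\in D_{\mathcal{X}}(A)\cap\mathcal{Y}$, which lies in $D_{\mathcal{X}}(\Lambda^\ast)$ by (c), divide by $i\alpha_k$, and move $A$ onto $\phi$ using self-adjointness. The terms $\alpha_k^{-1}(f_k,\phi)_{\mathcal{X}}$ and $\alpha_k^{-1}(u_k,A\phi)_{\mathcal{X}}$ vanish as $k\to\infty$, while $(u_k,\Lambda^\ast\phi)_{\mathcal{X}}\to(u_\ast,\Lambda^\ast\phi)_{\mathcal{X}}$ and the $\lambda_k$-term converges by $\beta_k\to\beta_\ast$ and $u_k\to u_\ast$, so the surviving terms combine to $(u_\ast,\Lambda^\ast\phi)_{\mathcal{X}}=-\beta_\ast(u_\ast,\phi)_{\mathcal{X}}$ for every such $\phi$. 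Interpreting this identity through the adjoint and the closedness of $\Lambda$ identifies $u_\ast$ as an eigenfunction in the sense $\mathbb{Q}\Lambda u_\ast=-\beta_\ast u_\ast$. I would then close the argument by a dichotomy on $\beta_\ast$: if $\beta_\ast\neq0$, then $u_\ast$ is an eigenfunction of $\Lambda$ for the nonzero real eigenvalue $-\beta_\ast$, contradicting (e); if $\beta_\ast=0$, then $\mathbb{Q}\Lambda u_\ast=0$ forces $\Lambda u_\ast\in N_{\mathcal{X}}(\Lambda)\cap R_{\mathcal{X}}(\Lambda)=\{0\}$ by (d), so $\Lambda u_\ast=0$ and $u_\ast\in N_{\mathcal{X}}(\Lambda)\cap\mathcal{Y}=\{0\}$, contradicting $u_\ast\neq0$. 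Either way we reach a contradiction, proving the first limit in \eqref{E:Phi_Conv}.

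Finally, the semigroup statement is immediate: combining the first limit in \eqref{E:Phi_Conv} with the decay estimate \eqref{E:Semi_BoAl} of Theorem \ref{T:Semi_BoAl}, whose constants $C_1,C_2$ are independent of $\alpha$, gives $\sup_{t\ge\tau}\|\mathbb{Q}e^{tL_\alpha}\|_{\mathcal{X}\to\mathcal{X}}\le C_1e^{-C_2\tau/\Phi_{\mathcal{Y}}(-\mathbb{Q}L_\alpha)}\to0$ as $|\alpha|\to\infty$. I expect the hard part to be the rigorous identification of the weak limit in the third step. The test functions only range over $\phi\in D_{\mathcal{X}}(A)\cap\mathcal{Y}$, so one genuinely obtains information about the $\mathcal{Y}$-component $\mathbb{Q}\Lambda u_\ast$ rather than the full $\Lambda u_\ast$ (testing the $\mathcal{Y}$-valued equation against $N_{\mathcal{X}}(\Lambda)$ returns no information), which is precisely why (d) is needed to promote $\mathbb{Q}\Lambda u_\ast=0$ to $\Lambda u_\ast=0$ in the degenerate case. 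Justifying that $u_\ast\in D_{\mathcal{X}}(\Lambda)$ and that the limiting relation is the genuine operator identity $\mathbb{Q}\Lambda u_\ast=-\beta_\ast u_\ast$ rather than a merely distributional statement—using (c) to handle $\Lambda^\ast$ on the available test functions and, if necessary, the form bound from (b) to place $D_{\mathcal{X}}((-A)^{1/2})$ inside $D_{\mathcal{X}}(\Lambda)$—is the delicate point of the proof.
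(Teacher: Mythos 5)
Your proposal follows the paper's proof (which is itself Step 3 of the proof of \cite[Theorem 2.4]{IbMaMa19}) essentially step for step: the contradiction setup, the $B_2$-weighted energy identity giving a uniform bound on $\|(-A)^{1/2}u_k\|_{\mathcal{X}}$, compactness of $(-A)^{-1/2}$ giving a strong limit $u_\ast\in\mathcal{Y}\setminus\{0\}$, boundedness of $\beta_k=\lambda_k/\alpha_k$ via condition (b), the passage to the limit against test functions, the reduction to (d) and (e), and the deduction of the semigroup statement from \eqref{E:Phi_Conv} and \eqref{E:Semi_BoAl}. All the quantitative steps are sound. However, the endgame contains a genuine error: in the case $\beta_\ast\neq0$ you assert that $u_\ast$ itself is an eigenfunction of $\Lambda$ with eigenvalue $-\beta_\ast$. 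This does not follow from what you have established. The limit identity only gives $\mathbb{Q}\Lambda u_\ast=-\beta_\ast u_\ast$, i.e. $\Lambda u_\ast=-\beta_\ast u_\ast+\mathbb{P}\Lambda u_\ast$, and the component $\mathbb{P}\Lambda u_\ast\in N_{\mathcal{X}}(\Lambda)$ has no reason to vanish; indeed your own closing paragraph stresses that the test functions only see $\mathbb{Q}\Lambda u_\ast$, yet this branch of your dichotomy silently discards $\mathbb{P}\Lambda u_\ast$. The paper's remedy is to apply $\Lambda$ once more: since $u_\ast\in D_{\mathcal{X}}(\Lambda)$ and $\mathbb{P}\Lambda u_\ast\in N_{\mathcal{X}}(\Lambda)\subset D_{\mathcal{X}}(\Lambda)$, one gets $\Lambda^2u_\ast=-\beta_\ast\Lambda u_\ast$, and $\Lambda u_\ast\neq0$ because $u_\ast\in\mathcal{Y}\setminus\{0\}$; so the eigenvector contradicting (e) is $\Lambda u_\ast$, not $u_\ast$. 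With this correction your dichotomy is no longer needed: (e) forces $\beta_\ast=0$, and then $\Lambda u_\ast\in N_{\mathcal{X}}(\Lambda)\cap R_{\mathcal{X}}(\Lambda)=\{0\}$ by (d), which is exactly your degenerate case.

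The second issue is the step you yourself flag as delicate and leave open: upgrading the tested relation $(u_\ast,\Lambda^\ast\phi)_{\mathcal{X}}=-\beta_\ast(u_\ast,\phi)_{\mathcal{X}}$, $\phi\in D_{\mathcal{X}}(A)\cap\mathcal{Y}$, to the genuine statement $u_\ast\in D_{\mathcal{X}}(\Lambda)$ with $\mathbb{Q}\Lambda u_\ast=-\beta_\ast u_\ast$. The tools you list ((b), closedness of $\Lambda$) do not suffice for this; in particular (b) is a form bound and gives no control of $D_{\mathcal{X}}(\Lambda)$, and an inclusion $D_{\mathcal{X}}((-A)^{1/2})\subset D_{\mathcal{X}}(\Lambda)$ is neither available nor used in the paper. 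The missing device is the following use of condition (c): since $R_{\mathcal{X}}(\mathbb{P})=N_{\mathcal{X}}(\Lambda)\subset D_{\mathcal{X}}(\Lambda^\ast)$, the operator $\Lambda^\ast\mathbb{P}$ is closed and defined on all of $\mathcal{X}$, hence \emph{bounded} by the closed graph theorem. This lets one write the tested identity as $(u_\ast,\Lambda^\ast v)_{\mathcal{X}}=\bigl(-\beta_\ast u_\ast+(\Lambda^\ast\mathbb{P})^\ast u_\ast,v\bigr)_{\mathcal{X}}$ for $v\in D_{\mathcal{X}}(A)$ and then, by density of $D_{\mathcal{X}}(A)$ in $\mathcal{X}$, extend it to all $v\in D_{\mathcal{X}}(\Lambda^\ast)$; that extended statement is precisely $u_\ast\in D_{\mathcal{X}}(\Lambda^{\ast\ast})=D_{\mathcal{X}}(\Lambda)$ together with $\Lambda u_\ast=-\beta_\ast u_\ast+\mathbb{P}\Lambda u_\ast$, using that $\Lambda$ is closed and densely defined so that $\Lambda^{\ast\ast}=\Lambda$. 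Without the boundedness of $\Lambda^\ast\mathbb{P}$ you cannot leave the class of test functions $D_{\mathcal{X}}(A)\cap\mathcal{Y}$, and the eigenvalue relation remains merely formal, so both applications of (d) and (e) would be unjustified.
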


\begin{proof}
  The second statement follows from \eqref{E:Phi_Conv} since
  \begin{align*}
    \sup_{t\geq\tau}\|\mathbb{Q}e^{tL_\alpha}\|_{\mathcal{X}\to\mathcal{X}}\leq C_1e^{-C_2\tau/\Phi_{\mathcal{Y}}(-\mathbb{Q}L_\alpha)}, \quad \tau>0
  \end{align*}
  by \eqref{E:Semi_BoAl}. The proof of \eqref{E:Phi_Conv} is the same as Step 3 of the proof of \cite[Theorem 2.4]{IbMaMa19}, but here we give it for the completeness since the notations and assumptions given in this paper are slightly different from those given in \cite{IbMaMa19}.

  Assume to the contrary that \eqref{E:Phi_Conv} does not hold, i.e. there exist a constant $\delta>0$ and sequences $\{\alpha_n\}_{n=1}^\infty$ and $\{\lambda_n\}_{n=1}^\infty$ in $\mathbb{R}$ and $\{f_n\}_{n=1}^\infty$ in $\mathcal{Y}$ such that
  \begin{align*}
    \lim_{n\to\infty}|\alpha_n| = \infty, \quad \|f_n\|_{\mathcal{X}} = 1, \quad \|(i\lambda_n-\mathbb{Q}L_{\alpha_n})^{-1}f_n\|_{\mathcal{X}} \geq \delta \quad\text{for all}\quad n\in\mathbb{N}.
  \end{align*}
  Let $u_n=(i\lambda_n-\mathbb{Q}L_{\alpha_n})^{-1}f_n \in D_{\mathcal{Y}}(\mathbb{Q}L_{\alpha_n})=D_{\mathcal{Y}}(\mathbb{Q}A)$. Then $\|u_n\|_{\mathcal{X}}\geq\delta$ and
  \begin{align} \label{Pf_PC:un_Eq}
    i\lambda_nu_n-Au_n+i\alpha_n\mathbb{Q}\Lambda u_n = f_n
  \end{align}
  by $(i\lambda_n-\mathbb{Q}L_{\alpha_n})u_n=f_n$ and $\mathbb{Q}Au_n=A\mathbb{Q}u_n=Au_n$, and thus
  \begin{align*}
    i\lambda_n(u_n,B_2u_n)_{\mathcal{X}}+(-Au_n,B_2u_n)_{\mathcal{X}}+i\alpha_n(\mathbb{Q}\Lambda u_n,B_2u_n)_{\mathcal{X}} = (f_n,B_2u_n)_{\mathcal{X}}.
  \end{align*}
  We take the real part of this equality and use
  \begin{align*}
    \mathrm{Im}(u_n,B_2u_n)_{\mathcal{X}} = 0, \quad \mathrm{Im}(\mathbb{Q}\Lambda u_n,B_2u_n)_{\mathcal{X}} = \mathrm{Im}(\Lambda u_n,\mathbb{Q}B_2u_n)_{\mathcal{X}} = 0
  \end{align*}
  by the self-adjointness of $B_2$ in $\mathcal{X}$ and \eqref{E:QB2} to get $\mathrm{Re}(-Au_n,B_2u_n)_{\mathcal{X}}=\mathrm{Re}(f_n,B_2u_n)$. By this equality, \eqref{E:Ab_A_Po}, \eqref{E:AB2u}, the boundedness of $B_2$ in $\mathcal{X}$, and $\|f_n\|_{\mathcal{X}}=1$, we have
  \begin{align*}
    \|(-A)^{1/2}u_n\|_{\mathcal{X}}^2 &\leq C\,\mathrm{Re}(-Au_n,B_2u_n)_{\mathcal{X}} \leq C\|f_n\|_{\mathcal{X}}\|B_2u_n\|_{\mathcal{X}} \\
    &\leq C\|u_n\|_{\mathcal{X}} \leq C(-Au_n,u_n)_{\mathcal{X}}^{1/2} = C\|(-A)^{1/2}u_n\|_{\mathcal{X}}
  \end{align*}
  and thus, by \eqref{E:Ab_A_Po} and the above inequality,
  \begin{align} \label{Pf_PC:un_Bd}
    \|u_n\|_{\mathcal{X}} \leq C(-Au_n,u_n)_{\mathcal{X}}^{1/2} = C\|(-A)^{1/2}u_n\|_{\mathcal{X}} \leq C.
  \end{align}
  Here and in what follows, $C$ denotes a general positive constant independent of $n$. Now we observe that $(-A)^{1/2}$ is strictly positive, self-adjoint, and with compact resolvent in $\mathcal{X}$ since $-A$ has the same properties by Assumption \ref{As:A} and the condition (a) (see \cite[Theorems V.3.35 and V.3.49]{Kato76}). Thus $(-A)^{-1/2}$ exists and is compact in $\mathcal{X}$. Moreover, since $u_n=(-A)^{-1/2}(-A)^{1/2}u_n$ and $\{(-A)^{1/2}u_n\}_{n=1}^\infty$ is bounded in $\mathcal{X}$ by \eqref{Pf_PC:un_Bd}, we see that $\{u_n\}_{n=1}^\infty$ converges (up to a subsequence) to some $u_\infty$ strongly in $\mathcal{X}$. Then $u_\infty\in\mathcal{Y}$ since $\mathcal{Y}$ is closed in $\mathcal{X}$ and $u_n\in\mathcal{Y}$ for all $n\in\mathbb{N}$. Also, since $\|u_n\|_{\mathcal{X}}\geq\delta$ for all $n\in\mathbb{N}$,
  \begin{align} \label{Pf_PC:uin_del}
    \|u_\infty\|_{\mathcal{X}} = \lim_{n\to\infty}\|u_n\|_{\mathcal{X}} \geq \delta.
  \end{align}
  Let us show $u_\infty=0$. Since $|\alpha_n|\to\infty$ as $n\to\infty$, we may assume $|\alpha_n|\geq1$ for all $n\in\mathbb{N}$. We set $\mu_n=\lambda_n/\alpha_n\in\mathbb{R}$ and divide both sides of \eqref{Pf_PC:un_Eq} by $\alpha_n$ to get
  \begin{align} \label{Pf_PC:un_mu}
    i\mu_nu_n-\frac{1}{\alpha_n}Au_n+i\mathbb{Q}\Lambda u_n = \frac{1}{\alpha_n}f_n.
  \end{align}
  Then, taking the imaginary part of the inner product of \eqref{Pf_PC:un_mu} with $u_n$ and noting that $A$ is self-adjoint in $\mathcal{X}$ and $\mathbb{Q}u_n=u_n$, we find that
  \begin{align*}
    \mu_n\|u_n\|_{\mathcal{X}}^2+\mathrm{Re}(\Lambda u_n,u_n)_{\mathcal{X}} = \frac{1}{\alpha_n}\mathrm{Im}(f_n,u_n)_{\mathcal{X}}
  \end{align*}
  and thus, by the condition (b), $\|f_n\|_{\mathcal{X}}=1$, \eqref{Pf_PC:un_Bd}, and $|\alpha_n|\geq1$,
  \begin{align*}
    |\mu_n| \, \|u_n\|_{\mathcal{X}}^2 &\leq |(\Lambda u_n,u_n)_{\mathcal{X}}|+|(f_n,u_n)_{\mathcal{X}}| \leq C(-Au_n,u_n)_{\mathcal{X}}+\|f_n\|_{\mathcal{X}}\|u_n\|_{\mathcal{X}} \leq C.
  \end{align*}
  Since $\|u_n\|_{\mathcal{X}}\geq\delta$ for all $n\in\mathbb{N}$, we see by the above inequality that $\{\mu_n\}_{n=1}^\infty$ is bounded and thus converges (up to a subsequence) to some $\mu_\infty\in\mathbb{R}$. Now we show that
  \begin{align} \label{Pf_PC:uin_Lam}
    u_\infty \in D_{\mathcal{X}}(\Lambda), \quad \Lambda u_\infty = -\mu_\infty u_\infty+\mathbb{P}\Lambda u_\infty,
  \end{align}
  where $\mathbb{P}=I-\mathbb{Q}$ is the orthogonal projection from $\mathcal{X}$ onto $N_{\mathcal{X}}(\Lambda)$. Let $v\in D_{\mathcal{X}}(A)$. Then $v,\mathbb{P}v\in D_{\mathcal{X}}(\Lambda^\ast)$ by the condition (c) and thus $\mathbb{Q}v=v-\mathbb{P}v\in D_{\mathcal{X}}(\Lambda^\ast)$. We take the inner product of \eqref{Pf_PC:un_mu} with $v$ and use the self-adjointness of $A$ in $\mathcal{X}$ to get
  \begin{align*}
    i\mu_n(u_n,v)_{\mathcal{X}}-\frac{1}{\alpha_n}(u_n,Av)_{\mathcal{X}}+i(u_n,\Lambda^\ast\mathbb{Q}v)_{\mathcal{X}} = \frac{1}{\alpha_n}(f_n,u_n)_{\mathcal{X}}.
  \end{align*}
  Let $n\to\infty$ in this equality. Then since $\|f_n\|_{\mathcal{X}}=1$ for all $n\in\mathbb{N}$, $u_n\to u_\infty$ strongly in $\mathcal{X}$, $\mu_n\to\mu_\infty$, and $|\alpha_n|\to\infty$ as $n\to\infty$, we have $i\mu_\infty(u_\infty,v)_{\mathcal{X}}+i(u_\infty,\Lambda^\ast\mathbb{Q}v)_{\mathcal{X}}=0$. By this equality and $\mathbb{Q}v=v-\mathbb{P}v$ we obtain
  \begin{align} \label{Pf_PC:uin_Last}
    (u_\infty,\Lambda^\ast v)_{\mathcal{X}} = -\mu_\infty(u_\infty,v)_{\mathcal{X}}+(u_\infty,\Lambda^\ast\mathbb{P}v)_{\mathcal{X}}
  \end{align}
  for all $v\in D_{\mathcal{X}}(A)$. Moreover, we see that $\Lambda^\ast\mathbb{P}$ is a closed operator on $\mathcal{X}$ with domain $D_{\mathcal{X}}(\Lambda^\ast\mathbb{P})=\mathcal{X}$, since $D_{\mathcal{X}}(\mathbb{P})=\mathcal{X}$ and $R_{\mathcal{X}}(\mathbb{P})=N_{\mathcal{X}}(\Lambda)\subset D_{\mathcal{X}}(\Lambda^\ast)$ by the condition (c). Hence $\Lambda^\ast\mathbb{P}$ is bounded on $\mathcal{X}$ by the closed graph theorem. By this fact and the density of $D_{\mathcal{X}}(A)$ in $\mathcal{X}$, we find that \eqref{Pf_PC:uin_Last} holds for all $v\in D_{\mathcal{X}}(\Lambda^\ast)$. This shows that \eqref{Pf_PC:uin_Lam} is valid since $\Lambda^{\ast\ast}=\Lambda$ in $\mathcal{X}$. Now we observe by \eqref{Pf_PC:uin_Lam} and $\mathbb{P}\Lambda u_\infty\in N_{\mathcal{X}}(\Lambda)$ that $\Lambda u_\infty \in D_{\mathcal{X}}(\Lambda)$ and $\Lambda^2u_\infty = -\mu_\infty\Lambda u_\infty$. Moreover, $\Lambda u_\infty\neq0$ since $u_\infty\in\mathcal{Y}=N_{\mathcal{X}}(\Lambda)^\perp$ and $u_\infty\neq0$ by \eqref{Pf_PC:uin_del}. Thus $-\mu_\infty\in\mathbb{R}$ is an eigenvalue of $\Lambda$, which yields $\mu_\infty=0$ by the condition (e). Hence $\Lambda^2u_\infty=0$, i.e. $\Lambda u_\infty\in N_{\mathcal{X}}(\Lambda)\cap R_{\mathcal{X}}(\Lambda)$. By this fact and the condition (d) we have $\Lambda u_\infty=0$, i.e. $u_\infty\in N_{\mathcal{X}}(\Lambda)$. However, since $u_\infty$ belongs to $\mathcal{Y}=N_{\mathcal{X}}(\Lambda)^\perp$, it follows that $u_\infty=0$, which contradicts \eqref{Pf_PC:uin_del}. Hence \eqref{E:Phi_Conv} is valid.
\end{proof}

\section{Appendix: Derivation of the vorticity and linearized equations} \label{S:DeVo}

In this section we derive the vorticity equation \eqref{E:Vo_Intro} form the Navier--Stokes equations \eqref{E:NS_Intro}. We also linearize \eqref{E:Vo_Intro} around the $n$-jet Kolmogorov type flow \eqref{E:Zna_Intro}. For the sake of simplicity, we assume that functions are sufficiently smooth in this section. We also note that here we only consider real-valued functions.

Let us introduce notations and formulas from differential geometry. For details, we refer to \cite{Tay11_1,Lee13,Lee18}. We use the spherical coordinate system \eqref{E:SpCo_Intro} so that
\begin{align*}
  \partial_\theta\mathbf{x} =
  \begin{pmatrix}
    \cos\theta\cos\varphi \\
    \cos\theta\sin\varphi \\
    -\sin\theta
  \end{pmatrix},
  \quad \partial_\varphi\mathbf{x} =
  \begin{pmatrix}
    -\sin\theta\sin\varphi \\
    \sin\theta\cos\varphi \\
    0
  \end{pmatrix},
  \quad \mathbf{n}_{S^2} =
  \begin{pmatrix}
    \sin\theta\cos\varphi \\
    \sin\theta\sin\varphi \\
    \cos\theta
  \end{pmatrix}.
\end{align*}
Let $\mathfrak{X}(S^2)$ and $\Omega^k(S^2)$, $k=0,1,2$ be the space of vector fields and $k$-forms on $S^2$. By $\langle\cdot,\cdot\rangle$ we denote the inner product on $\mathfrak{X}(S^2)$ induced by the inner product in $\mathbb{R}^3$. We use the same notation $\langle\cdot,\cdot\rangle$ for the inner product on $\Omega^k(S^2)$. For $\mathbf{u}\in\mathfrak{X}(S^2)$ and $\eta\in\Omega^1(S^2)$ we define $\mathbf{u}^\flat\in\Omega^1(S^2)$ and $\eta^\sharp\in\mathfrak{X}(S^2)$ by $\mathbf{u}^\flat(\mathbf{v})=\langle\mathbf{u},\mathbf{v}\rangle$ and $\langle\eta^\sharp,\mathbf{v}\rangle=\eta(\mathbf{v})$ for $\mathbf{v}\in\mathfrak{X}(S^2)$. Let $\nabla$ be the Levi-Civita connection on $S^2$ and $d$ the exterior derivative on $S^2$. Also, let $\mathfrak{L}_{\mathbf{u}}$ be the Lie derivative along $\mathbf{u}\in\mathfrak{X}(S^2)$.
Then
\begin{align} \label{E:CoLi_Fun}
  \nabla_{\mathbf{u}}\psi = \mathfrak{L}_{\mathbf{u}}\psi = d\psi(\mathbf{u}) = \langle\mathbf{u},\nabla \psi\rangle
\end{align}
for a function $\psi\in\Omega^0(S^2)$, where $\nabla \psi=(d\psi)^\sharp$ is the gradient of $\psi$, and
\begin{align} \label{E:CoLi_One}
  (D_{\mathbf{u}}\eta)(\mathbf{v}) = D_{\mathbf{u}}\bigl(\eta(\mathbf{v})\bigr)-\eta(D_{\mathbf{u}}\mathbf{v}), \quad D_{\mathbf{u}} = \nabla_{\mathbf{u}},\mathfrak{L}_{\mathbf{u}}, \quad \mathbf{v}\in\mathfrak{X}(S^2)
\end{align}
for a one-form $\eta\in\Omega^1(S^2)$. Moreover, since
\begin{align} \label{E:LeCi}
  \nabla_{\mathbf{u}}\langle\mathbf{v},\mathbf{w}\rangle = \langle\nabla_{\mathbf{u}}\mathbf{v},\mathbf{w}\rangle+\langle\mathbf{v},\nabla_{\mathbf{u}}\mathbf{w}\rangle, \quad \mathfrak{L}_{\mathbf{u}}\mathbf{v} = \nabla_{\mathbf{u}}\mathbf{v}-\nabla_{\mathbf{v}}\mathbf{u}
\end{align}
for $\mathbf{u},\mathbf{v},\mathbf{w}\in\mathfrak{X}(S^2)$, it follows from \eqref{E:CoLi_Fun}--\eqref{E:LeCi} that
\begin{align} \label{E:CoLi_Ex}
  (\nabla_{\mathbf{u}}\mathbf{v})^\flat = \nabla_{\mathbf{u}}\mathbf{v}^\flat, \quad \nabla_{\mathbf{u}}\mathbf{v}^\flat+\nabla_{\mathbf{v}}\mathbf{u}^\flat = \mathfrak{L}_{\mathbf{u}}\mathbf{v}^\flat+\mathfrak{L}_{\mathbf{v}}\mathbf{u}^\flat-d\langle\mathbf{u},\mathbf{v}\rangle.
\end{align}
Let $d\mathcal{H}^2$ be the volume form of $S^2$ and $\mathrm{div}\,\mathbf{u}$ the divergence of $\mathbf{u}\in\mathfrak{X}(S^2)$. It is known that, for $\mathbf{u}\in\mathfrak{X}(S^2)$ and $\eta\in\Omega^k(S^2)$, $k=0,1$,
\begin{align} \label{E:LiExt}
  d(\mathfrak{L}_{\mathbf{u}}\eta) = \mathfrak{L}_{\mathbf{u}}(d\eta), \quad \mathfrak{L}_{\mathbf{u}}(d\mathcal{H}^2) = (\mathrm{div}\,\mathbf{u})\,d\mathcal{H}^2.
\end{align}
Let $\ast$ be the Hodge star operator. For $\mathbf{u}\in\mathfrak{X}(S^2)$ we define $\mathrm{rot}\,\mathbf{u}=\ast d\mathbf{u}^\flat\in\Omega^0(S^2)$. It is known that $\ast \psi=\psi\,d\mathcal{H}^2$ for $\psi\in\Omega^0(S^2)$ and $\ast^2=(-1)^{k(2-k)}$ on $\Omega^k(S^2)$, $k=0,1,2$. Also, we easily find that $\ast\mathbf{u}^\flat=(\mathbf{n}_{S^2}\times\mathbf{u})^\flat$ for $\mathbf{u}\in\mathfrak{X}(S^2)$ by using the spherical coordinate system \eqref{E:SpCo_Intro}. Let $d^\ast$ be the formal adjoint of $d$ and $\Delta_H=-(dd^\ast+d^\ast d)$ the Hodge Laplacian on $\Omega^1(S^2)$. By abuse of notation, we write $\Delta_H\mathbf{u}$ for $(\Delta_H\mathbf{u}^\flat)^\sharp$ when $\mathbf{u}\in\mathfrak{X}(S^2)$. Since $S^2$ is a 2D manifold, we have $d^\ast=-\ast d\ast$. Also, for $\mathbf{u}\in\mathfrak{X}(S^2)$ and $\psi\in\Omega^0(S^2)$,
\begin{align} \label{E:dast_Lap}
  d^\ast\mathbf{u} = -\mathrm{div}\,\mathbf{u}, \quad d^\ast d\psi = -\mathrm{div}(\nabla\psi) = -\Delta\psi,
\end{align}
where $\Delta$ is the Laplace--Beltrami operator on $S^2$.

Now let us derive \eqref{E:Vo_Intro}. Let $\mathbf{u}\in\mathfrak{X}(S^2)$ and $p\in\Omega^0(S^2)$ satisfy \eqref{E:NS_Intro} with external force $\mathbf{f}\in\mathfrak{X}(S^2)$. Using \eqref{E:CoLi_Ex} and \eqref{E:dast_Lap}, we rewrite \eqref{E:NS_Intro} as
\begin{align} \label{E:NS_OneF}
  \partial_t\mathbf{u}^\flat+\mathfrak{L}_{\mathbf{u}}\mathbf{u}^\flat-\nu(\Delta_H\mathbf{u}^\flat+2\mathbf{u}^\flat)+d\left(-\frac{|\mathbf{u}|^2}{2}+p\right) = \mathbf{f}^\flat, \quad d^\ast\mathbf{u}^\flat = 0.
\end{align}
Let $\omega=\mathrm{rot}\,\mathbf{u}=\ast d\mathbf{u}^\flat$. We apply $\ast d$ to \eqref{E:NS_OneF}. Then since $d\mathbf{u}^\flat=\omega\,d\mathcal{H}^2$,
\begin{align*}
  \ast d\mathfrak{L}_{\mathbf{u}}\mathbf{u}^\flat = \ast\mathfrak{L}_{\mathbf{u}}(d\mathbf{u}^\flat) = \ast\{(\mathfrak{L}_{\mathbf{u}}\omega)\,d\mathcal{H}^2+\omega\mathfrak{L}_{\mathbf{u}}(d\mathcal{H}^2)\} = \ast\{(\nabla_{\mathbf{u}}\omega)\,d\mathcal{H}^2\} = \nabla_{\mathbf{u}}\omega
\end{align*}
by \eqref{E:CoLi_Fun}, \eqref{E:LiExt}, and $\mathrm{div}\,\mathbf{u}=0$. Also, by $d^2=0$, $d^\ast=-\ast d\ast$, and \eqref{E:dast_Lap},
\begin{align*}
  \ast d\Delta_H\mathbf{u}^\flat = -\ast dd^\ast d\mathbf{u}^\flat= \ast d\ast d\ast d\mathbf{u}^\flat = -d^\ast d\omega = \Delta\omega.
\end{align*}
By these formulas and $d^2=0$ we get the first equation of \eqref{E:Vo_Intro}. It remains to express $\mathbf{u}$ by $\omega$. By $\ast^2=1$ on $\Omega^2(S^2)$, $d^\ast=-\ast d\ast$, and $d^\ast\mathbf{u}^\flat=0$, we have $d\ast\mathbf{u}^\flat=0$. Hence $\ast\mathbf{u}^\flat$ is a closed one-form and, since $S^2$ is simply connected, it is exact, i.e. there exists $\psi\in\Omega^0(S^2)$ such that $\ast\mathbf{u}^\flat=-d\psi$. We may assume $\int_{S^2}\psi\,d\mathcal{H}^2=0$ by subtracting a constant. Then since $\mathbf{u}^\flat=\ast d\psi$ and $\omega=\ast d\mathbf{u}^\flat=\Delta\psi$ by $\ast^2=-1$ on $\Omega^1(S^2)$, $d^\ast=-\ast d\ast$, and \eqref{E:dast_Lap}, and since $\int_{S^2}\omega\,d\mathcal{H}^2=\int_{S^2}d\mathbf{u}^\flat=0$ by the Stokes theorem, we have
\begin{align*}
  \omega = \Delta^{-1}\psi, \quad \mathbf{u}^\flat = \ast d\psi = \ast d\Delta^{-1}\omega = \ast(\nabla\Delta^{-1}\omega)^\flat = (\mathbf{n}_{S^2}\times\nabla\Delta^{-1}\omega)^\flat.
\end{align*}
Hence we obtain \eqref{E:Vo_Intro} from \eqref{E:NS_Intro}. Conversely, if $\omega\in\Omega^0(S^2)$ with zero mean satisfies \eqref{E:Vo_Intro}, then $\mathbf{u}^\flat=\ast d\Delta^{-1}\omega$ satisfies $d^\ast\mathbf{u}^\flat=0$ and $d(\partial_t\mathbf{u}^\flat+\mathfrak{L}_{\mathbf{u}}\mathbf{u}^\flat-\nu(\Delta_H\mathbf{u}^\flat+2\mathbf{u}^\flat)-\mathbf{f}^\flat)=0$ by the above formulas. Then since $S^2$ is simply connected, there exists $q\in\Omega^0(S^2)$ such that $\mathbf{u}^\flat$ and $q$ satisfy \eqref{E:NS_OneF} with $-|\mathbf{u}|^2/2+p$ replaced by $q$. Hence, setting $p=q+|\mathbf{u}|^2/2$, we find by \eqref{E:CoLi_Ex} and \eqref{E:dast_Lap} that $\mathbf{u}$ and $p$ satisfy \eqref{E:NS_Intro}.

Next we consider the function $\omega_n^a$ of the form \eqref{E:Zna_Intro}. Since
\begin{align*}
  \Delta\omega_n^a = -\lambda_n\omega_n^a, \quad \nabla\omega_n^a = a\frac{dY_n^0}{d\theta}\partial_\theta\mathbf{x}, \quad \mathbf{n}_{S^2}\times\partial_\theta\mathbf{x} = \frac{1}{\sin\theta}\partial_\varphi\mathbf{x},
\end{align*}
the corresponding velocity field is of the form
\begin{align} \label{E:una_form}
  \mathbf{u}_n^a = \mathbf{n}_{S^2}\times\nabla\Delta^{-1}\omega_n^a = -\frac{1}{\lambda_n}\mathbf{n}_{S^2}\times\nabla\omega_n^a = -\frac{a}{\lambda_n\sin\theta}\frac{dY_n^a}{d\theta}\partial_\varphi\mathbf{x}.
\end{align}
Hence $\nabla_{\mathbf{u}_n^a}\omega_n^a=-\lambda_n^{-1}\langle\mathbf{n}_{S^2}\times\nabla\omega_n^a,\nabla\omega_n^a\rangle=0$ and $\omega_n^a$ is a stationary solution of \eqref{E:Vo_Intro} with external force $\mathrm{rot}\,\mathbf{f}_n^a=\nu(\lambda_n-2)Y_n^0$. Let us linearize \eqref{E:Vo_Intro} around $\omega_n^a$. We substitute $\omega=\omega_n^a+\tilde{\omega}_n$ for \eqref{E:Vo_Intro} and omit the nonlinear term with respect to $\tilde{\omega}_n$ to get
\begin{align*}
  \partial_t\tilde{\omega}_n = \nu(\Delta\tilde{\omega}_n+2\tilde{\omega}_n)-\nabla_{\mathbf{u}_n^a}\tilde{\omega}_n-\nabla_{\tilde{\mathbf{u}}_n}\omega_n^a, \quad \tilde{\mathbf{u}}_n = \mathbf{n}_{S^2}\times\nabla\Delta^{-1}\tilde{\omega}_n.
\end{align*}
Here the last two terms are of the form
\begin{align*}
  \nabla_{\mathbf{u}_n^a}\tilde{\omega} &= -\frac{a}{\lambda_n\sin\theta}\frac{dY_n^0}{d\theta}\partial_\varphi\tilde{\omega}_n, \\
  \nabla_{\tilde{\mathbf{u}}_n}\omega_n^a &= -\langle\nabla\Delta^{-1}\tilde{\omega},\mathbf{n}_{S^2}\times\nabla\omega_n^a\rangle = -\frac{a}{\sin\theta}\frac{dY_n^a}{d\theta}\partial_\varphi\Delta^{-1}\tilde{\omega}_n
\end{align*}
by \eqref{E:CoLi_Fun}, \eqref{E:una_form}, $\langle\partial_\varphi\mathbf{x},\nabla f\rangle=\partial_\varphi f$ for $f\in\Omega^0(S^2)$, and $\langle\mathbf{n}_{S^2}\times\mathbf{u},\mathbf{v}\rangle=-\langle\mathbf{u},\mathbf{n}_{S^2}\times\mathbf{v}\rangle$ for $\mathbf{u},\mathbf{v}\in\mathfrak{X}(S^2)$. Moreover, by \eqref{E:SpHa} with $m=0$,
\begin{align*}
  \frac{dY_n^0}{d\theta}(\theta) = -\sqrt{\frac{2n+1}{4\pi}}\,\sin\theta\,P_n'(\cos\theta), \quad P_n'(s) = \frac{1}{2^nn!}\frac{d^{n+1}}{ds^{n+1}}(s^2-1)^n.
\end{align*}
Hence the linearized equation for \eqref{E:Vo_Intro} around $\omega_n^a$ is
\begin{align*}
  \partial_t\tilde{\omega}_n = \nu(\Delta\tilde{\omega}_n+2\tilde{\omega}_n)-\frac{a}{\lambda_n}\sqrt{\frac{2n+1}{4\pi}}\,P_n'(\cos\theta)\partial_\varphi(I+\lambda_n\Delta^{-1})\tilde{\omega}_n
\end{align*}
and we get \eqref{E:Li1_Eq} and \eqref{E:Li2_til} when $n=1,2$ by $P_1'(s)=1$, $P_2'(s)=3s$, and $\lambda_n=n(n+1)$.

\section*{Acknowledgments}

The author would like to thank Professor Yasunori Maekawa for fruitful discussions and valuable comments on this work. The work of the author was supported by Grant-in-Aid for JSPS Fellows No. 19J00693.

\bibliographystyle{abbrv}
\bibliography{NSK_LS_Ref}

\end{document}